\def\jobis#1{FF\fi
  \def\predicate{#1}%
  \edef\predicate{\expandafter\strip@prefix\meaning\predicate}%
  \edef\job{\jobname}%
  \ifx\job\predicate
}
\if\jobis{proposal}%
\DeclareMathOperator{\Supp}{Supp}
\DeclareMathOperator{\num}{num}
\DeclareMathOperator{\NS_R}{\rm NS_{\R}}
 \newcommand{\N}{\mathbb N}
 \newcommand{\Q}{\mathbb Q}
 \newcommand{\R}{\mathbb R}
 \newcommand{\Z}{\mathbb Z}
 \newcommand{\bir}{\dashrightarrow}
 \newcommand{\rddown}[1]{\left\lfloor{#1}\right\rfloor} % round-down
 \newcommand{\wt}{\widetilde}
 \newcommand{\wh}{\widehat}
 \newcommand{\cI}{\mathcal I}
 \numberwithin{equation}{subsection}
 \numberwithin{footnote}{subsection}
 \newtheorem{cor}[subsection]{Corollary}
 \newtheorem{lem}[subsection]{Lemma}
 \newtheorem{prop}[subsection]{Proposition}
 \newtheorem{thm}[subsection]{Theorem}
\newtheorem{claim}[subsection]{Claim}
{
 \newtheorem{defn}[subsection]{Definition}

 \newtheorem{rem}[subsection]{Remark}

}
 \newenvironment{pfof}[1]{\paragraph{Proof of #1}}{\par\medskip}
 \newcommand{\ke}[1]{$\acute{\mbox{e}}$}
 \newcommand{\ku}[1]{$\acute{\mbox{u}$}}
 \newcommand{\kl}[1]{$\acute{\mbox{l}}$}
 \newcommand{\kh}[1]{$\acute{\mbox{h}}$}
 \newcommand{\kr}[1]{$\acute{\mbox{r}}$}
 \newcommand{\kx}[1]{$\acute{\mbox{x}}$}
 \newcommand{\ki}[1]{${\^\i}$}
\title[Minimal models, flips...]{Minimal models, flips and \\ finite generation : \\ a tribute to 
V.V. SHOKUROV and Y.-T. SIU}
\author{Caucher Birkar \hspace{2cm}  Mihai P\u{a}un}
\date{\today}
\begin{document}
\maketitle %{Minimal models,  flips ...}

\begin{abstract}
In this paper, we discuss a proof of existence of log 
minimal models or Mori fibre spaces for klt pairs $(X/Z,B)$ with $B$ big$/Z$. This then implies existence of klt log flips, 
finite generation of klt log canonical rings, and most of the other results of 
 Birkar-Cascini-Hacon-McKernan paper [\ref{BCHM}].  
\end{abstract}

%\tableofcontents
%%%%%%%%%%%%%%%%%%%%%%%%

\section{Introduction}

We consider pairs $(X/Z,B)$ where $B$ is an $\R$-boundary and $X\to Z$ is a projective morphism of normal 
quasi-projective varieties over an algebraically closed field $k$ of characteristic zero. We call a pair $(X/Z,B)$ 
effective if there is an $\R$-divisor $M\ge 0$ such that $K_X+B\equiv M/Z$.

\begin{thm}\label{main}
Let $(X/Z,B)$ be a klt pair where $B$ is big$/Z$. 
Then,\\\\ 
$\rm (1)$ if $K_X+B$ is pseudo-effective$/Z$, then $(X/Z,B)$ has a log minimal model,\\
$\rm (2)$ if $K_X+B$ is not pseudo-effective$/Z$, then $(X/Z,B)$ has a Mori fibre space.
\end{thm}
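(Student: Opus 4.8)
The plan is to prove Theorem~\ref{main} by induction on $d=\dim X$, carrying along in the induction a non-vanishing statement and the termination results needed to run and halt the minimal model program, so that in all dimensions $<d$ these are available.

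First I settle (2) granting (1). If $K_X+B$ is not pseudo-effective$/Z$, pick an ample$/Z$ $\R$-divisor $H\ge 0$ with $K_X+B+H$ nef$/Z$ and run the $(K_X+B)$-MMP with scaling of $H$. Each step is a divisorial contraction or a log flip; the log flips exist because general flips reduce to pl flips in Shokurov's sense, and pl flips are produced from the Hacon--McKernan extension theorem --- equivalently, from the finite generation of the restricted log canonical algebra, in the spirit of Siu's analytic methods. Since $K_X+B$ is not pseudo-effective$/Z$, no model reached by the program carries $K_X+B$ nef$/Z$, so it must end with a Mori fibre contraction; that it ends at all is reduced to the induction hypothesis via Shokurov's special termination, as after finitely many steps the program no longer meets the exceptional locus and there lower-dimensional MMP takes over. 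This yields the Mori fibre space.

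For (1), assume $K_X+B$ is pseudo-effective$/Z$. Since $B$ is big$/Z$, write $B\sim_\R A+C/Z$ with $A\ge 0$ ample$/Z$ and $C\ge 0$, and after a small perturbation assume $\lfloor B\rfloor=0$ with $A$ a general ample summand of $B$. The crucial reduction is non-vanishing: the pair $(X/Z,B)$ is effective, i.e. $K_X+B\equiv M/Z$ for some $\R$-divisor $M\ge 0$. I would establish this in dimension $d$ from the induction hypothesis, by passing to a suitable divisor that absorbs the positivity carried by $A$ and then lifting pluricanonical sections back via the extension theorem. Granting such an $M$, perturb $B$ so that $\Supp B$ and $\Supp M$ lie in suitably general position, and run a $(K_X+B)$-MMP with scaling of an ample$/Z$ divisor: all its flips and contractions exist as above, and special termination together with the existence of log minimal models in dimension $<d$ --- which supplies the finiteness of models needed to halt the scaling --- shows the program terminates. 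Its final model is nef$/Z$, hence a log minimal model of $(X/Z,B)$, closing the induction.

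The principal obstacle is exactly what the title pays tribute to: the existence of pl flips, that is, finite generation of the algebra obtained by restricting the log canonical ring of $(X/Z,B)$ to a pl divisor. This is where Siu's analytic extension technique, refined by Hacon and McKernan into a sharp extension theorem with multiplier ideals, does the decisive work; the rest is an intricate but formal package of MMP bookkeeping. The subtler secondary point is to phrase the non-vanishing statement and the termination-with-scaling so that the dimension induction genuinely closes --- in particular one must control Shokurov-type polytopes of boundary divisors to obtain finiteness of log minimal models, since the scaling coefficients are only known to converge, not to stabilize.
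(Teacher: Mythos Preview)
Your overall architecture --- induction on dimension, nonvanishing, MMP with scaling, special termination, finiteness of models --- matches the paper's, but there is a genuine gap in how you terminate the program. Special termination is a statement about the locus near $\lfloor B\rfloor$: after finitely many flips the flipping locus avoids $\lfloor B\rfloor$. For a klt pair $(X/Z,B)$ one has $\lfloor B\rfloor=0$, so special termination is vacuous and cannot by itself halt the LMMP in either part (1) or part (2). Your sentence ``after finitely many steps the program no longer meets the exceptional locus and there lower-dimensional MMP takes over'' presupposes an lc centre that simply is not there.

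The paper resolves (1) not by running the MMP on $(X/Z,B)$ directly, but via Theorem~\ref{t-mmodels}: one uses the effective divisor $M\equiv K_X+B/Z$ to pass to auxiliary \emph{dlt} triples $(X/Z,B+C,M+C)$ with $\lfloor B+C\rfloor\neq 0$, governed by the invariant $\theta(X/Z,B,M)$ counting components of $M$ outside $\lfloor B\rfloor$. One decreases $\theta$ by induction; at each stage every extremal ray meets $\lfloor B\rfloor$ negatively, so only pl flips are needed and special termination genuinely applies. This $\theta$-reduction is the missing mechanism in your sketch of (1). For (2), the paper does not attack $(X/Z,B)$ directly either: it first applies (1) \emph{in dimension $d$} to $(X/Z,B+tA)$ at the pseudo-effective threshold $t$, obtains a log minimal model, and then runs the LMMP with scaling of $tA_Y$; termination now comes from finiteness of models in dimension $d$ (Theorems~\ref{t-finiteness} and \ref{t-term}), which is available precisely because (1) in dimension $d$ has just been established. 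Your proposal places finiteness of models in dimension $<d$, where it does not do the work you need; and your reduction ``general flips reduce to pl flips'' again presupposes a component of $\lfloor B\rfloor$ to flip against.
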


\begin{cor}[Log flips]\label{c-flips}
Log flips exist for klt (hence $\Q$-factorial dlt) pairs.
\end{cor}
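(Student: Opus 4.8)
The plan is to deduce the existence of flips from Theorem \ref{main} by the now‑standard reduction. Since the existence of flips is local on the base, I would first pass to the case where $Z$ is affine, and reduce to the following situation: $f\colon X\to Z$ is a $(K_X+\Delta)$‑flipping contraction of a $\Q$‑factorial klt pair $(X,\Delta)$, so that $f$ is small projective, $\rho(X/Z)=1$, and $-(K_X+\Delta)$ is $f$‑ample; the goal is to produce the flip, that is, a small projective $f^+\colon X^+\to Z$ with $X^+\bir X$ an isomorphism in codimension one, $\Delta^+$ the strict transform of $\Delta$, and $K_{X^+}+\Delta^+$ ample$/Z$. The first step is to replace $(X,\Delta)$ by a pair with big boundary in the same direction. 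Because $-(K_X+\Delta)$ is ample$/Z$ and $Z$ is affine, I can choose a general $A\ge 0$ with $A\sim_{\R,Z}-(K_X+\Delta)$ and $(X,\Delta+A)$ klt (take $A$ a sum of small multiples of general members of very ample$/Z$ linear systems, so that the klt condition is preserved). Fixing a rational number $0<\varepsilon<1$ and setting $B:=\Delta+\varepsilon A$, I obtain a klt pair $(X,B)$ with $B$ big$/Z$ and $K_X+B\sim_{\R,Z}(1-\varepsilon)(K_X+\Delta)$, so that $-(K_X+B)$ is still $f$‑ample. Since $\rho(X/Z)=1$, a small birational modification of $X$ over $Z$ makes $K_X+B$ ample$/Z$ exactly when it makes $K_X+\Delta$ ample$/Z$; hence it suffices to construct the flip of $f$ for the pair $(X,B)$.

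Next I would invoke Theorem \ref{main} on $(X/Z,B)$. First, $K_X+B$ is pseudo‑effective$/Z$: otherwise part $(2)$ would give a Mori fibre space $Y\to T$ over $Z$ with $Y$ birational to $X$ over $Z$, and then $T\to Z$ would be a surjective morphism with $\dim T<\dim Y=\dim X=\dim Z$, which is absurd because $f$ is birational. So part $(1)$ applies and yields a log minimal model $(Y/Z,B_Y)$; write $\phi\colon X\bir Y$ for the associated birational contraction. Since $Y$ is $\Q$‑factorial and $\phi$ is a birational contraction, $\phi$ extracts no divisor and the number of divisors it contracts equals $\rho(X/Z)-\rho(Y/Z)$; as $\rho(X/Z)=1$, either $\rho(Y/Z)=1$ and $\phi$ is an isomorphism in codimension one, or $\rho(Y/Z)=0$. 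The second possibility would force $Y=Z$, which is impossible since a log minimal model is $\Q$‑factorial while the target of a flipping contraction never is. Therefore $\phi$ is an isomorphism in codimension one, $\rho(Y/Z)=1$, and $Y\to Z$ is small.

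Finally, $K_Y+B_Y$ is nef$/Z$, and since $\rho(Y/Z)=1$ it is either ample$/Z$ or numerically trivial over $Z$. I would exclude the latter by the negativity lemma: as $\phi$ is an isomorphism in codimension one and $(Y/Z,B_Y)$ is a log minimal model of $(X/Z,B)$, if $K_Y+B_Y\equiv 0/Z$ then $K_X+B\equiv 0/Z$ as well, contradicting that $-(K_X+B)$ is $f$‑ample. Hence $K_Y+B_Y$ is ample$/Z$, so $Y\to Z$ with $B_Y$ the strict transform of $B$ is the flip of $f$ for $(X,B)$, and consequently $X^+:=Y$ — with $\Delta^+$ the strict transform of $\Delta$, which is also ample$/Z$ by proportionality — is the flip of $f$ for $(X,\Delta)$. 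This settles the klt case; the $\Q$‑factorial dlt case follows from it by the usual perturbation (slightly lowering the coefficients of $\lfloor\Delta\rfloor$ to reach a nearby klt pair with the same flip). I expect the two points needing most care to be the pseudo‑effectivity dichotomy of Theorem \ref{main}, which is what makes the dimension count legitimate, and the verification that the minimal model obtained is a small modification of $X$ rather than one produced with an extra divisorial contraction — this is forced by $\rho(X/Z)=1$ and the smallness of $f$, but it is the crux of the argument.
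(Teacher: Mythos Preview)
Your argument is correct, but it is considerably longer than the paper's and takes a genuinely different route at two points. First, the perturbation $B=\Delta+\varepsilon A$ to make the boundary big$/Z$ is unnecessary: since the flipping contraction $f\colon X\to Z'$ is birational, the generic fibre is a point, so \emph{every} divisor is big$/Z'$ and pseudo-effective$/Z'$; the paper simply applies Theorem~\ref{main}(1) directly to $(X/Z',B)$ with the original boundary. (For the same reason, your Mori-fibre-space dimension count to establish pseudo-effectivity, while correct, is superfluous.) Second, once a log minimal model $(Y/Z',B_Y)$ is in hand, the paper does not try to show that $Y$ itself is the flip via the Picard-number identity $\rho(X/Z')-\rho(Y/Z')=\#\{\text{contracted divisors}\}$ and the negativity lemma; instead it invokes the base point free theorem (available because $B_Y$ is big$/Z'$) to pass to the log canonical model of $(Y/Z',B_Y)$, and that model is the flip by uniqueness of the ample model. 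This bypasses the need for $\Q$-factoriality of $X$ and for the case analysis $\rho(Y/Z')\in\{0,1\}$, so the paper's proof covers arbitrary klt pairs in one stroke, whereas your argument as written is only carried out for $\Q$-factorial $X$. What your approach buys is an explicit identification of the minimal model itself with the flip (no further contraction needed) in the $\Q$-factorial setting; what the paper's approach buys is brevity and generality.
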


\begin{cor}[Finite generation]\label{c-fg}
Let $(X/Z,B)$ be a klt pair where $B$ is a $\Q$-divisor and $f\colon X\to Z$ the given morphism. Then, the log canonical 
sheaf 
$$
\mathcal{R}(X/Z,B):=\bigoplus_{m\ge 0} f_*\mathcal{O}_X(m(K_X+B)) 
$$ 
is a finitely generated $\mathcal{O}_Z$-algebra.
\end{cor}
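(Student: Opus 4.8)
The plan is to localise the statement on $Z$, dispose of the trivial case, reduce to the situation where $K_X+B$ is big$/Z$ by passing to the Iitaka fibration and applying the canonical bundle formula, and then quote Theorem \ref{main} together with the base-point-free theorem. Since finite generation of an $\mathcal{O}_Z$-algebra is local on $Z$, I first shrink $Z$ to an affine open subset, so that $\mathcal{R}(X/Z,B)$ is finitely generated precisely when the graded ring $R:=\bigoplus_{m\ge 0}H^0(X,\mathcal{O}_X(m(K_X+B)))$ is finitely generated over $H^0(Z,\mathcal{O}_Z)$. If the relative Iitaka dimension $\kappa(X/Z,K_X+B)$ equals $-\infty$ then $f_*\mathcal{O}_X(m(K_X+B))=0$ for all $m>0$ and $R$ is a finite $H^0(Z,\mathcal{O}_Z)$-algebra, so there is nothing to prove; hence I assume $\kappa(X/Z,K_X+B)\ge 0$.

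Next I reduce to the case $K_X+B$ big$/Z$. After replacing $(X,B)$ by a higher log resolution model, which changes neither the finite generation question nor the klt property, the relative Iitaka fibration of $K_X+B$ becomes a morphism $h\colon X\to Y$ over $Z$ with $\dim(Y/Z)=\kappa(X/Z,K_X+B)$. The canonical bundle formula for $h$ then produces a klt pair $(Y,B_Y)$ and a nef$/Z$ $\Q$-divisor $M_Y$ with $K_X+B\sim_{\Q}h^*(K_Y+B_Y+M_Y)$ up to a correction contributing no sections, so that $R\cong\bigoplus_{m\ge 0}H^0(Y,\mathcal{O}_Y(\lfloor m(K_Y+B_Y+M_Y)\rfloor))$; moreover, by the defining property of the Iitaka fibration the divisor $D:=K_Y+B_Y+M_Y$ satisfies $\kappa(Y/Z,D)=\dim(Y/Z)$, i.e. $D$ is big$/Z$. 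We may also assume $Y$ is $\Q$-factorial, so that $D$ is $\Q$-Cartier, and it now suffices to prove that $\bigoplus_{m\ge 0}H^0(Y,\mathcal{O}_Y(\lfloor mD\rfloor))$ is finitely generated.

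The gain of having a big $D$ is that one can absorb the nef part $M_Y$ into an honest big klt boundary. Choose $m_0$ sufficiently divisible that $m_0D$ is Cartier and (by bigness) $|m_0D|\ne\emptyset$, and pick $G\in|m_0D|$; since $G\sim m_0D$ with $D$ big$/Z$, the divisor $G$ is big$/Z$. For a small rational $\epsilon>0$ the pair $(Y,\Delta)$ with $\Delta:=B_Y+\tfrac{\epsilon}{m_0}G$ is klt (take $\epsilon$ below the log canonical threshold of $G$ with respect to $(Y,B_Y)$), its boundary $\Delta$ is big$/Z$ because it dominates $\tfrac{\epsilon}{m_0}G$, and $K_Y+\Delta\sim_{\Q}(1+\epsilon)D$ is big$/Z$, hence pseudo-effective$/Z$. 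By Theorem \ref{main}(1), $(Y,\Delta)$ has a log minimal model $(W,\Delta_W)$; here $K_W+\Delta_W$ is nef$/Z$, and it is the pushforward of $K_Y+\Delta\sim_{\Q}(1+\epsilon)D$ along the birational contraction $Y\bir W$, hence big$/Z$. Since $\Delta_W$ is big$/Z$ the base-point-free theorem applies and shows $K_W+\Delta_W$ is semi-ample$/Z$.

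It follows that $\bigoplus_{m\ge 0}H^0(W,\mathcal{O}_W(m(K_W+\Delta_W)))$ is a finitely generated $\mathcal{O}_Z$-algebra — up to a Veronese subalgebra it is the section algebra of an ample$/Z$ divisor on the target of the contraction defined by $K_W+\Delta_W$. As $Y\bir W$ is a $(K_Y+\Delta)$-nonpositive birational contraction, this algebra equals $\bigoplus_{m\ge 0}H^0(Y,\mathcal{O}_Y(m(K_Y+\Delta)))$, and since $K_Y+\Delta\sim_{\Q}(1+\epsilon)D$ with $\epsilon$ rational, its finite generation is equivalent to that of $\bigoplus_{m\ge 0}H^0(Y,\mathcal{O}_Y(\lfloor mD\rfloor))$ (rescaling a $\Q$-Cartier divisor by a positive rational and passing to Veronese subalgebras both preserve finite generation). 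Unwinding the reductions gives that $\mathcal{R}(X/Z,B)$ is finitely generated. The step I expect to be the main obstacle is the reduction in the second paragraph: one has to set up the Iitaka fibration and the canonical bundle formula carefully enough that $(Y,B_Y)$ is genuinely klt, that $M_Y$ is genuinely nef, and that $R$ is literally reproduced on $Y$; once the problem is transported to a big divisor, the rest is a direct application of Theorem \ref{main} and the classical base-point-free theorem.
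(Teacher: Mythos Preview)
Your overall architecture is exactly the paper's: dispose of the case where $K_X+B$ is not effective$/Z$, reduce via the Iitaka fibration and canonical bundle formula to a klt pair with big boundary, run Theorem~\ref{main} to reach a nef model, then finish with the base point free theorem. The paper compresses the reduction into a single citation of Fujino--Mori [\ref{FM}], which hands you a klt pair $(S/Z,B_S)$ with $B_S$ big$/Z$ and matching section spaces; from there the argument is two lines.

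The gap is in your third paragraph, where you try to carry out the Fujino--Mori reduction by hand. You set $\Delta=B_Y+\tfrac{\epsilon}{m_0}G$ with $G\in|m_0D|$ and claim $K_Y+\Delta\sim_{\Q}(1+\epsilon)D$. This is false: since $D=K_Y+B_Y+M_Y$, one computes
\[
K_Y+\Delta\;\sim_{\Q}\;K_Y+B_Y+\epsilon D\;=\;(1+\epsilon)(K_Y+B_Y)+\epsilon M_Y\;=\;(1+\epsilon)D-M_Y,
\]
so the moduli part $M_Y$ has silently disappeared. Your ``absorption'' of $M_Y$ never actually absorbs it; you have only thrown in a small multiple of $D$, which does nothing to account for $M_Y$ in the identity. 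Consequently the ring of $K_Y+\Delta$ is the ring of $(1+\epsilon)D-M_Y$, not of $D$, and the chain back to $\mathcal R(X/Z,B)$ breaks.

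Repairing this is exactly the substance of [\ref{FM}]: one needs to know that $M_Y$ can be represented by an effective (indeed semi-ample, after suitable birational modification) $\Q$-divisor so that it can be folded into the boundary while keeping the pair klt, and this is not a consequence of mere nefness. Once you either cite [\ref{FM}] for a klt $(S/Z,B_S)$ with $B_S$ big$/Z$, or invoke the semi-ampleness/b-nef abundance of the moduli part to choose a general effective $M_Y'\sim_{\Q}M_Y$ and set $\Delta=B_Y+M_Y'+\tfrac{\epsilon}{m_0}G$, the rest of your argument (Theorem~\ref{main}, base point free, Veronese) goes through and matches the paper's proof.
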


The proof of the above theorem is divided into two independent parts. First we have 

\begin{thm}[Log minimal models]\label{t-mmodels}
Assume $\rm (1)$ of Theorem \ref{main} in dimension $d-1$ and let $(X/Z,B)$ be a klt pair 
of dimension $d$ where $B$ is big$/Z$. If $(X/Z,B)$ is effective, 
then it has a log minimal model.
\end{thm}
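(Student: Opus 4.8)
The plan is to follow Shokurov's strategy for the log minimal model program: reduce to a log smooth pair, run a minimal model program with scaling, and prove that it terminates by invoking the inductive hypothesis $(1)_{d-1}$ twice --- once to construct the flips, and once (through special termination) to stop the program. Since $K_X+B$ is pseudo-effective$/Z$ (indeed effective), no step in such a program can be a Mori fibre contraction, so termination automatically delivers a log minimal model.

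\textbf{Reduction to a log smooth pair.} First I would pass to a log smooth model. Choosing a log resolution $f\colon W\to X$, write $K_W+B_W=f^*(K_X+B)+E$ with $B_W,E\ge0$ having no common component and $E$ exceptional$/X$, enlarging $B_W$ by a small multiple of the reduced exceptional divisor if necessary so that $(W/Z,B_W)$ is $\Q$-factorial klt and log smooth. Then $B_W$ is still big$/Z$; the pair is still effective since $K_W+B_W\equiv f^*M+E$ with $f^*M+E\ge0$; and a routine negativity argument shows a log minimal model of $(W/Z,B_W)$ descends to one of $(X/Z,B)$. Moreover the boundaries $\Delta$ with $(W,\Delta)$ klt and $\Supp\Delta$ inside a fixed simple normal crossing divisor form a rational polytope, and it suffices to produce log minimal models at its rational points, so I may also assume $B$ is a $\Q$-divisor. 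Finally, since $B$ is big$/Z$, write $B\sim_\R A+C$ with $A$ a general ample$/Z$ $\Q$-divisor and $C\ge0$, and after one more blow-up assume $\Supp(A+C)\cup\Supp M$ is simple normal crossing.

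\textbf{Reducing $M$ into the boundary and running the MMP.} Now I would use the effectivity as in Birkar--Cascini--Hacon--McKernan: for small $t>0$ the pair $(X,B+tM)$ is klt, has big boundary, and satisfies $K_X+B+tM\equiv(1+t)(K_X+B)/Z$, so any minimal model program for it is a $(K_X+B)$-program and its output is a log minimal model of $(X,B)$. Iterating such enlargements together with auxiliary programs, and running an induction on the number of components of $M$ not lying in the boundary, I would reduce to a ``special'' configuration: a $\Q$-factorial dlt pair whose reduced boundary contains $\Supp M$, in which the program to follow consists of pl flips and divisorial contractions. Then I run the LMMP on $K_X+B$ with scaling of an ample$/Z$ divisor $H$ with $(X,B+H)$ klt and $K_X+B+H$ nef$/Z$. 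For the pl flips to exist in dimension $d$ I invoke Shokurov's reduction of pl flips to the LMMP in dimension $d-1$, available from $(1)_{d-1}$ via the extension theorem for pluricanonical forms (finite generation of the restricted algebra). The nef thresholds satisfy $\lambda_1\ge\lambda_2\ge\cdots$; as $K_X+B\equiv M/Z$ with $M\ge0$ is pseudo-effective$/Z$, the program cannot end with a Mori fibre space, so it is enough to show it terminates, and its output (pulled back through the first step) is then a log minimal model of $(X/Z,B)$.

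\textbf{Termination --- the main obstacle.} I expect the termination of this program to be the hard part. The mechanism is special termination: granting the LMMP, hence termination of flips, in dimension $\le d-1$ (a consequence of $(1)_{d-1}$), a program eventually stops meeting the log canonical strata of the $\Q$-factorial dlt pair, and in the special configuration this forces it to stop altogether. Two points need care. First, $(1)_{d-1}$ must genuinely apply on each divisorial stratum $S$, for which one needs the restriction of $B$ to $S$ to be big$/Z$; this holds because the general ample summand $A$ restricts under adjunction to an ample$/Z$ divisor on $S$. Second, one must exclude the thresholds decreasing forever: if $\lambda_i\to\lambda>0$ the tail of the program is an MMP with scaling for the klt big pair $(X_i,B_i+\lambda H_i)$, to which the same analysis applies after a small shift, while the case $\lambda_i\to0$ is handled by a limiting argument together with the finiteness of log minimal models for the big-boundary pair. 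Assembling this bookkeeping --- that the special configuration is preserved under each step, that the inductive hypothesis applies on the strata, and that only finitely many steps separate consecutive ``special'' stages --- is where the real work of the proof will lie.
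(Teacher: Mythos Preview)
Your outline has the right shape—reduce to a log smooth configuration, induct on the number of components of $M$ not lying in $\rddown{B}$ (the paper's invariant $\theta$), and use pl flips plus special termination—and this matches the paper's strategy. However, there are two genuine gaps.

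First, your invocation of ``finiteness of log minimal models'' is circular. The paper's finiteness result (Theorem~\ref{t-finiteness}) and full termination with scaling (Theorem~\ref{t-term}) both \emph{assume} $(1)$ of Theorem~\ref{main} in dimension $d$, whereas Theorem~\ref{t-mmodels} is precisely the step used to establish $(1)$ in dimension $d$; you only have $(1)_{d-1}$. So you may not appeal to finiteness or to full termination in dimension $d$ here. Fortunately, once you are genuinely in the ``special configuration'' $\Supp M\subseteq \rddown{B}$, every $(K_X+B)$-negative ray is negative on a component of $\rddown{B}$, so every flip is a pl flip and special termination (Theorem~\ref{t-s-term}, which needs only $(1)_{d-1}$) terminates the program outright—no analysis of $\lambda_i\to 0$ is required in that case.

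Second, and more seriously, the hard part is not the termination in the special configuration but the \emph{inductive step} that gets you there, and your sketch does not address it. Replacing $B$ by $B+tM$ for \emph{small} $t$ does not lower $\theta$; to drop $\theta$ you must push a coefficient up to $1$ (the paper's $\alpha$), and after truncation you obtain a dlt pair $(X,B+C)$ with $K_X+B+C\equiv M+C$ and strictly smaller $\theta$. By induction $(X,B+C)$ has a log minimal model $(Y,B_Y+C_Y)$, and running the LMMP on $K_Y+B_Y$ with scaling of $C_Y$ (again only pl flips, terminating by special termination) yields a \emph{nef} model of $(X,B)$—but in general not a log minimal model, because the discrepancy inequalities of Definition~\ref{d-mmodel}(1) can fail for divisors contracted in passing from $X$ to $Y$. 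The paper closes this gap via the set
\[
\mathcal{T}=\{t\in[0,1]:(X/Z,B+tC)\ \text{has a log minimal model}\}:
\]
one shows $\mathcal{T}$ is open to the left, and at $\tau=\inf\mathcal{T}$ one compares discrepancies along a sequence $t_k\downarrow\tau$ of nef models (using negativity) and then invokes the extraction lemma (Lemma~\ref{l-extraction-klt}) to rebuild a genuine log minimal model at $\tau$, forcing $\tau=0$. This limiting-plus-extraction argument is the real content of the proof and is absent from your proposal. (A minor side point: your reduction to rational $B$ is not justified—perturbing $B$ can destroy the effectivity hypothesis $K_X+B\equiv M\ge 0$—and the paper does not make this reduction.)
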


A proof of this theorem is given in section 2 based on ideas in [\ref{B-mmodels}][\ref{BCHM}]. Second we have 

\begin{thm}[Nonvanishing]\label{t-nonvanishing}
Let $(X/Z,B)$ be a klt pair where $B$ is big$/Z$. 
If $K_X+B$ is pseudo-effective$/Z$, then $(X/Z,B)$ is effective.
\end{thm}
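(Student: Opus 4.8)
The plan is to reduce the statement, by routine steps, to a non-vanishing assertion on a smooth projective variety whose boundary contains an ample summand, and then to establish that assertion by an analytic argument of Siu type (in the present generality due to P\u{a}un; a Hacon--McKernan style extension theorem gives an algebraic alternative). First I would reduce to $Z=\mathrm{pt}$. Both the hypothesis and the conclusion are governed by the generic fibre $X_\eta$ of $f\colon X\to Z$: if $K_X+B$ is pseudo-effective$/Z$ then $(K_X+B)|_{X_\eta}$ is pseudo-effective, and conversely, if $(K_X+B)|_{X_\eta}\equiv M_\eta\ge0$ one takes the closure $M\ge0$ of $M_\eta$ in $X$ and notes that $K_X+B-M$ restricts to an $\R$-linearly trivial divisor on $X_\eta$, hence is numerically equivalent over an open subset of $Z$ to a pullback from $Z$, which is numerically trivial$/Z$; thus $K_X+B\equiv M/Z$ and $(X/Z,B)$ is effective. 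So it suffices to show: for $X$ projective, $(X,B)$ klt with $B$ big and $K_X+B$ pseudo-effective, $K_X+B$ is $\R$-linearly equivalent to an effective divisor.

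Next I would use bigness to produce an ample summand and then pass to a smooth model. Since $B$ is big, $B\sim_\R A_0+G$ with $A_0$ an ample $\Q$-divisor and $G\ge0$; writing $K_X+B\sim_\R K_X+((1-t)B+tG)+tA_0$ and using openness of the klt condition, $(X,(1-t)B+tG)$ is klt for small rational $t>0$, so we may assume $B=\Delta+A$ with $A$ an ample $\Q$-divisor and $(X,\Delta)$ klt. Replacing $X$ by a log resolution of $(X,\Delta)$, absorbing the effective exceptional discrepancy divisor into $\Delta$, and perturbing the pulled-back ample class by a small exceptional divisor to keep it ample, we reduce to: $X$ smooth projective, $(X,\Delta)$ klt with $\Supp\Delta$ simple normal crossing, $A$ an ample $\Q$-divisor, and $L:=K_X+\Delta+A$ pseudo-effective; show $L$ is effective. (Passing to the resolution does not affect effectivity of $K_X+B$, since the correction is effective and exceptional.)

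The core is the non-vanishing for such $L$. Pseudo-effectivity supplies a singular hermitian metric $h$ on $L$ with semipositive curvature current, which may be taken with minimal singularities; moreover $L+\delta A$ is big for every $\delta>0$, so $mL+(\text{ample})$ is effective for $m\gg0$ and the task is to remove the ample correction. Here I would run Siu's argument: with an auxiliary ample $H$, the Ohsawa--Takegoshi extension theorem together with Nadel vanishing for $\mathcal I(h^{m})$ produces, by an induction on the ``depth'' of the correction, a nonzero section of $\lceil m(K_X+\Delta)\rceil+mA+(\text{small ample})$ for every $m$; letting $m\to\infty$, exploiting ampleness of $A$, and approximating the real coefficients of $\Delta$ by rational ones --- with semicontinuity and subadditivity of multiplier ideals controlling klt-ness and the ideals $\mathcal I(h^{m})$ --- one obtains a nonzero section of a genuine multiple of $K_X+\Delta+A$. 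Hence $K_X+\Delta+A$, and therefore the original $K_X+B$, is $\R$-linearly equivalent to an effective divisor.

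I expect the main obstacle to be exactly this passage from ``pseudo-effective with an ample summand'' to ``effective''. Effectivity is neither numerical nor closed, so one cannot simply take a weak limit of the effective $\R$-divisors $M_\delta\equiv L+\delta A$ as $\delta\downarrow0$: that limit only recovers the already-known pseudo-effectivity. One must genuinely manufacture global sections, which forces the analytic extension machinery together with a careful control of multiplier ideals under diophantine approximation, ensuring the ``error'' ample divisors shrink to zero while the constructed sections survive in the limit. A more algebraic variant inducts on $d=\dim X$: choose a general $S\in|kA+\cdots|$ with $(X,\Delta+S)$ plt and $S$ normal, so adjunction yields a klt pair $(S,\Delta_S)$ with $(K_X+\Delta+S)|_S=K_S+\Delta_S$; one then has to verify $S\not\subset\mathbf B_-(K_X+\Delta)$ in order to conclude that $K_S+\Delta_S$ is pseudo-effective, apply non-vanishing in dimension $d-1$, and lift the resulting section to $X$ by a Hacon--McKernan style extension theorem --- the difficulty there being precisely this base-locus verification and the applicability of the lifting, both of which again rely on the ample part $A$.
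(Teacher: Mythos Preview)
Your reduction to $Z=\mathrm{pt}$ by taking the closure of an effective divisor on the generic fibre has a gap: numerical triviality of $K_X+B-M$ on the generic fibre says nothing about curves in special fibres, so $K_X+B\equiv M/Z$ does not follow. One can attempt a repair with a vertical correction plus a pullback $f^*H$, but that needs the fibrewise statement to be $\sim_\R$ rather than merely $\equiv$, together with a spreading-out argument you do not supply. The paper handles the relative case quite differently: after proving the absolute statement it invokes positivity of twisted relative canonical bundles (Berndtsson--P\u{a}un) to put a suitable metric on $q_\eta(K_{X/Z}+B_\eta)$, and then applies an Ohsawa--Takegoshi type theorem once more to extend a section from a very generic fibre to all of $X$.

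More seriously, your absolute-case argument does not close. The first variant (``run Siu's argument and let $m\to\infty$'') names the tools but gives no mechanism for why sections persist as the ample correction shrinks; since effectivity is not a closed condition, this is exactly the hard point. The second variant with a \emph{general} $S\in|kA+\cdots|$ correctly locates the obstruction (verifying $S\not\subset\mathbf{B}_-(K_X+B)$ and making an extension theorem apply) but does not overcome it; with such an $S$ you produce sections of $K_X+B+S$, not of $K_X+B$. The paper's resolution is structural and missing from your sketch: first $\num(K_X+B)=0$ is dispatched directly by Nakayama's divisorial Zariski decomposition; for $\num(K_X+B)\geq 1$ one builds a highly singular $T\equiv m(K_X+B)+B$ concentrated at a very general point, passes to a common log resolution with the family $\Theta_{K_X+B,\varepsilon}$, and uses tie-break to single out one lc place $S$. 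This yields the key identity $\widehat B+S+\Delta=\mu^\star\big(B+\tau m(K_X+B)\big)+E$ with $\Delta\geq 0$ effective and $E$ exceptional, which is precisely what allows effectivity of $K_{\widehat X}+S+\widehat B$ to descend to effectivity of $K_X+B$, and which simultaneously guarantees that $(K_{\widehat X}+S+\widehat B)|_S$ is pseudo-effective. After that one still needs Diophantine approximation of $(\widehat B,\widehat T_S)$, a Shokurov--Nadel trick converting the effective $\Q$-divisor on $S$ into an honest section with prescribed vanishing along the $Y_{j|S}$, and a quantitative invariance-of-plurigenera iteration with explicit control of how many steps the $\eta$-error permits; none of these ingredients appear in your outline.
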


A proof of this theorem is given in section 3; it is mainly based on the arguments in 
[\ref{mp2}] (see [\ref{shok}] and [\ref{siu2}] as well). We give a rough comparison of the 
proof of this theorem with the proof of the corresponding theorem in [\ref{BCHM}], that is [\ref{BCHM}, Theorem D]. 

One crucial feature of the proof of Theorem \ref{t-nonvanishing} is that it does not use the log minimal model program. 
The proof goes as follows: 

(a) We first assume that $Z$ is a point, and using Zariski type decompositions one can 
create lc centres and pass to plt pairs, more precisely, by going on a sufficiently high resolution we can replace $(X/Z,B)$ by a plt 
pair $(X/Z,B+S)$ where $S$ is a smooth prime divisor, $K_X+B+S|_S$ is pseudo-effective, 
$B$ is big and its components do not intersect. 

(b) By induction, the ${\mathbb R}$-bundle $K_X+B+S|_S$ has an ${\mathbb R}$-section say $T$, which can be assumed to be singular enough for the extension purposes.

(c) Diophantine approximation of the couple $(B, T)$: we can find pairs $(B_i, T_i)$ with rational coefficients and sufficiently close to 
$(B, T)$ (in a very precise sense) such that 
$$K_X+B+S =\sum r_i(K_X+B_i+S)$$ 
for certain real numbers $r_i\in [0,1]$ and such that all the pairs $(X/Z,B_i+S)$ 
are plt and each $K_X+B_i+S|_S$ is numerically equivalent with $T_i$.
Moreover, one can improve this to  $K_X+B_i+S|_S\sim_{\Q} T_i\ge 0$. 

(d) Using the {\sl invariance of plurigenera techniques}, 
one can lift this to  $K_X+B_i+S\sim_{\Q} M_i\ge 0$ 
and then a relation $K_X+B+S\equiv M\ge 0$. 

(e) Finally, we get the theorem in the general case, i.e. when $Z$ is not a point, using positivity properties of 
direct image sheaves and another application of extension theorems.
\medskip

In contrast, the log minimal model  program  is an important 
ingredient of the proof of [\ref{BCHM}, Theorem D] which proceeds as follows: 
(a') This step is the same as (a) above. 
(b') By running the log minimal model  program appropriately and using induction 
on finiteness of log minimal models and termination with scaling, one constructs a model $Y$ birational 
to $X$ such that $K_Y+B_Y+S_Y|_{S_Y}$ is nef where $K_Y+B_Y+S_Y$ is the push down of $K_X+B+S$. 
Moreover, by Diophantine approximation, we can find boundaries $B_i$ with rational coefficients and sufficiently close to 
$B$  such that $B=\sum r_iB_i$ for certain real numbers $r_i\in [0,1]$ and such that each   $K_Y+B_{i,Y}+S_Y$ 
is plt and $K_Y+B_{i,Y}+S_Y|_{S_Y}$ is nef.
(c') By applying induction and the base point free theorem one gets $K_Y+B_{i,Y}+S_Y|_{S_Y}\sim_{\Q} N_i\ge 0$.
(d') The Kawamata-Viehweg vanishing theorem now gives $K_Y+B_{i,Y}+S_Y\sim_{\Q} M_{i,Y}\ge 0$ from which 
we easily get a relation $K_X+B+S\sim_{\R} M\ge 0$. 
(e') Finally, we get the theorem in the general case, i.e. when $Z$ is not a point,  by restricting to the generic 
fibre and applying induction.

%%%%%%%%%%%%%%%%%%%%%%%%%%%%%%%%%%%%%%%%%%%%%%%%%

\section{Log minimal models}

In this section we prove Theorem \ref{t-mmodels} (cf. [\ref{BCHM}, Theorems A, B, C, E]). The results 
in this section are also implicitly or explicitly proved in [\ref{BCHM}]. 
We hope that this section also helps the reader to read [\ref{BCHM}].

\subsection*{Preliminaries}
Let $k$ be an algebraically closed field of characteristic zero fixed throughout this 
section. When we write an $\R$-divisor $D$ as $D=\sum d_iD_i$ (or similar notation) we mean that $D_i$ are distinct prime divisors. The norm $||D||$ is defined as 
$\max\{|d_i|\}$. For a birational map $\phi\colon X\bir Y$ 
and an $\R$-divisor $D$ on $X$ we often use $D_Y$ to mean  the birational transform of 
$D$, unless specified otherwise.

A \emph{pair} $(X/Z,B)$ consists of normal quasi-projective varieties $X,Z$ over $k$, an $\R$-divisor $B$ on $X$ with
coefficients in $[0,1]$ such that $K_X+B$ is $\mathbb{R}$-Cartier, and a projective 
morphism $X\to Z$. For a prime divisor $E$ on some birational model of $X$ with a
nonempty centre on $X$, $a(E,X,B)$ denotes the log discrepancy.

 An $\R$-divisor $D$ on $X$ is called \emph{pseudo-effective}$/Z$ if up to numerical equivalence/$Z$ it is the limit of effective $\R$-divisors, i.e. for any ample$/Z$ $\R$-divisor $A$ and real number $a>0$, $D+aA$ is big$/Z$. A pair $(X/Z,B)$ is called \emph{effective} if there is an $\R$-divisor $M\ge 0$ such that $K_X+B\equiv M/Z$;
in this case, we call $(X/Z,B,M)$ a \emph{triple}.  By a log resolution of a triple $(X/Z,B,M)$
we mean a log resolution of $(X, \Supp B+M)$. A triple $(X/Z,B,M)$ is log smooth if 
$(X, \Supp B+M)$ is log smooth. When we refer to a triple as being lc, dlt, etc, we mean that 
the underlying pair $(X/Z,B)$ has such properties. 

For a triple  $(X/Z,B,M)$,
define
$$
\theta(X/Z,B,M):=\#\{i~|~m_i\neq 0 ~~\mbox{and}~~ b_i\neq 1\}
$$
where $B=\sum b_iD_i$ and $M=\sum m_iD_i$.

Let $(X/Z,B)$ be a lc pair. By a \emph{log flip}$/Z$ we mean the flip of a $K_X+B$-negative extremal flipping contraction$/Z$, 
and by a \emph{pl flip}$/Z$ we mean a log flip$/Z$ when $(X/Z,B)$ is $\Q$-factorial dlt and the log flip is also an $S$-flip for 
some component $S$ of $\rddown{B}$, i.e. $S$ is numerically negative on the flipping contraction.

A \emph{sequence of log flips$/Z$ starting with} $(X/Z,B)$ is a sequence $X_i\bir X_{i+1}/Z_i$ in which  
$X_i\to Z_i \leftarrow X_{i+1}$ is a $K_{X_i}+B_i$-flip$/Z$, $B_i$ is the birational transform 
of $B_1$ on $X_1$, and $(X_1/Z,B_1)=(X/Z,B)$.

\begin{defn}[Log minimal models and Mori fibre spaces]\label{d-mmodel}
Let $(X/Z,B)$ be a dlt pair, $(Y/Z,B_Y)$ a $\Q$-factorial dlt pair, $\phi\colon X\bir Y/Z$ a birational map such that $\phi^{-1}$ does not contract divisors, and $B_Y=\phi_*B$. 

$\rm (1)$ We say that $(Y/Z,B_Y)$ is a nef model of $(X/Z,B)$ if $K_Y+B_Y$ is nef$/Z$. 
We say that $(Y/Z,B_Y)$ is a log minimal model of $(X/Z,B)$ if in addition 
$$
a(D,X,B)< a(D,Y,B_Y)
$$ 
for any prime divisor $D$ on $X$ which is exceptional/$Y$. 

$\rm (2)$ Let $(Y/Z,B_Y)$ be a log minimal model of $(X/Z,B)$ such that 
$K_Y+B_Y$ is semi-ample$/Z$ so that there is a contraction $f\colon Y\to S/Z$ 
and an ample$/Z$ $\R$-divisor $H$ on $S$ such that $K_Y+B_Y\sim_\R f^*H/Z$. 
We call $S$ the log canonical model of $(X/Z,B)$ which is unique up to isomorphism$/Z$.

$\rm (3)$ On the other hand, we say that $(Y/Z,B_Y)$ is a Mori fibre space of $(X/Z,B)$ if 
there is a $K_Y+B_Y$-negative extremal contraction $Y\to T/Z$ such that $\dim T<\dim Y$, and if
$$
a(D,X,B)\le a(D,Y,B_Y)
$$ 
for any prime divisor $D$ on birational models of $X$ with strict inequality for any prime divisor $D$ on $X$ which is exceptional/$Y$.
\end{defn}

Note that in [\ref{B-mmodels}], it is not assumed that $\phi^{-1}$ does not contract divisors. However, since in this paper we are mainly concerned with constructing models for klt pairs, in that case our definition here is equivalent to that of [\ref{B-mmodels}].

\begin{lem}\label{l-ray}
Let $(X/Z,B+C)$ be a $\Q$-factorial lc pair where $B,C\ge 0$, 
$K_X+B+C$ is nef/$Z$, and $(X/Z,B)$ is dlt. Then, either $K_X+B$ is also nef/$Z$ or there is an extremal ray $R/Z$ such
that $(K_X+B)\cdot R<0$, $(K_X+B+\lambda C)\cdot R=0$, and $K_X+B+\lambda C$ is nef$/Z$ where
$$
\lambda:=\inf \{t\ge 0~|~K_X+B+tC~~\mbox{is nef/$Z$}\}
$$
\end{lem}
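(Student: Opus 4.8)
The plan is to apply the Cone Theorem to the $\Q$-factorial dlt pair $(X/Z,B)$ and to track how nefness of $K_X+B+tC$ breaks down as $t$ decreases from $1$ to $\lambda$. If $K_X+B$ is nef$/Z$ there is nothing to prove, so assume not. First note that $T:=\{t\ge 0\mid K_X+B+tC\text{ is nef}/Z\}$ is the intersection of $[0,\infty)$ with the preimage of the closed convex cone of nef$/Z$ classes under the affine map $t\mapsto[K_X+B+tC]$, hence is a closed interval; since $1\in T$ and $0\notin T$ we get $T=[\lambda,u]$ with $0<\lambda\le 1\le u$, so in particular $K_X+B+\lambda C$ is nef$/Z$. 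It is not ample$/Z$: otherwise it stays ample under the small perturbation by $-\epsilon C$, so $K_X+B+(\lambda-\epsilon)C$ is ample, hence nef$/Z$, for small $\epsilon>0$, contradicting the definition of $\lambda$.

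Next I would invoke the Cone Theorem: $\NEbar(X/Z)=\NEbar(X/Z)_{K_X+B\ge 0}+\sum_j R_j$, where the $R_j$ are the $(K_X+B)$-negative extremal rays$/Z$, each spanned by a curve $\ell_j$ with $0<-(K_X+B)\cdot\ell_j\le 2\dim X$, and for every ample$/Z$ $\R$-divisor $A$ only finitely many $R_j$ satisfy $(K_X+B+A)\cdot R_j\le 0$. Nefness of $K_X+B+C$ gives $(K_X+B+C)\cdot\ell_j\ge 0>(K_X+B)\cdot\ell_j$, so $C\cdot\ell_j>0$; set $\lambda_j:=-(K_X+B)\cdot\ell_j/(C\cdot\ell_j)\in(0,1]$. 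From the identity $K_X+B+tC=t(K_X+B+C)+(1-t)(K_X+B)$ one sees that for $t\in[0,1]$ the class $K_X+B+tC$ is nonnegative on $\NEbar(X/Z)_{K_X+B\ge 0}$, so by the decomposition it is nef$/Z$ exactly when $(K_X+B+tC)\cdot\ell_j\ge 0$ for all $j$, i.e. exactly when $t\ge\lambda_j$ for all $j$; hence $\lambda=\sup_j\lambda_j$. It now suffices to show this supremum is attained, because if $\lambda_{j_0}=\lambda$ then $R:=R_{j_0}$ satisfies $(K_X+B)\cdot R<0$, $(K_X+B+\lambda C)\cdot R=(K_X+B+\lambda_{j_0}C)\cdot R=0$, and $K_X+B+\lambda C$ is nef$/Z$ by the first paragraph.

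The step I expect to be the main obstacle is precisely showing that $\sup_j\lambda_j$ is a maximum, and the plan is a finite-intersection argument. For $0<t<\lambda$ the set $S_t:=\{R_j\mid(K_X+B+tC)\cdot\ell_j<0\}=\{R_j\mid\lambda_j>t\}$ is nonempty (since $\sup_j\lambda_j=\lambda>t$) and decreases as $t\uparrow\lambda$, with $\bigcap_{0<t<\lambda}S_t=\{R_j\mid\lambda_j=\lambda\}$; thus it is enough to know that each $S_t$ is finite. To that end one takes, for each small $\epsilon>0$, a $(K_X+B+(\lambda-\epsilon)C)$-negative extremal ray $R/Z$: since $\lambda-\epsilon\in[0,1]$, the identity above forces $(K_X+B)$ to be negative on $R$, so $R$ is one of the $R_j$, with $\lambda_j>\lambda-\epsilon$, and its minimal curve satisfies $-(K_X+B)\cdot\ell_j\le 2\dim X$ and $C\cdot\ell_j<2\dim X/(\lambda-\epsilon)$. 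The delicate point is that these bounds control $\ell_j$ only against $K_X+B$ and against $C$, not against an ample divisor, so the boundedness clause of the Cone Theorem cannot be invoked verbatim; one must use the nefness of $K_X+B+C$ together with $\Q$-factoriality of $X$ to rule out the degenerate possibility that infinitely many $R_j$ accumulate onto the hyperplane $(K_X+B)^{\perp}$ --- equivalently, that $(K_X+B+\lambda C)^{\perp}\cap\NEbar(X/Z)$ meets only the $(K_X+B)$-nonnegative part of the cone, in which case no ray $R$ with the required properties would exist. Once this finiteness is secured, the lemma follows immediately.
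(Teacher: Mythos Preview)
Your reduction is correct: the whole lemma comes down to showing that $\lambda=\sup_j\lambda_j$ is attained. But you stop precisely there. You correctly diagnose that the finiteness clause of the Cone Theorem does not apply (since $tC$ need not be ample$/Z$), and then write that ``one must use the nefness of $K_X+B+C$ together with $\Q$-factoriality of $X$ to rule out the degenerate possibility'' of accumulation --- but you give no argument. This is not a detail to be waved away: it \emph{is} the content of the lemma. Indeed your sets $S_t$ can genuinely be infinite when $C$ is not ample$/Z$ (the $R_j$ may accumulate onto a ray in $(K_X+B)^\perp\cap C^\perp$), so the finite-intersection strategy, as written, does not close.

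The paper does not attempt to prove that $S_t$ is finite. Instead it shows directly that the values $\{\lambda_j\}$ attain their supremum by a discreteness argument on intersection numbers, using two results of Shokurov. First, for extremal curves $\Gamma_i$ one has $(K_X+B)\cdot\Gamma_i=\sum_j r_j n_{i,j}/m$ with fixed positive reals $r_j$, a fixed positive integer $m$, and integers $n_{i,j}\ge -2(\dim X)m$; in particular these numerators lie in a \emph{finite} set. Second --- and this is where the lc hypothesis on $(X/Z,B+C)$ enters --- one writes $K_X+B+C=\sum_k r_k'(K_X+\Delta_k)$ with each $(X/Z,\Delta_k)$ lc, $\Delta_k$ rational, and $(K_X+\Delta_k)\cdot\Gamma_i\ge 0$; hence $(K_X+B+C)\cdot\Gamma_i=\sum_k r_k' n_{i,k}'/m'$ with $n_{i,k}'\in\Z_{\ge 0}$ and $m'$ fixed. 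Then
\[
\frac{1}{\lambda_i}=\frac{(K_X+B+C)\cdot\Gamma_i}{-(K_X+B)\cdot\Gamma_i}+1
=-\frac{m\sum_k r_k' n_{i,k}'}{m'\sum_j r_j n_{i,j}}+1,
\]
whose denominator ranges over a finite set of negative reals and whose numerator over nonnegative integer combinations of fixed reals; it follows that $\inf_i 1/\lambda_i$ is attained. This is the missing idea in your proposal.
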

\begin{proof}
This is proved in [\ref{B-mmodels}, Lemma 2.7] assuming that $(X/Z,B+C)$ is dlt. 
We extend it to the lc case.

Suppose that $K_X+B$ is not nef$/Z$ and let $\{R_i\}_{i\in I}$ be the set of $(K_X+B)$-negative extremal rays/$Z$
and $\Gamma_i$ an extremal curve of $R_i$ [\ref{ordered}, Definition 1]. Let $\mu:=\sup \{\mu_i\}$ where
$$
\mu_i:=\frac{-(K_X+B)\cdot \Gamma_i}{C\cdot \Gamma_i}
$$

Obviously, $\lambda=\mu$ and $\mu\in (0,1]$. It is enough to
prove that $\mu=\mu_l$ for some $l$. 
By [\ref{ordered}, Proposition 1], there are positive real numbers
$r_1,\dots, r_s$ and a positive integer $m$ (all independent of $i$) such that
$$
(K_X+B)\cdot \Gamma_i=\sum_{j=1}^s\frac{r_jn_{i,j}}{m}
$$
where $-2(\dim X)m\le n_{i,j}\in\Z$.
On the other hand, by [\ref{log-models}, First Main Theorem 6.2, Remark 6.4] we can write 
$$
K_X+B+C=\sum_{k=1}^{t} r_k'(K_X+\Delta_k)
$$ 
where $r_1',\cdots, r_{t}'$ are positive 
real numbers such that for any $k$ we have: $(X/Z,\Delta_k)$ is lc with $\Delta_k$ being rational, and $(K_X+\Delta_k)\cdot \Gamma_i\ge 0$ for any $i$. Therefore, 
there is a positive integer $m'$ (independent of $i$) such that 
$$
(K_X+B+C)\cdot \Gamma_i=\sum_{k=1}^{t}\frac{r_k'n_{i,k}'}{m'}
$$
where $0\le n_{i,k}'\in\Z$. 

The set $\{n_{i,j}\}_{i,j}$ is finite.
Moreover,
$$
\frac{1}{\mu_i}= \frac{C\cdot \Gamma_i}{-(K_X+B)\cdot \Gamma_i}=\frac{(K_X+B+C)\cdot \Gamma_i}{-(K_X+B)\cdot \Gamma_i}+1
=-\frac{m\sum_k r_k'n_{i,k}'}{m'\sum_j r_jn_{i,j}}+1
$$

Thus, $\inf \{\frac{1}{\mu_i}\}=\frac{1}{\mu_l}$ for some $l$ and so $\mu=\mu_l$.
\end{proof}

\begin{defn}[LMMP with scaling]\label{d-scaling}
Let $(X/Z,B+C)$ be a lc pair such that $K_X+B+C$ is nef/$Z$, $B\ge 0$, and $C\ge 0$ is $\R$-Cartier. 
Suppose that either $K_X+B$ is nef/$Z$ or there is an extremal ray $R/Z$ such
that $(K_X+B)\cdot R<0$, $(K_X+B+\lambda_1 C)\cdot R=0$, and $K_X+B+\lambda_1 C$ is nef$/Z$ where
$$
\lambda_1:=\inf \{t\ge 0~|~K_X+B+tC~~\mbox{is nef/$Z$}\}
$$
 When $(X/Z,B)$ is $\Q$-factorial dlt, the last sentence follows from 
Lemma \ref{l-ray}. If $R$ defines a Mori fibre structure, we stop. Otherwise assume that $R$ gives a divisorial 
contraction or a log flip $X\bir X'$. We can now consider $(X'/Z,B'+\lambda_1 C')$  where $B'+\lambda_1 C'$ is 
the birational transform 
of $B+\lambda_1 C$ and continue the argument. That is, suppose that either $K_{X'}+B'$ is nef/$Z$ or 
there is an extremal ray $R'/Z$ such
that $(K_{X'}+B')\cdot R'<0$, $(K_{X'}+B'+\lambda_2 C')\cdot R'=0$, and $K_{X'}+B'+\lambda_2 C'$ is nef$/Z$ where
$$
\lambda_2:=\inf \{t\ge 0~|~K_{X'}+B'+tC'~~\mbox{is nef/$Z$}\}
$$
 By continuing this process, we obtain a 
special kind of LMMP$/Z$ which is called the \emph{LMMP$/Z$ on $K_X+B$ with scaling of $C$}; note that it is not unique. 
This kind of LMMP was first used by Shokurov [\ref{log-flips}].
When we refer to \emph{termination with scaling} we mean termination of such an LMMP.

\emph{Special termination  with scaling} means termination near $\rddown{B}$ of any sequence of log flips$/Z$ with scaling 
of $C$, i.e. after finitely many steps, the locus of the extremal rays in the process do not intersect $\Supp \rddown{B}$.

When we have a lc pair $(X/Z,B)$, we can always find an ample$/Z$ $\R$-Cartier divisor $C\ge 0$ such that 
$K_X+B+C$ is lc and nef$/Z$,  so we can run the LMMP$/Z$ with scaling assuming that all the 
necessary ingredients exist, eg extremal rays, log flips. 
\end{defn}

%%%%%%%%%%%%%%%%%%%%%%%%%%%%%%%%%%%%%%%%%%%%%
\subsection*{Finiteness of models.\\\\} 

({\bf P}) Let $X\to Z$ be a projective morphism of normal quasi-projective varieties, $A\ge 0$ a $\Q$-divisor on $X$, and 
$V$ a rational (i.e. with a basis consisting of rational divisors) finite dimensional affine subspace of the space of $\R$-Weil divisors on $X$. Define 
$$
\mathcal{L}_{A}(V)=\{B \mid 0\le (B-A)\in V, ~ \mbox{and $(X/Z,B)$ is lc} \}
$$
By [\ref{log-flips}, 1.3.2], $\mathcal{L}_{A}(V)$ is a rational polytope (i.e. a polytope with rational vertices) inside the rational affine space $A+V$.

\begin{rem}\label{r-local}
With the setting as in {\rm({\bf P})} above assume that $A$ is big$/Z$. 
Let $B\in \mathcal{L}_{A}(V)$ such that $(X/Z,B)$ is klt. Let $A'\ge 0$ be an ample$/Z$ $\Q$-divisor. Then, there is a rational number $\epsilon>0$ and an $\R$-divisor $G\ge 0$ such that $A\sim_\R \epsilon A'+G/Z$ and $(X/Z,B-A+\epsilon A'+G)$ is klt. Moreover, there is a neighborhood of $B$ in $\mathcal{L}_{A}(V)$ such that for any $B'$ in that neighborhood $(X/Z,B'-A+\epsilon A'+G)$ is klt. The point is that we can change $A$ 
and get an ample part $\epsilon A'$ in the boundary. So, when we are concerned with a problem locally around $B$ we feel free to assume that $A$ is actually ample by replacing it with $\epsilon A'$.
\end{rem}

\begin{lem}\label{l-ray2}
With the setting as in {\rm({\bf P})} above assume that $A$ is big$/Z$, $(X/Z,B)$ is klt of dimension $d$, and $K_X+B$ is nef$/Z$ where $B\in \mathcal{L}_{A}(V)$. 
 Then, there is $\epsilon>0$ (depending on $X,Z,V,A,B$)
such that if $R$ is a $K_X+B'$-negative extremal ray$/Z$ for some $B'\in \mathcal{L}_{A}(V)$ with $||B-B'||<\epsilon$ 
then $(K_X+B)\cdot R=0$.   
\end{lem}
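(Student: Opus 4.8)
The statement asserts a kind of local rigidity: near a nef klt point $B$ in the polytope $\mathcal{L}_A(V)$, any extremal ray that is negative for a nearby $K_X+B'$ must already be $(K_X+B)$-trivial. The plan is to argue by contradiction using boundedness of the lengths of extremal rays. Suppose no such $\epsilon$ exists. Then there is a sequence $B_i\in\mathcal{L}_A(V)$ with $\|B-B_i\|\to 0$ and, for each $i$, a $K_X+B_i$-negative extremal ray $R_i/Z$ with $(K_X+B)\cdot R_i>0$ (it cannot be negative since $K_X+B$ is nef, so strict positivity is the only alternative to the desired conclusion). Pick an extremal curve $\Gamma_i$ generating $R_i$ in the sense of [\ref{ordered}, Definition 1]. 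By the cone theorem / boundedness of length of extremal rays (or directly via [\ref{ordered}, Proposition 1], exactly as invoked in the proof of Lemma \ref{l-ray}), the intersection numbers $(K_X+B)\cdot\Gamma_i$ lie in a fixed finite set, or at least are bounded above; in particular $0<(K_X+B)\cdot\Gamma_i\le 2d$ with denominators controlled, hence $(K_X+B)\cdot\Gamma_i$ is bounded away from $0$ by some fixed $\delta>0$.

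Next I would estimate $(K_X+B_i)\cdot\Gamma_i$. Write $B_i - B = (B_i-A)-(B-A)\in V$, a vector of norm $\le\|B-B_i\|\cdot(\text{const})$ inside the fixed finite-dimensional space $V$. Since $\mathcal{L}_A(V)$ is a fixed rational polytope with finitely many vertices (by [\ref{log-flips}, 1.3.2]), and since $C\cdot\Gamma_i$ is bounded above for $C$ ranging over a bounded set of divisors (again by boundedness of extremal curves — the length bound gives $(-K_X-\Delta)\cdot\Gamma_i\le 2d$ for the various $\Delta$ as in Lemma \ref{l-ray}, and writing $B_i-B$ as a combination of these controls the intersection), we get
$$
|(K_X+B_i)\cdot\Gamma_i - (K_X+B)\cdot\Gamma_i| = |(B_i-B)\cdot\Gamma_i| \le c\,\|B-B_i\|
$$
for a constant $c$ independent of $i$. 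Choosing $i$ large enough that $c\,\|B-B_i\|<\delta$ forces $(K_X+B_i)\cdot\Gamma_i>0$, contradicting that $R_i$ is $K_X+B_i$-negative. This contradiction yields the existence of $\epsilon$.

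The main obstacle is making the uniform bound on $(B_i-B)\cdot\Gamma_i$ rigorous: a priori the curves $\Gamma_i$ vary and could have unbounded degree against arbitrary divisors. The fix, as in Lemma \ref{l-ray}, is to use that $\Gamma_i$ are \emph{extremal} curves of $(K_X+B_i)$-negative rays with $B_i$ ranging over the fixed polytope $\mathcal{L}_A(V)$, so the boundedness-of-length results of [\ref{ordered}] apply uniformly: there is a fixed finite set of rational log pairs $(X/Z,\Delta_k)$ (from the decomposition of divisors in $A+V$, using [\ref{log-models}, 6.2]) and a fixed $m$ such that every relevant intersection number is $\tfrac{1}{m}$ times a bounded integer. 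Once the intersection numbers live in a fixed discrete set, both the lower bound $\delta$ on $(K_X+B)\cdot\Gamma_i$ and the Lipschitz-type estimate above follow formally, and one should also invoke Remark \ref{r-local} to replace $A$ by an ample divisor if the argument needs $A$ ample rather than merely big.
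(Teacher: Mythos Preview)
Your overall strategy has the right shape, but both of the two crucial estimates rest on claims that do not follow from the sources you cite.

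First, the lower bound $\delta>0$ for $(K_X+B)\cdot\Gamma_i$ (when nonzero) cannot be extracted from [\ref{ordered}, Proposition 1] or from the argument of Lemma~\ref{l-ray}: those give discreteness only for intersections with extremal curves of $(K_X+B)$-\emph{negative} rays, whereas here $\Gamma_i$ is extremal for the \emph{different} pair $(X,B_i)$ and has $(K_X+B)\cdot\Gamma_i>0$. The paper obtains $\delta$ by a different mechanism: since $B$ is big$/Z$ and $(X/Z,B)$ is klt with $K_X+B$ nef$/Z$, the base point free theorem gives $K_X+B\sim_\R f^*H/Z$ with $H\sim_\R\sum a_iH_i$ for ample$/Z$ Cartier divisors $H_i$ and $a_i>0$, so $(K_X+B)\cdot C$ is either $0$ or $>\delta:=\min a_i$ for \emph{every} curve $C/Z$.

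Second, the Lipschitz estimate $|(B_i-B)\cdot\Gamma_i|\le c\,\|B-B_i\|$ would require a uniform bound on $D\cdot\Gamma_i$ for $D$ in a basis of $V$, i.e.\ a degree bound on the $\Gamma_i$. Boundedness of length only gives $-(K_X+B_i)\cdot\Gamma_i\le 2d$; it says nothing about individual components, and the decomposition from [\ref{log-models}, 6.2] applies to a \emph{single} lc $\R$-divisor, not uniformly to the whole polytope. The paper sidesteps this entirely. Rather than trying to make $(B'-B)\cdot\Gamma$ small, it uses that $(B'-B)\cdot\Gamma<-\delta$ and \emph{scales this up}: choose a rational polytope $\mathcal{C}\subset\mathcal{L}_A(V)$ around $B$ with all $(X/Z,B')$ klt, and push $B'$ along the ray from $B$ through $B'$ to the boundary point $B''\in\partial\mathcal{C}$. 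Then $(K_X+B'')\cdot R<0$, so the cone theorem gives a rational curve $\Gamma\in R$ with $(K_X+B'')\cdot\Gamma\ge -2d$. But $(B''-B')\cdot\Gamma=\dfrac{\|B''-B'\|}{\|B'-B\|}\,(B'-B)\cdot\Gamma<-\dfrac{\|B''-B'\|}{\|B'-B\|}\,\delta$, and since $\|B''-B'\|$ is bounded below while $\|B'-B\|$ is small, this forces $(K_X+B'')\cdot\Gamma<-2d$, a contradiction. This scaling trick is the missing idea; your direct estimate cannot be made to work without it.
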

\begin{proof}
This is proved in [\ref{ordered}, Corollary 9] in a more general situtation. Since $B$ is big$/Z$ and $K_X+B$ is nef$/Z$, the base 
point free theorem implies that $K_X+B$ is semi-ample$/Z$ hence there is a contraction $f\colon X\to S/Z$ and an ample$/Z$ 
$\R$-divisor $H$ on $S$ such that $K_X+B\sim_\R f^*H/Z$. We can write $H\sim_\R \sum a_iH_i/Z$ where $a_i>0$ 
and the $H_i$ are ample$/Z$ Cartier divisors on $S$. Therefore, there is $\delta>0$ such that for any curve 
$C/Z$ in $X$ either $(K_X+B)\cdot C=0$ or $(K_X+B)\cdot C>\delta$. 

Now let $\mathcal{C} \subset \mathcal{L}_{A}(V)$ be a rational polytope of maximal dimension which contains an open 
neighborhood of $B$ in $\mathcal{L}_{A}(V)$ and such that $(X/Z,B')$ is klt for any $B'\in \mathcal{C}$. Pick
$B'\in \mathcal{C}$ and let $B''$ be the point on the boundary of $\mathcal{C}$ such that $B'$ belongs to the line segment 
determined by $B,B''$. Let $R$ be a $K_X+B'$-negative extremal ray $R/Z$. If $(K_X+B)\cdot R>0$, then 
$(K_X+B'')\cdot R<0$ and there is a rational curve $\Gamma$ in $R$ such that 
$(K_X+B'')\cdot \Gamma\ge -2d$. Since $(K_X+B')\cdot \Gamma<0$, $(B'-B)\cdot \Gamma <-\delta$. 
If $||B-B'||$ is too small we cannot have 
$$
(K_X+B'')\cdot \Gamma=(K_X+B')\cdot \Gamma+(B''-B')\cdot \Gamma\ge -2d
$$
because $(B''-B')\cdot \Gamma$ would be too negative.
\end{proof}

\begin{thm}\label{t-finiteness}
Assume $\rm (1)$ of Theorem \ref{main} in dimension $d$. With the setting as in {\rm({\bf P})} above assume that $A$ is big$/Z$. Let $\mathcal{C}\subseteq \mathcal{L}_{A}(V)$ 
be a rational polytope such that $(X/Z,B)$ is klt for any $B\in \mathcal{C}$ where $\dim X=d$. Then, there are finitely many birational 
maps $\phi_i\colon X\bir Y_i/Z$ such that for any $B\in \mathcal{C}$ with $K_X+B$ pseudo-effective$/Z$, there 
is $i$ such that $(Y_i/Z,B_{Y_i})$ is a log minimal model of $(X/Z,B)$.
\end{thm}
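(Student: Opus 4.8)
The plan is to prove Theorem \ref{t-finiteness} by a compactness argument on the rational polytope $\mathcal C$, reducing the problem to a local statement around each point $B\in\mathcal C$ and then covering $\mathcal C$ by finitely many such local pieces. Since Theorem \ref{t-mmodels} together with Theorem \ref{t-nonvanishing} gives assertion (1) of Theorem \ref{main} (which we are assuming in dimension $d$), every klt pair $(X/Z,B)$ with $B$ big$/Z$ and $K_X+B$ pseudo-effective$/Z$ has a log minimal model; the content here is the \emph{finiteness} of the birational models $Y_i$ that occur as $B$ varies. First I would set up the local picture: fix $B_0\in\mathcal C$, apply Theorem \ref{main}(1) to get a log minimal model $(Y/Z,B_{0,Y})$ of $(X/Z,B_0)$, and observe that the birational map $X\bir Y$ is obtained by running an LMMP$/Z$ with scaling of some ample$/Z$ divisor. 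Using Remark \ref{r-local} we may assume $A$ is ample, so every $B'$ near $B_0$ still has $A$-part available as an ample boundary piece.

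The key step is to show that a single model $Y$ works for all $B$ in a neighborhood $N$ of $B_0$ in $\mathcal C$ (intersected with the pseudo-effective locus). For this I would argue as follows. If $K_Y+B_{0,Y}$ is nef$/Z$, then by Lemma \ref{l-ray2} applied on $Y$ (with the transformed polytope), there is $\epsilon>0$ such that any $K_Y+B'_Y$-negative extremal ray$/Z$, for $B'$ with $\|B_0-B'\|<\epsilon$, is also $K_Y+B_{0,Y}$-trivial. A ray that is $K_Y+B_{0,Y}$-trivial and $K_Y+B'_Y$-negative, with $B_0,B'$ both in the rational polytope, cannot exist once $\epsilon$ is small enough — this is exactly the mechanism in the proof of Lemma \ref{l-ray2}, using that $K_Y+B_{0,Y}$ is semi-ample$/Z$ and the length-of-extremal-rays bound $\ge -2d$. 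Hence $K_Y+B'_Y$ is nef$/Z$ for all such $B'$. It remains to check the log discrepancy inequality in Definition \ref{d-mmodel}(1): for a prime divisor $D$ on $X$ exceptional$/Y$ we have $a(D,X,B_0)<a(D,Y,B_{0,Y})$, and since both sides are affine-linear (in fact continuous) in the boundary and the inequality is strict, it persists for $B'$ in a possibly smaller neighborhood. One also needs that $\phi^{-1}\colon Y\bir X$ contracts no divisors and that $(Y/Z,B'_Y)$ is $\Q$-factorial dlt — both are preserved since $Y$ is fixed and $B'_Y=\phi_*B'$ stays in a klt polytope on $Y$.

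The main obstacle is the passage from the local statement to the global one — i.e., handling \emph{every} $B\in\mathcal C$, not just those in a fixed neighborhood of one point. Covering $\mathcal C$ by finitely many neighborhoods $N_1,\dots,N_n$ as above is immediate from compactness of $\mathcal C$, but one must be careful that the construction is uniform: the models $Y_i$ obtained should be finite in number, which they are since there is one per $N_i$. A subtlety is that the pseudo-effective locus $\{B\in\mathcal C : K_X+B \text{ pseudo-effective}/Z\}$ is itself a rational polytope (a face-like subset cut out by the pseudo-effective threshold), and one should restrict attention to it; points $B$ on its boundary need the same local argument but the log minimal model may have $K_Y+B_Y$ nef and $\equiv 0$ on some fibration, which causes no trouble. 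I would also note that one invokes the LMMP with scaling only to \emph{produce} the finitely many $Y_i$; once produced, verifying that a given $(Y_i/Z,B_{Y_i})$ is a log minimal model of $(X/Z,B)$ is a direct check of the nefness and discrepancy conditions, requiring no further running of the program. The argument is essentially that of [\ref{BCHM}, Theorem E], and the only input beyond the lemmas already established is the existence statement Theorem \ref{main}(1) in dimension $d$, which is granted by hypothesis.
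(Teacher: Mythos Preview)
Your argument has a genuine gap at the key step: the claim that a \emph{single} model $Y$ works in a full neighborhood of $B_0$ is false, and your justification misreads Lemma~\ref{l-ray2}. That lemma (and its proof mechanism) only rules out $K_Y+B'_Y$-negative extremal rays $R$ with $(K_Y+B_{0,Y})\cdot R>0$; the $\delta$-gap argument uses that $(K_Y+B_{0,Y})\cdot C$ is either $0$ or $>\delta$, and derives a contradiction from the \emph{strictly positive} case. It says nothing about rays with $(K_Y+B_{0,Y})\cdot R=0$, and such rays certainly can exist and be $K_Y+B'_Y$-negative for $B'$ arbitrarily close to $B_0$. Concretely, if $K_Y+B_{0,Y}$ defines a small contraction $f\colon Y\to S$ (a flopping contraction), then perturbing $B_0$ in one direction leaves $Y$ as a minimal model, while perturbing in the opposite direction forces a flop to $Y^+$; both directions contain pseudo-effective boundaries in $\mathcal C$, and no single model suffices near $B_0$.

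What the paper does instead is induction on $\dim\mathcal C$. After replacing $X$ by $Y$ and shrinking $\mathcal C$ so that the discrepancy inequalities persist, one passes to the log canonical model $f\colon X\to S$ (so $K_X+B\equiv 0/S$) and applies the inductive hypothesis \emph{over $S$} to the proper faces of $\mathcal C$, obtaining finitely many models $Y_j/S$. Then Lemma~\ref{l-ray2} is applied on each $Y_j$ to conclude that any $K_{Y_j}+B'_{Y_j}$-negative extremal ray$/Z$ is $K_{Y_j}+B_{Y_j}$-trivial, hence lies over $S$; since $(Y_j/S,B'_{Y_j})$ is already a log minimal model over $S$ (via the line through $B$ and the boundary point $B''$), there are no such rays over $S$ either, and $(Y_j/Z,B'_{Y_j})$ is a log minimal model over $Z$. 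The finitely many models needed near $B_0$ are the $Y_j$ coming from the faces, not $Y$ alone. Your compactness reduction to a local statement is fine, but the local statement you need is ``finitely many models work near $B_0$,'' not ``one model works,'' and proving the former requires this extra inductive layer.
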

\begin{proof}
Remember that as usual $B_{Y_i}$ is the birational transform of $B$. We may proceed locally, so fix $B\in \mathcal{C}$. If $K_X+B$ is not pseudo-effective$/Z$ then the same 
holds in a neighborhood of $B$ inside $\mathcal{C}$, so 
we may assume that $K_X+B$ is pseudo-effective$/Z$. By assumptions, $(X/Z,B)$ 
has a log minimal model $(Y/Z,B_Y)$. Moreover, the polytope $\mathcal{C}$ determines a rational polytope $\mathcal{C}_Y$ of 
$\R$-divisors on $Y$ by taking birational transforms of elements of $\mathcal{C}$. If we shrink $\mathcal{C}$ 
around $B$ we can assume that the inequality in (1) of Definition \ref{d-mmodel} is satisfied for every $B'\in \mathcal{C}$, that is, 
$$
a(D,X,B')< a(D,Y,B_Y')
$$ 
for any prime divisor $D\subset X$ contracted$/Y$. Moreover, a log minimal model of 
 $(Y/Z,B_Y')$ would also be a log minimal model of $(X/Z,B')$, for any $B'\in \mathcal{C}$. 
Therefore, we can replace $(X/Z,B)$ by $(Y/Z,B_Y)$ and assume from now on that $(X/Z,B)$ 
is a log minimal model of itself, in particular, $K_X+B$ is nef$/Z$.

Since $B$ is big$/Z$, by the base point 
free theorem, $K_X+B$ is semi-ample$/Z$ so there is a contraction $f\colon X\to S/Z$ such that $K_X+B\sim_\R f^*H/Z$ 
for some ample$/Z$ $\R$-divisor $H$ on $S$. Now by induction on the dimension 
of $\mathcal{C}$, we may assume that 
the theorem already holds over $S$ for all the points on the proper faces of $\mathcal{C}$, that is, 
there are finitely many birational maps $\psi_j\colon X\bir Y_j/S$ such that 
for any $B''$ on the boundary of $\mathcal{C}$ with $K_X+B''$ pseudo-effective$/S$, there is $j$ such that 
$(Y_j/S,B_{Y_j}'')$ is a log minimal model of $(X/S,B'')$.

By Lemma \ref{l-ray2}, if we further shrink $\mathcal{C}$ around $B$, then for any $B'\in \mathcal{C}$, any 
$j$, and any $K_{Y_j}+B_{Y_j}'$-negative extremal ray $R/Z$ we have the equality $(K_{Y_j}+B_{Y_j})\cdot R=0$. 
Note that all the pairs $(Y_j/Z,B_{Y_j})$ are klt and $K_{Y_j}+B_{Y_j}\equiv 0/S$ and nef$/Z$ because $K_X+B\equiv 0/S$.

Assume that  $B\neq B'\in \mathcal{C}$ such that $K_X+B'$ is pseudo-effective$/Z$, and let $B''$ be the unique point on 
the boundary of $\mathcal{C}$ such that $B'$ belongs to the line segment given by $B$ and $B''$. Since 
$K_X+B\equiv 0/S$, $K_X+B''$ is pseudo-effective$/S$, and $(Y_j/S,B_{Y_j}'')$ is a log minimal model of $(X/S,B'')$ for some $j$. 
So, $(Y_j/S,B_{Y_j}')$ is a log minimal model of $(X/S,B')$. Furthermore, $(Y_j/Z,B_{Y_j}')$ is a log minimal model of $(X/Z,B')$ because any $K_{Y_j}+B_{Y_j}'$-negative extremal ray $R/Z$ would be over $S$ by the last paragraph.
\end{proof}

%%%%%%%%%%%%%%%%%%%%%%%%%%%%%%%%%%%%%%%%%%%%%%%%%%%
\subsection*{Termination with scaling}

\begin{thm}\label{t-term}
Assume $\rm (1)$ of Theorem \ref{main} in dimension $d$ and let $(X/Z,B+C)$ be a klt pair of dimension $d$ 
where $B\ge 0$ is big$/Z$,  
$C\ge 0$ is $\R$-Cartier, and $K_X+B+C$ is nef$/Z$. Then, we can run the LMMP$/Z$ on $K_X+B$ with scaling of $C$ 
and it terminates.  
\end{thm}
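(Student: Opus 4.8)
The plan is to run the prescribed LMMP and, assuming it fails to terminate, obtain a contradiction from the finiteness of log minimal models (Theorem \ref{t-finiteness}, which is available under our hypothesis). Since $(X/Z,B)$ is klt with $B$ big$/Z$ and $K_X+B+C$ nef$/Z$, Lemma \ref{l-ray} (together with the hypothesis, which supplies the log flips needed to run the program) lets us run the LMMP$/Z$ on $K_X+B$ with scaling of $C$. We get $X=X_1\bir X_2\bir\cdots$ and a non-increasing sequence $1\ge\lambda_1\ge\lambda_2\ge\cdots\ge 0$, where $(X_i/Z,B_i+\lambda_iC_i)$ is $\Q$-factorial klt with $K_{X_i}+B_i+\lambda_iC_i$ nef$/Z$, and $X_i\bir X_{i+1}$ acts on an extremal ray $R_i/Z$ with $(K_{X_i}+B_i)\cdot R_i<0$ and $(K_{X_i}+B_i+\lambda_iC_i)\cdot R_i=0$. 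Suppose the sequence is infinite and set $\lambda:=\lim\lambda_i$. A divisorial contraction drops the Picard number, so after discarding finitely many steps we may assume every step is a flip; then each $X_i$ is isomorphic to $X_1$ in codimension one.

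For fixed $i$ the set of $t\ge 0$ with $K_{X_i}+B_i+tC_i$ nef$/Z$ is a closed interval containing $\lambda_i$; using this and that $X_1\bir X_i$ is an isomorphism in codimension one, one checks that each $(X_i/Z,B_i+\lambda_iC_i)$ is a log minimal model of $(X_1/Z,B_1+\lambda_iC_1)$ and that $K_{X_1}+B_1+tC_1$ is pseudo-effective$/Z$ for all $t\in[\lambda,\lambda_1]$. Since $B_1$ is big$/Z$, after a small ample perturbation as in Remark \ref{r-local} we may fix a rational polytope $\mathcal C\subseteq\mathcal L_A(V)$, with $V$ spanned by the components of $B_1$ and $C_1$, lying in the klt locus and containing $\{B_1+tC_1\mid t\in[0,\lambda_1]\}$. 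Theorem \ref{t-finiteness} then yields finitely many models $Y_1,\dots,Y_m/Z$ such that every pseudo-effective $(X_1/Z,B_1+tC_1)$ has a log minimal model among the $Y_l$. The induced subdivision of $[\lambda,\lambda_1]$ into the finitely many closed sets on which a given $K_{Y_l}+(B_1+tC_1)_{Y_l}$ is nef$/Z$ has only finitely many breakpoints, and a standard argument (compare [\ref{B-mmodels}]) forces the non-increasing sequence $(\lambda_i)$ to take only finitely many values; hence $\lambda_i=\lambda$ for all $i\gg 0$.

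It remains to exclude an infinite sequence of flips with $\lambda_i=\lambda$, and we may assume $\lambda>0$ since $\lambda=0$ would make $K_{X_i}+B_i$ nef$/Z$. Now every step is $(K_{X_i}+B_i+\lambda C_i)$-trivial. As $B_i$ is big$/Z$, the base point free theorem makes $K_{X_i}+B_i+\lambda C_i$ semi-ample$/Z$, giving a contraction $X_i\to T/Z$ with $K_{X_i}+B_i+\lambda C_i\equiv 0/T$, and every $R_i$ lies over $T$. Writing $B_i+\lambda C_i\sim_\R A^\circ+G$ with $A^\circ$ an ample$/Z$ $\Q$-divisor (hence also ample$/T$, since $X_i\to T$ factors $X_i\to Z$) and $G\ge 0$, we see that $B_i+\lambda C_i$ is big$/T$; thus $(X_i/T,B_i+\lambda C_i)$ is klt with $K+B+\lambda C\equiv 0/T$ and big$/T$ boundary, so $X_i$ is of Fano type over $T$. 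A variety of Fano type has only finitely many $\Q$-factorial minimal models (a standard consequence of Theorem \ref{t-finiteness}), whereas for $i\ge N$ the pairs $(X_i/T,B_i+\lambda C_i)$ are pairwise distinct $\Q$-factorial marked minimal models of $(X_N/T,B_N+\lambda C_N)$: each has $K+B+\lambda C\equiv 0/T$ nef$/T$, each marking $X_N\bir X_i$ contracts no divisors, and a nonempty composition of the flips is never an isomorphism compatible with the markings. This contradiction completes the proof.

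The crux is the last step. Once the scaling factor stabilizes, the LMMP is no longer driven by the positivity of $K_X+B$ — over $T$ the class $K+B+\lambda C$ becomes numerically trivial — and one must instead exploit the relative log Calabi--Yau / Fano type structure over $T$, together with the finiteness of minimal models for Fano type varieties; this is where the inductive input "(1) of Theorem \ref{main} in dimension $d$" is really used. A subsidiary point demanding care is that the finitely many models delivered by Theorem \ref{t-finiteness} must actually account for the models $X_i$ visited by the LMMP, which one handles via the (flop-)uniqueness of $\Q$-factorial minimal models and the structure of the nef chambers.
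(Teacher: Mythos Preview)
Your overall strategy --- derive a contradiction from Theorem~\ref{t-finiteness} --- is the right one, but the argument as written has a real gap in the middle step, and the final step hides the very trick that would close it.

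\medskip

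\textbf{The gap in the stabilisation step.} You establish that each $(X_i/Z,B_i+\lambda_iC_i)$ is a log minimal model of $(X_1/Z,B_1+\lambda_iC_1)$ and then invoke Theorem~\ref{t-finiteness} to produce finitely many $Y_1,\dots,Y_m$. But Theorem~\ref{t-finiteness} only says that \emph{some} $Y_l$ is a log minimal model of $(X_1/Z,B_1+tC_1)$ for each $t$; it does not say that every log minimal model, and in particular $X_i$, is isomorphic to one of the $Y_l$. Consequently the ``breakpoints'' you obtain are the endpoints of the intervals where $K_{Y_l}+(B_1+tC_1)_{Y_l}$ is nef$/Z$, whereas $\lambda_i$ is the nef threshold on $X_i$. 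There is no a~priori reason these coincide, so the deduction ``$(\lambda_i)$ takes only finitely many values'' does not follow. The reference to a ``standard argument (compare [\ref{B-mmodels}])'' does not supply one.

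\medskip

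\textbf{What the paper does instead.} The paper avoids this two-case split entirely by a single uniform device. After arranging $B-\epsilon(H_1+\cdots+H_m)\ge 0$ for general ample$/Z$ Cartier divisors $H_j$ spanning $N^1(X/Z)$, it chooses on each $X_i$ a small ample$/Z$ divisor $G_i$ (a combination of the transforms $H_{i,j}$) so that $(X_i,B_i+G_i+\lambda_iC_i)$ is klt, the birational transform of $B_i+G_i+\lambda_iC_i$ on $X_1$ lies in a fixed rational polytope $\mathcal{C}\subset\mathcal{L}_A(V)$, and $K_{X_i}+B_i+G_i+\lambda_iC_i$ is \emph{ample}$/Z$. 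Then $X_i$ is itself the log canonical model for this boundary. Now apply Theorem~\ref{t-finiteness} to $\mathcal{C}$: for each $i$ some $Y_{l(i)}$ is a log minimal model, and the induced morphism $Y_{l(i)}\to X_i$ to the log canonical model is small (the divisors of $Y_{l(i)}$ all come from $X_1$, and $X_1\bir X_i$ is small), hence an isomorphism since $X_i$ is $\Q$-factorial. Thus $X_i\cong Y_{l(i)}$, and finiteness of the $Y_l$ forces $\phi_{i,j}$ to be an isomorphism for infinitely many $i,j$, contradicting the fact that each flip strictly increases some log discrepancy. No case distinction on whether $(\lambda_i)$ stabilises is needed.

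\medskip

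\textbf{On your final step.} Even granting the stabilisation, your appeal to ``a variety of Fano type has only finitely many $\Q$-factorial minimal models (a standard consequence of Theorem~\ref{t-finiteness})'' is not free: the usual way to extract that statement from Theorem~\ref{t-finiteness} is precisely the ample-perturbation trick above, applied over $T$ rather than $Z$. So the key idea you are missing in Step~2 is tacitly assumed in Step~3. Once you have the perturbation device, it is cleaner to use it once, over $Z$, and dispense with the detour through $T$.
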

\begin{proof}
Note that existence of klt log flips in dimension $d$ follows from the assumptions (see the proof of 
Corollary \ref{c-flips}). Run the LMMP$/Z$ on $K_X+B$ with scaling of $C$ and assume that we get an infinite sequence 
$X_i\bir X_{i+1}/Z_i$ of log flips$/Z$. We may assume that $X=X_1$. 
Let $\lambda_i$ be as in Definition \ref{d-scaling} and put $\lambda=\lim \lambda_i$. 
So, by definition, $K_{X_i}+B_i+\lambda_iC_i$ is nef$/Z$ and numerically zero over $Z_i$ 
where $B_i$ and $C_i$ are the birational transforms of $B$ and $C$ respectively. By taking a 
$\Q$-factorialisation of $X$, which exists by induction on $d$ and Lemma \ref{l-extraction-klt}, we can assume 
that all the $X_i$ are $\Q$-factorial. 

Let $H_1, \cdots, H_m$ be general ample$/Z$ Cartier divisors 
on $X$ which generate the space $N^1(X/Z)$. Since $B$ is big$/Z$, we may assume that 
$B-\epsilon (H_1+\cdots+H_m)\ge 0$ for some rational number $\epsilon>0$ (see Remark \ref{r-local}). Put $A=\frac{\epsilon}{2} (H_1+\cdots+H_m)$.
Let $V$  be the space generated by  
the components of $B+C$, and let $\mathcal{C}\subset \mathcal{L}_A(V)$ be a rational polytope of maximal dimension containing 
neighborhoods of $B$ and $B+C$ such that $(X/Z,B')$ is klt for any $B'\in \mathcal{C}$.
Moreover, we can choose $\mathcal{C}$ such that for each $i$ there is an ample$/Z$ $\Q$-divisor $G_i=\sum g_{i,j}H_{i,j}$ on $X_i$ 
with sufficiently small coefficients, 
where $H_{i,j}$ on $X_i$ is the birational transform of $H_j$, such that $(X_i,B_i+G_i+\lambda_iC_i)$ 
is klt and the birational transform of $B_i+G_i+\lambda_iC_i$ on $X$ belongs to $\mathcal{C}$.

Let $\phi_{i,j}\colon X_i\bir X_j$ be the birational map induced by the above sequence of log flips. Since $K_{X_i}+B_i+G_i+\lambda_iC_i$ is ample$/Z$ and since the log canonical model is unique, by Theorem \ref{t-finiteness}, there exist an infinite set $J\subseteq \N$ and a birational map $\phi\colon X=X_1\bir Y/Z$ such that $\psi_j:=\phi_{1,j}\phi^{-1}$ is 
an isomorphism for any $j\in J$. This in turn implies that $\phi_{i,j}$ is an isomorphism for any $i,j\in J$. This is not possible as any log flip increases some log discrepancies. 
\end{proof}

\begin{thm}\label{t-s-term}
Assume $\rm (1)$ of Theorem \ref{main} in dimension $d-1$ and let $(X/Z,B+C)$ be a $\Q$-factorial dlt pair of dimension 
$d$ where $B-A\ge 0$ for some ample$/Z$ $\R$-divisor $A\ge 0$, and $C\ge 0$. Assume that 
$(Y/Z,B_Y+C_Y)$ is a log minimal model of $(X/Z,B+C)$. Then, the special termination holds for the LMMP$/Z$ on $K_Y+B_Y$ with scaling of $C_Y$.
\end{thm}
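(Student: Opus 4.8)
The plan is to carry out Shokurov's special termination argument, bootstrapping from termination of the log minimal model program in dimension $d-1$: the assumption, together with Theorems \ref{t-finiteness} and \ref{t-term}, supplies finiteness of log minimal models and termination of the LMMP$/Z$ with scaling for klt pairs with big boundary in dimension $d-1$. So suppose the LMMP$/Z$ on $K_Y+B_Y$ with scaling of $C_Y$ (which can be run under the assumption) does not terminate. Divisorial contractions strictly drop the Picard number, so after finitely many steps only flips occur; discarding these and renaming, we get an infinite sequence of log flips$/Z$ $Y=Y_1\bir Y_2\bir\cdots$ with scaling, with scaling numbers $\lambda_i$ non-increasing, and with $B_i,C_i$ the birational transforms of $B_1:=B_Y$, $C_1:=C_Y$, so that $K_{Y_i}+B_i+\lambda_iC_i$ is nef$/Z$ and numerically trivial over the base of the $i$-th flip. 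Since flips contract no divisors, $\lfloor B_i\rfloor$ has a fixed set of components, namely the birational transforms of the components of $\lfloor B_1\rfloor$; hence it is enough to fix one such component $S=S_1$, with birational transform $S_i$ on $Y_i$, and show that the flipping locus of $Y_i\bir Y_{i+1}$ is disjoint from $S_i$ for all $i\gg0$. Assume otherwise, so the flipping locus meets $S_i$ for infinitely many $i$.

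Next, pass to $S$ by adjunction. Let $B_{S_i}$ be the different, so that $(K_{Y_i}+B_i)|_{S_i}=K_{S_i}+B_{S_i}$; then each $(S_i/Z,B_{S_i})$ is dlt of dimension $d-1$, and, since $B-A\ge0$ for some ample$/Z$ $\R$-divisor $A\ge0$ — a property inherited by $B_Y$, by each $B_i$, and by the different — $B_{S_i}$ is big$/Z$; put $C_{S_i}:=C_i|_{S_i}$. Each flip $Y_i\bir Y_{i+1}$ induces a birational map $\psi_i\colon S_i\bir S_{i+1}$, which is an isomorphism precisely when the flipping locus misses $S_i$, and is \emph{not} an isomorphism when the flipping locus meets $S_i$: in that case there is a divisor $D$ over $S$ with $a(D,S_i,B_{S_i})<a(D,S_{i+1},B_{S_{i+1}})$, obtained by taking $D$ with centre on $Y_i$ inside the intersection of the flipping locus with $S_i$ and applying the Negativity Lemma together with the discrepancy compatibility of adjunction, $a(D,S_i,B_{S_i})=a(D,Y_i,B_i)$. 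The crucial point is that after finitely many steps every nontrivial $\psi_i$ is an isomorphism in codimension one on $S$, indeed a $(K_{S_i}+B_{S_i})$-flip; the inputs are the monotonicity $a(D,S_i,B_{S_i})\le a(D,S_{i+1},B_{S_{i+1}})$ for each fixed $D$ over $S$ (Negativity Lemma, via the adjunction identity above), together with a finiteness argument ruling out infinitely many divisorial contractions or extractions in the induced sequence.

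Granting this, we reach a contradiction. Discarding the trivial steps, the maps $\psi_i$ — infinitely many of them nontrivial, one for each $i$ at which the flipping locus meets $S_i$ — assemble into an infinite LMMP$/Z$ on $K_{S_1}+B_{S_1}$ with scaling of $C_{S_1}$, on a $(d-1)$-dimensional pair with big boundary. But by the assumption and Theorem \ref{t-term} in dimension $d-1$ — after the standard reduction of dlt pairs to klt pairs, itself a lower-dimensional instance of special termination — such an LMMP terminates. Hence the flipping locus of $Y_i\bir Y_{i+1}$ is disjoint from $S_i$ for all $i\gg0$, and taking the maximum over the finitely many components of $\lfloor B_1\rfloor$ yields the special termination.

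The step I expect to be the main obstacle is the crux of the second paragraph: showing that the maps induced on $S$ stabilize, i.e. that divisorial contractions and extractions occur only finitely often in the restricted sequence, so that its tail is a genuine terminating LMMP with scaling. This is the technical core of Shokurov's special termination; it rests on the monotonicity of log discrepancies under flips (the Negativity Lemma) and on bounding the relevant divisors over $S$, fed by the lower-dimensional termination of Theorems \ref{t-finiteness} and \ref{t-term}. A secondary point, handled routinely, is keeping the hypotheses of those theorems valid on $S$ — passing from dlt to klt and retaining bigness of the different — which is arranged by the usual dlt-to-klt reduction and by perturbing the ample part as in Remark \ref{r-local}.
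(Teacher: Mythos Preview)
Your outline follows the classical Shokurov special-termination scheme: restrict to a component $S$ of $\lfloor B\rfloor$, argue that the induced maps $\psi_i\colon S_i\bir S_{i+1}$ are eventually flips, and terminate by Theorem~\ref{t-term} in dimension $d-1$. The paper ultimately does the same restriction-plus-induction, deferring the stabilisation step to [\ref{B-mmodels}, Lemma~2.11], but it prepares the ground differently, and that preparation is where the substance is.

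The paper first splits into two cases according to whether $\lambda_j\equiv 1$ or $\lambda_j<1$ for some $j$; this determines whether $C$ must be absorbed into the replacement boundary. Then, crucially, it performs a \emph{plt reduction on $X$}: using $B-A\ge 0$ with $A$ ample$/Z$, it writes $B\sim_\R A'+B'/Z$ (respectively $B+C\sim_\R A'+B'+C'/Z$) with $A'\ge 0$ ample$/Z$ and $\lfloor B'\rfloor=\lfloor A'+B'+C\rfloor=S$, so that the pairs $(X/Z,A'+B'+C)$ and $(Y/Z,A'_Y+B'_Y+C_Y)$ are plt and the original LMMP with scaling is simultaneously an LMMP for the new plt boundary. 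Adjunction then yields a \emph{klt} pair on $S_Y$ whose boundary retains a genuinely big part, and Theorem~\ref{t-term} applies immediately.

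Your version skips this and it shows in two places. First, for a dlt (not plt) pair the different $B_{S_i}$ is only dlt: other components of $\lfloor B\rfloor$ meeting $S$ give coefficient-$1$ pieces on $S_i$, so Theorem~\ref{t-term} does not apply as stated; your ``standard reduction of dlt pairs to klt pairs, itself a lower-dimensional instance of special termination'' is not a recognised move and does not close this. Second, the assertion that bigness is ``inherited by the different'' is unjustified: on $Y$ and on each $Y_i$ the transform $A_{Y_i}$ is only big$/Z$, not ample$/Z$, and restriction of big divisors need not be big. The paper's plt reduction is exactly the device that fixes both issues at once --- by arranging pltness (hence klt restriction) and by isolating an ample part on $X$ before any birational modification --- so that the citation to [\ref{B-mmodels}] and Theorem~\ref{t-term} can do the rest.
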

\begin{proof}
Note that we are assuming that we are able to run a specific LMMP$/Z$ on $K_Y+B_Y$ with scaling of $C_Y$ otherwise there is nothing to prove, i.e. here we do not prove that such an LMMP$/Z$ exists but assume its existence. Suppose that 
we get a sequence $Y_i\bir Y_{i+1}/Z_i$ of log flips$/Z$ for such an LMMP$/Z$. Let $S$ be 
a component of $\rddown{B}$ and let $S_Y$ and $S_{Y_i}$ be its birational transform on $Y$ and $Y_i$ respectively. 

First suppose that we always have  $\lambda_j=1$
 in every step where $\lambda_j$ is as in Definition \ref{d-scaling}. Since $B-A\ge 0$ and since $A$ is ample$/Z$,  we can write 
$B+C\sim_\R A'+B'+C'/Z$ such that
 $A'\ge 0$ is ample/$Z$, $B',C'\ge 0$, $\rddown{B'}=\rddown{A'+B'+C'}=S$, 
$(X/Z,A'+B'+C')$ and $(Y/Z,A'_Y+B_Y'+C_Y')$ are plt, 
 and the LMMP/$Z$ on $K_Y+B_Y$ with scaling of $C_Y$ induces an LMMP/$Z$ on $K_Y+A_Y'+B_Y'$ 
with scaling of $C_Y'$. If $S_{Y_1}\neq 0$, by restricting to $S_Y$ and using a standard argument (cf. proof of [\ref{B-mmodels}, Lemma 2.11]) 
together with Theorem \ref{t-term}, we deduce that 
the log flips in the sequence  $Y_i\bir Y_{i+1}/Z_i$ do not intersect $S_{Y_i}$ for $i\gg 0$.

Now assume that we have  $\lambda_j<1$ for some $j$. Then, $\rddown{B+\lambda_j C}=\rddown{B}$ for any $j\gg 0$. So, we may assume that $\rddown{B+C}=\rddown{B}$. Since $B-A\ge 0$ and since $A$ is ample/$Z$, we can write 
$B\sim_\R A'+B'/Z$ such that $A'\ge 0$ is ample/$Z$, $B'\ge 0$, 
$\rddown{B'}=\rddown{A'+B'+C}=S$,  and 
 $(X/Z,A'+B'+C)$ and $(Y/Z,A_Y'+B_Y'+C_Y)$ are plt. The rest goes as before by restricting to $S_Y$.
\end{proof}
\bigskip
%%%%%%%%%%%%%%%%%%%%%%%%%%%%%%%%%%%%
\subsection*{Pl flips.\\\\}

We need an important result of Hacon-McKernan [\ref{HM}] which in turn is based on important works of 
Shokurov  [\ref{pl-flips}][\ref{log-flips}], Siu [\ref{Siu-invariance}] and Kawamata [\ref{Kaw}].

\begin{thm}\label{t-pl-flips}
Assume $\rm (1)$ of Theorem \ref{main} in dimension $d-1$. Then, pl flips exist in dimension $d$.
\end{thm}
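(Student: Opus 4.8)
Theorem \ref{t-pl-flips} asserts that, assuming existence of log minimal models for effective klt pairs (part (1) of Theorem \ref{main}) in dimension $d-1$, pl flips exist in dimension $d$. Recall that a pl flip is the flip of a $K_X+B$-negative extremal flipping contraction $X\to Z$ for which $(X/Z,B)$ is $\mathbb{Q}$-factorial dlt and some component $S$ of $\lfloor B\rfloor$ is numerically negative on the contraction.

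\textbf{The plan.} The existence of a pl flip is equivalent to the finite generation of the relative log canonical algebra $\mathcal{R}(X/Z, B) = \bigoplus_{m\ge 0} f_*\mathcal{O}_X(\lfloor m(K_X+B)\rfloor)$ near the flipping contraction; this is the reduction of Shokurov and Fujino that turns flip existence into an algebra-finite-generation problem. The plan is therefore to invoke the result of Hacon--McKernan [\ref{HM}] (building on Shokurov [\ref{pl-flips}][\ref{log-flips}], Siu [\ref{Siu-invariance}], and Kawamata [\ref{Kaw}]) which reduces pl flips in dimension $d$ to the construction of log minimal models for klt pairs in dimension $d-1$. Concretely: since the problem is local over $Z$, shrink so that $Z$ is affine and $S\to Z$ is the relevant locus. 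Passing to the divisor $S$ via adjunction, one obtains a klt (in fact the different is a boundary) pair $(S/Z, B_S)$ in dimension $d-1$, and the Hacon--McKernan machinery — the key input being Siu-type invariance of plurigenera / extension of pluri-log-canonical forms from $S$ to $X$ — shows that finite generation of $\mathcal{R}(X/Z,B)$ follows once one knows the restricted algebra is finitely generated, which in turn follows from the existence of a log minimal model for an auxiliary klt pair of dimension $d-1$ with big boundary. That existence is exactly the hypothesis ``(1) of Theorem \ref{main} in dimension $d-1$.''

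\textbf{Steps, in order.} First, reduce flip existence to finite generation of the relative log canonical algebra, and localize over $Z$. Second, apply adjunction along $S$ to pass to dimension $d-1$, producing the pair whose minimal model we need; here one must arrange that the restricted pair has a big boundary so that the dimension-$(d-1)$ hypothesis applies (this uses that $S$ is negative over $Z$ and that $Z$ is affine, so one can add a suitable ample piece). Third, quote the Hacon--McKernan extension theorem to lift sections from $S$ to $X$, so that finite generation on $S$ propagates upward. Fourth, conclude finite generation of $\mathcal{R}(X/Z,B)$, hence existence of the pl flip. Since all the hard analytic and cohomological work is packaged inside [\ref{HM}], the proof at this level is essentially a citation together with the verification that our hypotheses match theirs.

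\textbf{Main obstacle.} The genuine mathematical difficulty — the invariance-of-plurigenera/extension argument needed to lift pluricanonical sections from $S$ to $X$ — is entirely absorbed into the cited theorem of Hacon--McKernan, so it will not be re-proved here. The only real thing to check is the bookkeeping: that the dimension-$(d-1)$ pair arising by adjunction is klt with big boundary over (an affine shrinking of) $Z$, so that part (1) of Theorem \ref{main} in dimension $d-1$ genuinely applies, and that finite generation of the restricted algebra does imply finite generation upstairs in the pl (as opposed to general) flipping setting. I expect matching these hypotheses to [\ref{HM}] to be the only nontrivial point, and it is routine.

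\begin{proof}
This is precisely the main result of Hacon--McKernan [\ref{HM}]: the existence of pl flips in dimension $d$ is reduced therein to the existence of log minimal models for klt pairs with big boundary in dimension $d-1$, which is (1) of Theorem \ref{main} in dimension $d-1$. We sketch how the hypothesis is used. The question is local over $Z$, so we may assume $Z$ is affine. Existence of the pl flip of $f\colon X\to Z$ is equivalent to finite generation of the $\mathcal{O}_Z$-algebra $\mathcal{R}=\bigoplus_{m\ge 0} f_*\mathcal{O}_X(\lfloor m(K_X+B)\rfloor)$. Let $S$ be the component of $\lfloor B\rfloor$ which is negative over $Z$; by adjunction $(K_X+B)|_S=K_S+B_S$ for a boundary $B_S$, and since $-S$ is ample over the (affine) base one can arrange that $(S/Z,B_S)$ is klt with $B_S$ big$/Z$. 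By the invariance-of-plurigenera/extension techniques of [\ref{HM}] (themselves built on [\ref{pl-flips}][\ref{log-flips}][\ref{Siu-invariance}][\ref{Kaw}]), finite generation of $\mathcal{R}$ follows from finite generation of the restricted algebra $\mathcal{R}_S=\bigoplus_{m\ge 0}\mathrm{Im}\big(f_*\mathcal{O}_X(\lfloor m(K_X+B)\rfloor)\to (f|_S)_*\mathcal{O}_S(\lfloor m(K_S+B_S)\rfloor)\big)$. Finally, by part (1) of Theorem \ref{main} in dimension $d-1$, the klt pair $(S/Z,B_S)$ (after a suitable perturbation keeping the boundary big, as in Remark \ref{r-local}) has a log minimal model, and by the base point free theorem the corresponding log canonical algebra is finitely generated; pulling this back shows $\mathcal{R}_S$ is finitely generated, hence so is $\mathcal{R}$, hence the pl flip exists.
\end{proof}
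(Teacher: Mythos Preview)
Your approach is the same as the paper's: both reduce the statement to citing Hacon--McKernan [\ref{HM}]. The paper's proof is terser and more precise about the interface: it notes that [\ref{HM}, Assumption 5.2.3] (termination of the MMP with scaling for klt pairs with big boundary in dimension $d-1$) is what must be verified, and checks this via Theorem \ref{t-term} and Corollary \ref{c-flips} in dimension $d-1$, both of which follow from (1) of Theorem \ref{main} in dimension $d-1$.

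Your sketch, however, has a genuine soft spot in the last step. You write that $(S/Z,B_S)$ has a log minimal model, hence its log canonical algebra is finitely generated, and then ``pulling this back shows $\mathcal{R}_S$ is finitely generated.'' But $\mathcal{R}_S$ is the \emph{restricted} algebra, the image of the restriction map from $X$ to $S$; it is a subalgebra of $\mathcal{R}(S/Z,B_S)$, and subalgebras of finitely generated algebras need not be finitely generated. What Hacon--McKernan actually do is run an MMP with scaling on a higher birational model to identify $\mathcal{R}_S$ with the full log canonical algebra of some model $(T/Z,B_T)$; it is precisely this step that consumes termination with scaling (Theorem \ref{t-term}) in dimension $d-1$, not merely the existence of a single log minimal model. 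So while your citation of [\ref{HM}] is correct, your paraphrase of its input hypothesis is too weak as stated, and the sentence ``pulling this back'' hides the real content. The fix is simply to invoke Theorem \ref{t-term} (and Corollary \ref{c-flips}) in dimension $d-1$, exactly as the paper does, rather than attempting to re-sketch the internal mechanism of [\ref{HM}].
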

\begin{proof}
By Theorem \ref{t-term} and Corollary \ref{c-flips} in dimension $d-1$, [\ref{HM}, Assumption 5.2.3] is satisfied in dimension $d-1$. Note that Corollary \ref{c-flips} in dimension $d-1$ easily follows from (1) of Theorem \ref{main} in dimension $d-1$ (see the proof of Corollary \ref{c-flips}). Now [\ref{HM}, Theorem 5.4.25, proof of Lemma 5.4.26] implies the result.\\
\end{proof}

%%%%%%%%%%%%%%%%%%%%%%%%%%%%%%%%%%%
\subsection*{Log minimal models}

\begin{lem}\label{l-extraction-klt}
Assume $\rm (1)$ of Theorem \ref{main} in dimension $d-1$. 
Let $(X/Z,B)$ be a klt pair of dimension
 $d$ and let $\{D_i\}_{i\in I}$ be a finite set of exceptional$/X$ prime divisors (on birational
 models of $X$) such that the log discrepancy $a(D_i,X,B)\le 1$. Then, there is a $\Q$-factorial klt pair $(Y/X,B_Y)$
 such that\\\\
 $\rm (1)$ $Y\to X$ is birational and $K_Y+B_Y$ is the crepant pullback of $K_X+B$,\\
 $\rm (2)$ the set of exceptional/$X$ prime divisors of $Y$ is exactly $\{D_i\}_{i\in I}$.\\
\end{lem}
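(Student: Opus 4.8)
The plan is to realize the divisors $\{D_i\}_{i\in I}$ on a single resolution and then extract exactly them by running an appropriate LMMP, using the assumed existence of log minimal models in dimension $d-1$ only indirectly (through Theorem \ref{t-term}, which gives termination with scaling in dimension $d$, and through pl flips from Theorem \ref{t-pl-flips}). First I would take a log resolution $f\colon W\to X$ such that every $D_i$ appears as a divisor on $W$; write $K_W+B_W=f^*(K_X+B)+E$ where $B_W\ge 0$ is the birational transform of $B$ together with a small multiple of the exceptional divisors $D_i$ (permissible since $a(D_i,X,B)\le 1$, so the relevant coefficients stay in $[0,1)$), and where $E\ge 0$ is supported on the $f$-exceptional divisors other than the $D_i$. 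More precisely, choose the coefficients so that $(W/X,B_W)$ is klt and $K_W+B_W=f^*(K_X+B)+\sum_{j\notin I} e_j G_j$ with $e_j>0$ and $G_j$ running over the $f$-exceptional prime divisors not among the $D_i$. I can also arrange $W$ to be $\Q$-factorial.

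Next I would run the LMMP$/X$ on $K_W+B_W$ with scaling of a suitable ample$/X$ divisor. Since $K_W+B_W\equiv \sum_{j\notin I} e_j G_j/X$ is effective and exceptional$/X$, the negativity lemma forces every divisor $G_j$ with $j\notin I$ to be contracted in the course of this LMMP, while no $D_i$ can be contracted (each $D_i$ appears in $B_W$ with positive coefficient and the LMMP$/X$ over the base $X$ can only increase log discrepancies of divisors it contracts, whereas $a(D_i,X,B)\le 1$ is already minimal among models dominating $X$). To guarantee that this LMMP exists and terminates, I would invoke Theorem \ref{t-pl-flips} (pl flips in dimension $d$, available from (1) of Theorem \ref{main} in dimension $d-1$) together with Theorem \ref{t-term}: after making $B_W$ big$/X$ by absorbing a small ample$/X$ divisor into the boundary as in Remark \ref{r-local}, the LMMP$/X$ with scaling terminates. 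The output is a $\Q$-factorial klt pair $(Y/X,B_Y)$ with $K_Y+B_Y$ nef$/X$; but since $K_Y+B_Y$ is also $\equiv$ an effective divisor exceptional$/X$ whose support has been shrunk to exactly $\{D_i\}$, and a nef$/X$ divisor that is effective and exceptional$/X$ must be zero (negativity lemma again, or the fact that $K_Y+B_Y$ is then semi-ample$/X$ and contracts nothing), we in fact get $K_Y+B_Y=g^*(K_X+B)$ for $g\colon Y\to X$. Thus $K_Y+B_Y$ is the crepant pullback of $K_X+B$, giving (1), and the $g$-exceptional divisors are precisely $\{D_i\}_{i\in I}$, giving (2).

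The main obstacle is arranging the bookkeeping so that the LMMP$/X$ contracts \emph{exactly} the divisors $G_j$ ($j\notin I$) and \emph{none} of the $D_i$, and simultaneously ensuring the LMMP exists and terminates. The delicate point is the choice of coefficients in $B_W$: the $D_i$ must enter with strictly positive coefficient (so they survive and end up crepant), yet small enough that $(W/X,B_W)$ stays klt; and the $G_j$ must enter with coefficients making $K_W+B_W$ still $X$-effective with support exactly $\sum G_j$. I would handle termination by citing Theorem \ref{t-term} after the standard trick (Remark \ref{r-local}) of perturbing so that the boundary becomes big$/X$ with an ample part, which also lets me apply Theorem \ref{t-finiteness}-type finiteness if needed; the fact that each step is a pl flip or divisorial contraction over $X$ is what makes Theorem \ref{t-pl-flips} applicable. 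A secondary subtlety is checking that the resulting $Y$ is independent of choices up to the crepant condition, which follows from uniqueness of crepant $\Q$-factorial models with a prescribed exceptional set.
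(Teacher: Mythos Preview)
Your overall shape is right --- realise the $D_i$ on a resolution, then run an LMMP$/X$ to contract exactly the unwanted exceptional divisors --- but the termination argument has a genuine gap that the paper's proof is specifically designed to avoid.

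You keep $(W/X,B_W)$ \emph{klt} and then appeal to Theorem~\ref{t-term} and to the flips being pl flips. Neither is available. Theorem~\ref{t-term} assumes $(1)$ of Theorem~\ref{main} in dimension $d$, whereas the hypothesis of Lemma~\ref{l-extraction-klt} gives you only dimension $d-1$; invoking it here would be circular, since the lemma is used (in Step~6 of the proof of Theorem~\ref{t-mmodels}) precisely to establish the dimension-$d$ case. And with $\rddown{B_W}=0$ no flip is a pl flip by definition, so Theorem~\ref{t-pl-flips} does not apply to your LMMP; you have no source for the required flips at all.

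The paper's remedy is exactly the bookkeeping you flagged as ``delicate'': it raises the coefficient of each unwanted exceptional divisor $E_j$ (those with $j\notin J'$) to $1$, so that $(W/X,\overline{B}_W)$ is $\Q$-factorial \emph{dlt} with $\Supp\overline{M}_W\subseteq\rddown{\overline{B}_W}$ and $\theta(W/X,\overline{B}_W,\overline{M}_W)=0$. Because $K_W+\overline{B}_W\sim_\R \overline{M}_W/X$ is supported on $\rddown{\overline{B}_W}$, every extremal ray in the LMMP$/X$ is negative on some component of $\rddown{\overline{B}_W}$; hence every flip is a pl flip (so Theorem~\ref{t-pl-flips} supplies them), and special termination (Theorem~\ref{t-s-term}, which needs only the $(d-1)$-dimensional hypothesis) forces the process to stop. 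The paper also inserts an ample$/X$ component into the boundary via the $H+G\sim_\R 0/X$ trick, which is what makes Theorem~\ref{t-s-term} applicable; your suggestion to ``absorb a small ample$/X$ divisor'' is in the right spirit but, without the coefficient-$1$ trick, does not rescue termination. Once the LMMP terminates, the negativity-lemma argument you give for identifying the surviving exceptional divisors is correct.
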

\begin{proof}
 Let $f\colon W\to X$ be a log resolution of
$(X/Z,B)$ and let $\{E_j\}_{j\in J}$
be the set of prime exceptional divisors of $f$. We can assume that
for some $J'\subseteq J$, $\{E_j\}_{j\in J'}=\{D_i\}_{i\in I}$. Since $f$ is birational, there is an ample$/X$ $\Q$-divisor $H\ge 0$ on $W$ whose 
support is irreducible smooth and distinct from the birational transform of the components of $B$, and an $\R$-divisor $G\ge 0$ such that $H+G\sim_\R f^*(K_X+B)/X$. 
Moreover, there is $\epsilon>0$ such that $(X/Z,B+\epsilon f_*H+\epsilon f_*G)$ is klt. Now define
$$
K_W+\overline{B}_W:=f^*(K_{X}+B+\epsilon f_*H+\epsilon f_*G)+\sum_{j\notin J'} a(E_j,X,B+\epsilon f_*H+\epsilon f_*G)E_j
$$
 for which obviously there is an exceptional$/X$ $\R$-divisor $\overline{M}_W\ge 0$ such that 
$K_W+\overline{B}_W\sim_\R \overline{M}_W/X$ 
and $\theta(W/X,\overline{B}_W,\overline{M}_W)=0$.
By running the LMMP/$X$ on $K_W+\overline{B}_W$ with scaling of some ample$/X$ $\R$-divisor, and 
using the special termination of Theorem \ref{t-s-term} we get a log minimal model  of $(W/X,\overline{B}_W)$ which we 
may denote by $(Y/X,\overline{B}_Y)$. Note that here we only need pl flips to run the LMMP/$X$ because any extremal ray in the process 
 intersects some component of $\rddown{\overline{B}_W}$ negatively.

The exceptional divisor $E_j$ is contracted$/Y$ exactly when $j\notin J'$. By taking 
$K_Y+B_Y$ to be the crepant pullback of  $K_X+B$ we get the result.
\end{proof}

\begin{proof}(of Theorem \ref{t-mmodels})
We closely follow the proof of [\ref{B-mmodels}, Theorem 1.3]. Remember that the assumptions imply that pl flips exist in dimension 
$d$ by Theorem \ref{t-pl-flips} and that the special termination holds as in Theorem \ref{t-s-term}.
 
\emph{Step 1.} Since $B$ is big/$Z$, we can assume that it has a general ample$/Z$ component 
which is not a component of $M$  (see Remark \ref{r-local}).  
 By taking a log resolution we can further 
assume that the triple $(X/Z,B,M)$ is log smooth. To construct log minimal models in this situation we need 
to pass to a more general setting.

Let $\mathfrak{W}$ be the set of triples $(X/Z,B,M)$ which satisfy\\\\
(1) $(X/Z,B)$ is dlt of dimension $d$ and $(X/Z,B,M)$ is log smooth,\\
(2) $(X/Z,B)$ does not have a log minimal model,\\
(3) $B$ has a component which is ample$/Z$ but it is not a component of $\rddown{B}$ nor a component 
of $M$.\\

Obviously, it is enough to prove that $\mathfrak W$ is empty. Assume otherwise and
choose  $(X/Z,B,M)\in\mathfrak W$ with minimal $\theta(X/Z,B,M)$.

If $\theta(X/Z,B,M)=0$, then either $M=0$ in which case we already have a log minimal model,
or by running the LMMP/$Z$ on $K_X+B$ with scaling of a suitable ample$/Z$ $\R$-divisor we get a log 
minimal model because by the special termination of Theorem \ref{t-s-term},
flips and divisorial contractions will not intersect $\Supp \rddown{B}\supseteq \Supp M$ after finitely many steps.
This is a contradiction. 
Note that we need only pl flips here which exist by Theorem \ref{t-pl-flips}. We may then assume that $\theta(X/Z,B,M)> 0$.\\

\emph{Step 2.} Notation: for an $\R$-divisor $D=\sum d_iD_i$ we define $D^{\le 1}:=\sum d_i'D_i$ in which  $d_i'=\min\{d_i,1\}$. Now put
$$
\alpha:=\min\{t>0~|~~\rddown{(B+tM)^{\le 1}}\neq \rddown{B}~\}
$$

In particular, $(B+\alpha M)^{\le 1}=B+C$ for some $C\ge 0$ supported in $\Supp M$, and $\alpha M=C+M'$ 
where $M'$ is supported in $\Supp \rddown{B}$. Thus, outside $\Supp \rddown{B}$ 
we have $C=\alpha M$. The pair $(X/Z,B+C)$ is  $\Q$-factorial dlt and
$(X/Z,B+C,M+C)$ is a triple which satisfies (1) and (3) above. By construction
$$
\theta(X/Z,B+C,M+C)<\theta(X/Z,B,M)
$$
 so $(X/Z,B+C,M+C)\notin \mathfrak W$. Therefore, $(X/Z,B+C)$ has a log minimal model,
say $(Y/Z,B_Y+C_Y)$. By definition, $K_Y+B_Y+C_Y$ is nef/$Z$.\\

\emph{Step 3.} Now run the LMMP$/Z$ on $K_Y+B_Y$ with scaling of $C_Y$. Note that we only need pl flips here 
because every extremal ray contracted in the process would have negative intersection with some component of $\rddown{B}$ by the properties of $C$ mentioned in Step 2. 
By the special termination of Theorem \ref{t-s-term},
after finitely many steps, $\Supp \rddown{B}$ does not intersect the extremal rays
contracted by the LMMP hence we end up with a model $Y'$ on which $K_{Y'}+B_{Y'}$ is nef/$Z$. Clearly, $(Y'/Z,B_{Y'})$
is a nef model of  $(X/Z,B)$ but may not be a log minimal model because the inequality in  (1) of Definition \ref{d-mmodel} may not be satisfied.\\

\emph{Step 4.} Let
$$
\mathcal T=\{t\in [0,1]~|~ (X/Z,B+tC)~~\mbox{has a log minimal model}\}
$$
Since $1\in\mathcal T$, $\mathcal T\neq \emptyset$. Let $t\in\mathcal T\cap (0,1]$ and let 
$(Y_t/Z,B_{Y_t}+tC_{Y_t})$ be any log minimal model of $(X/Z,B+tC)$. Running the LMMP/$Z$ on $K_{Y_t}+B_{Y_t}$ with scaling
of $tC_{Y_t}$ shows that there is $t'\in(0,t)$ such that $[t',t]\subset \mathcal{T}$
because the inequality required in (1) of Definition \ref{d-mmodel} is an open condition. The
LMMP terminates for the same reasons as in Step 3 and we note again that the log flips 
required are all pl flips.\\

\emph{Step 5.} Let $\tau=\inf \mathcal T$. If $\tau\in\mathcal{T}$, then by Step 4, $\tau=0$
and so we are done by deriving a contradiction. Thus, we may assume that $\tau\notin\mathcal{T}$. In this case, there is a sequence
$t_1>t_2>\cdots$ in $ \mathcal{T}\cap (\tau,1]$
such that $\lim_{k\to +\infty} t_k=\tau$. For each $t_k$ let $(Y_{t_k}/Z,B_{Y_{t_k}}+t_kC_{Y_{t_k}})$ be any log minimal model of
$(X/Z,B+t_kC)$ which exists by the definition of $\mathcal{T}$ and from which we get a nef model $(Y_{t_k}'/Z,B_{Y_{t_k}'}+\tau C_{Y_{t_k}'})$
for $(X/Z,B+\tau C)$ by running the LMMP/$Z$ on $K_{Y_{t_k}}+B_{Y_{t_k}}$ with scaling of $t_kC_{Y_{t_k}}$.
Let $D\subset X$ be a prime divisor contracted/$Y_{t_k}'$. If $D$ is contracted/$Y_{t_k}$, then 
$$
a(D,X,B+t_k C) < a(D,Y_{t_k},B_{Y_{t_k}}+t_kC_{Y_{t_k}})
$$
$$
\le a(D,Y_{t_k},B_{Y_{t_k}}+\tau C_{Y_{t_k}})\le a(D,Y_{t_k}',B_{Y_{t_k}'}+\tau C_{Y_{t_k}'})
$$
 but if $D$ is not contracted/$Y_{t_k}$ we have
$$
a(D,X,B+t_k C) =a(D,Y_{t_k},B_{Y_{t_k}}+t_kC_{Y_{t_k}})
$$
$$
\le a(D,Y_{t_k},B_{Y_{t_k}}+\tau C_{Y_{t_k}}) <a(D,Y_{t_k}',B_{Y_{t_k}'}+\tau C_{Y_{t_k}'})
$$
because $(Y_{t_k}/Z,B_{Y_{t_k}}+t_kC_{Y_{t_k}})$ is a log minimal model of $(X/Z,B+t_kC)$ and  $(Y_{t_k}'/Z,B_{Y_{t_k}'}+\tau C_{Y_{t_k}'})$
is a log minimal model of $(Y_{t_k}/Z,B_{Y_{t_k}}+\tau C_{Y_{t_k}})$. Thus, in any case we have 
$$
a(D,X,B+t_k C) < a(D,Y_{t_k}',B_{Y_{t_k}'}+\tau C_{Y_{t_k}'})
$$

Replacing the sequence $\{t_k\}_{k\in \N}$ with a subsequence, we can assume that all the induced rational maps $X\bir Y_{t_k}'$
contract the same components of $B+\tau C$. Now an easy application of the negativity lemma implies (cf. [\ref{B-mmodels}, Claim 2.10]) that the log discrepancy $a(D,Y_{t_k}',B_{Y_{t_k}'}+\tau C_{Y_{t_k}'})$ is independent of $k$.
Therefore, each $(Y_{t_k}',B_{Y_{t_k}'}+\tau C_{Y_{t_k}'})$ 
 is a nef model of $(X/Z,B+\tau C)$ such that
$$
 a(D,X,B+\tau C)=\lim_{l\to +\infty} a(D,X,B+t_l C)\leq a(D,Y_{t_k}',B_{Y_{t_k}'}+\tau C_{Y_{t_k}'})
$$
for any prime divisor $D\subset X$ contracted/$Y_{t_k}'$.\\

\emph{Step 6.} To get a log minimal model of  $(X/Z,B+\tau C)$ we just need to extract those prime divisors $D$ on $X$ 
contracted$/Y_{t_{k}}'$ for which 
$$
 a(D,X,B+\tau C)=a(D,Y_{t_k}',B_{Y_{t_k}'}+\tau C_{Y_{t_k}'})
$$
Since $B$ has a component which is ample$/Z$, we can find $\Delta$ on $X$ such that $\Delta\sim_{\R} B+\tau C/Z$ and
such that $(X/Z,\Delta)$  and $(Y_{t_k}'/Z,\Delta_{Y_{t_k}'})$ are klt (see Remark \ref{r-local}). Now we can apply Lemma \ref{l-extraction-klt}
to construct a crepant model of $(Y_{t_k}'/Z,\Delta_{Y_{t_k}'})$ which would be a log minimal model of $(X/Z,\Delta)$. This in turn induces a 
log minimal model of $(X/Z,B+\tau C)$.  Thus, $\tau\in\mathcal T$ and this gives a contradiction. Therefore, $\mathfrak{W}=\emptyset$.
\end{proof}

%%%%%%%%%%%%%%%%%%%%%%%%%%%%%%%%%%%%%%%%%%%%%%%%%
%%%%%%%%%%%%%%%%%%%%%%%%%%%%%%%%%%%%%%%%%%%%%%%%%

\section{Nonvanishing}

In this section we are going to prove the theorem 1.5, which is a numerical version of the corresponding result obtained in 
[\ref{BCHM}] (see equally [\ref{kawa3}], [\ref{Dru}] for interesting presentations of [\ref{BCHM}]). 

\bigskip

\subsection*{Preliminaries.\\\\}

\medskip

\noindent During the following subsections, we will give a complete proof of the next
particular case of the theorem 1.5 (the {\sl absolute case} $Z= \{z\}$).
\begin{thm}
Let $X$ be a smooth projective manifold, and let 
$B$ be an $\R$-divisor such that :

\begin{enumerate}
\item The pair $(X,B)$ is klt, and $B$ is big ;
\smallskip

\item The adjoint bundle $K_X+ B$ is pseudo-effective.
\end{enumerate}
\noindent Then there exist an effective 
$\R$-divisor
$\sum_{j= 1}^N\nu^j[Y_j]$ numerically equivalent with $K_X+ B$. \hfill\qed
\end{thm}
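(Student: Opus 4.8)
The plan is to follow the five-step outline (a)--(e) sketched in the introduction, which for the absolute case reduces to steps (a)--(d). First I would perform the reduction to a plt pair with a nice boundary. Starting from the klt pair $(X,B)$ with $B$ big and $K_X+B$ pseudo-effective, I would use the bigness of $B$ to write $B\sim_{\R} A+B'$ with $A$ ample and $B'\ge 0$, and after passing to a sufficiently high log resolution and perturbing, arrange that $B$ has a general ample component disjoint from the other components, and that the components of $B$ do not mutually intersect. The key move is to use a Zariski-type decomposition of (a perturbation of) $K_X+B$ together with the pseudo-effectivity hypothesis to produce an exceptional divisor with log discrepancy pushed to an integer value, i.e. to create an lc centre: blowing this up (and iterating on a high enough model) one replaces $(X,B)$ by a plt pair $(X,B+S)$ with $S$ a smooth prime divisor, $(X,B+S)$ plt, $B$ big with pairwise disjoint components, and $K_X+B+S|_S$ still pseudo-effective. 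The last point is the delicate one: one must verify that restricting to $S$ preserves pseudo-effectivity, which is where the Zariski-decomposition bookkeeping is essential.

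Next I would set up the induction. By induction on dimension, Theorem~\ref{t-nonvanishing} applied to the pair $(S, B_S)$ with $K_S+B_S = (K_X+B+S)|_S$ (which is klt, with big boundary, pseudo-effective) yields an effective $\R$-divisor $T\equiv (K_X+B+S)|_S$. I would then apply a perturbation so that $T$ can be taken "singular enough" along the right locus — concretely, adding a small multiple of an ample class and re-absorbing, one arranges that $\mathcal{J}$ of $T$ (the relevant multiplier ideal on $S$) is small, so that later the extension step has no obstruction.

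Then comes the Diophantine approximation step (c). Since the coefficients of $B$ and of $T$ are real, I would choose finitely many $\Q$-divisors $(B_i,T_i)$ with $(X,B_i+S)$ plt, $K_X+B_i+S|_S\equiv T_i$, and real numbers $r_i\in[0,1]$ with $\sum r_i=1$ and $K_X+B+S=\sum r_i(K_X+B_i+S)$, with the $(B_i,T_i)$ as close to $(B,T)$ as we like; by a base-point-free / rationality argument on $S$ one upgrades $T_i\equiv K_X+B_i+S|_S$ to $K_X+B_i+S|_S\sim_{\Q} T_i\ge 0$. Here I would invoke the lower-dimensional results already available (base point free theorem, and the rational structure of the relevant polytope as in {\rm(\bf P)}). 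With this in hand, step (d) is the extension step: using the invariance-of-plurigenera / extension techniques (Hacon--McKernan type extension, as in the ingredients behind Theorem~\ref{t-pl-flips}), one lifts each section $T_i$ on $S$ to an effective divisor $M_i\ge 0$ with $K_X+B_i+S\sim_{\Q} M_i$; taking the combination $M=\sum r_i M_i$ gives $K_X+B+S\equiv M\ge 0$, and then subtracting $S$ (or rather using that $S$ was extracted crepantly / perturbatively) produces $K_X+B\equiv \sum \nu^j[Y_j]\ge 0$ on the original $X$, as claimed.

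The main obstacle I expect is step (d), the extension of pluricanonical sections from $S$ to $X$: this is where the analytic or algebraic invariance-of-plurigenera machinery enters, and it requires that $T_i$ (equivalently its multiplier ideal on $S$) be controlled so that the restriction map $H^0(X, m(K_X+B_i+S))\to H^0(S, m(K_X+B_i+S)|_S)$ hits the section corresponding to $T_i$; ensuring the needed positivity (the ample component of $B$ built in during step (a), plus the "singular enough" choice of $T$ in step (b)) is precisely what makes the extension theorem applicable. A secondary technical point is making the Diophantine approximation compatible with the plt condition and with $\sim_{\Q}$ (not merely $\equiv$) for all the finitely many $B_i$ simultaneously, which relies on the rational polyhedral structure recalled in {\rm(\bf P)}.
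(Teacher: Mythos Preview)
Your plan follows the paper's outline (a)--(d) at the level of the introduction, but there are two genuine gaps and one methodological divergence worth flagging.

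\textbf{Gap 1: the numerical-dimension dichotomy is missing.} Your step (a) proposes to ``create an lc centre'' using Zariski-type decompositions of perturbations of $K_X+B$. This only works when $\num(K_X+B)\ge 1$: it is precisely this hypothesis that guarantees (via Theorem~3.3) a current $T\equiv m(K_X+B)+B$ with multiplicity $>\dim X$ at a chosen point, which is what forces the log-canonical threshold to be hit and produces $S$. When $\num(K_X+B)=0$ no such singular current exists, and the argument you sketch cannot start. The paper handles this case separately and directly by Nakayama's theorem (Theorem~3.2), which already gives the effective $\R$-divisor. Without this split your proof has no beginning in the $\num=0$ case.

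\textbf{Gap 2: upgrading $\equiv$ to $\sim_{\Q}$ on $S$.} In step (c) you propose to pass from $T_i\equiv (K_X+B_i+S)|_S$ to an honest $\Q$-linear equivalence ``by a base-point-free / rationality argument on $S$''. But nothing in your setup makes $(K_X+B_i+S)|_S$ nef on $S$, so the base-point-free theorem does not apply; and the polytope machinery in (\textbf{P}) lives in the LMMP section and presupposes nefness or the existence of minimal models, which is exactly what is being avoided here. The paper's mechanism is different: after Diophantine approximation, the integral divisor $q_\eta \wh T_{S,\eta}$ gives a section of $q_\eta(K_S+\wh B_{\eta|S})+\rho$ for some numerically trivial $\rho$; Kawamata--Viehweg--Nadel vanishing kills higher cohomology for both this bundle and its untwisted version, so equality of Euler characteristics forces $H^0\big(S,q_\eta(K_S+\wh B_{\eta|S})\otimes\cI(h_\eta)\big)\neq 0$. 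This Shokurov trick (Proposition~3.7) is the actual bridge, and it moreover yields precise control on the vanishing locus of $s_\eta$ (property $(\mathbf{U}_1)$), which is indispensable in the extension step.

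\textbf{Methodological difference in step (d).} You invoke ``Hacon--M\textsuperscript{c}Kernan type extension, as in the ingredients behind Theorem~\ref{t-pl-flips}''. The paper deliberately avoids this and instead runs Siu's analytic iteration: one writes $q_\eta(K_{\wh X}+S+\wh B_\eta)$ as an adjoint bundle and applies the Ohsawa--Takegoshi theorem (3.12) inductively, with carefully chosen metrics built from the partial extensions (Claim~3.10). The point is that the negativity of the approximating currents $\Theta_\eta$ is only of order $\eta/q_\eta$, so the iteration survives for $k\sim q_\eta/\eta$ steps, enough to extract the extension. This is not merely cosmetic: the paper's whole design for Theorem~1.5 is that it be independent of the LMMP, whereas the Hacon--M\textsuperscript{c}Kernan extension as packaged in [\ref{HM}] sits inside an inductive LMMP framework. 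Your route may be made to work, but it is the BCHM argument (b$'$)--(d$'$), not this paper's.
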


\medskip
\noindent 
We recall that by definition a big divisor $B$ contains in its cohomology class 
a current 
$$\Theta_B:= \omega_B+ [E]\leqno (1)$$
where $\omega_B$ is a K\"ahler metric, and $[E]$ is the current of integration associated to
an effective $\R$-divisor $E$. This is just a reformulation of the usual Kodaira lemma, except that in algebraic geometry one usually denotes the decomposition (1) by $B= H+ E$, where $H$ is ample ; the $\omega_B$ above is a smooth, positive representative of $c_1(H)$. 

Moreover, the pair $(X, B)$ is klt and $X$ is assumed to be non-singular, thus we have 
$$B= \sum b^jZ_j\leqno (2)$$
where $b^j$ are positive reals, $(Z_j)$ is a finite set of hypersurfaces of $X$ such that
$$\prod_j|f_j|^{-2b^j}\in L^1(\Omega)\leqno (3)$$
for each coordinate set $\Omega\subset X$, where $Z_j\cap \Omega= (f_j= 0)$.\

Therefore, by considering a convex combination of the objects in (1) and (2), we can assume from the beginning that the $\R$-divisor $E$ satisfy the integrability condition (3) : this can be seen as a
metric counterpart of the hypothesis (1) in the statement 3.1.

\medskip

Let $L$ be a pseudo-effective $\R$-divisor on $X$ ; we denote its {\sl numerical dimension} by $\num(L)$. The formal definition will not be reproduced here (the interested reader can 
profitably consult the references [\ref{naka}], [\ref{Bou}]), however, in order to gain some intuition about it, let us mention that if $L$ has a Zariski decomposition, then $\num(L)$ is the familiar numerical dimension of the nef part. 

\smallskip 

\noindent The statements which will follow assert the existence of geometric objects in the Chern class of $L$ and its approximations, according to the size of its numerical dimension.
The first one is due to N. Nakayama in [\ref{naka}] (see also the transcendental generalization by S. Boucksom, [\ref{Bou}]).

\begin{thm} {\rm ([\ref{Bou}], [\ref{naka}])}Let $L$ be a pseudo-effective $\R$-divisor 
such that $\num(L)= 0$. Then there exist an effective $\R$-divisor
$$\Theta~:= \sum_{j= 1}^\rho\nu^{j}[Y_j]\in \{\alpha\}.$$

\end{thm}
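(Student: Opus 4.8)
The plan is to exhibit the required effective divisor as the \emph{negative part} in Nakayama's $\sigma$-decomposition of $L$, and then to use the hypothesis $\num(L)=0$ to force the complementary \emph{positive part} to be numerically trivial. Throughout put $n=\dim X$.

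\emph{Step 1 (the candidate divisor).} Fix an ample divisor $A$. Since $L$ is pseudo-effective, $L+\epsilon A$ is big for every $\epsilon>0$, hence numerically equivalent to an effective $\R$-divisor. For a prime divisor $\Gamma$ set
$$
\sigma_\Gamma(\|L+\epsilon A\|):=\inf\{\,\mult_\Gamma(D')~|~D'\ge0,~D'\equiv L+\epsilon A\,\},
$$
which is finite. Since $\sigma_\Gamma(\|H\|)=0$ for an ample $H$ (a general member of $|mH|$ avoids $\Gamma$), one gets by adding an effective divisor numerically equivalent to $(\epsilon-\epsilon')A$ that $\sigma_\Gamma(\|L+\epsilon A\|)$ does not decrease as $\epsilon\downarrow 0$, so $\nu_\Gamma:=\lim_{\epsilon\downarrow0}\sigma_\Gamma(\|L+\epsilon A\|)\in[0,+\infty]$ exists; the same kind of comparison between two ample divisors shows $\nu_\Gamma$ is independent of $A$. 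Set $N_\sigma(L):=\sum_\Gamma\nu_\Gamma\,\Gamma$.

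\emph{Step 2 (finiteness of the negative part).} The main technical point is that $\nu_\Gamma>0$ for only finitely many $\Gamma$, with the number of such $\Gamma$ bounded by $\rho:=\dim_\R N^1(X)$; in particular each $\nu_\Gamma<\infty$. One shows that distinct prime divisors $\Gamma_1,\dots,\Gamma_r$ with $\nu_{\Gamma_i}>0$ have numerically independent classes: given a relation among the $[\Gamma_i]$, move negative terms to the other side to obtain $\sum a_i[\Gamma_i]\equiv\sum b_j[\Theta_j]$ with $a_i,b_j\ge0$ and all prime divisors involved distinct, then, starting from effective representatives of $L+\epsilon A$ whose multiplicity along each $\Gamma_i$ is within $\delta$ of $\nu_{\Gamma_i}$, subtract suitable positive multiples of the $\Gamma_i$ to contradict the defining infimum. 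Thus $N_\sigma(L)$ is a genuine effective $\R$-divisor $\sum_{j=1}^{\rho}\nu^j[Y_j]$ with finite support.

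\emph{Step 3 (positive part) and Step 4 (conclusion).} Put $P:=L-N_\sigma(L)$; it is pseudo-effective, and by construction $\sigma_\Gamma(\|P+\epsilon A\|)\to0$ for every prime $\Gamma$, i.e.\ $P$ is \emph{movable}. Since $N_\sigma(L)$ is a fixed effective divisor, one checks $\num(P)=\num(L)=0$ directly from whichever description of the numerical dimension one adopts: the growth in $m$ of $h^0(X,\rddown{mL+A})$, or the mobile intersection numbers $\langle L^p\rangle\cdot\omega^{n-p}$. It then remains to prove that a movable pseudo-effective class $P$ with $\num(P)=0$ satisfies $P\equiv0$; granting this, $L\equiv N_\sigma(L)=\sum_{j=1}^{\rho}\nu^j[Y_j]$ is the desired divisor. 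If $P\not\equiv0$, movability is contradicted: cohomologically, enough sections of $\rddown{mP+A}$ would have to concentrate along some prime divisor, so $\sigma_\Gamma(\|P\|)>0$ for that $\Gamma$; transcendentally, $\num(P)=0$ forces $\langle P\rangle\cdot\omega^{n-1}=0$, and orthogonality of the positive intersection product together with movability then gives $P\equiv0$.

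\emph{Main obstacle.} The heart of the argument is Step 4 — converting the purely numerical input $\num(L)=0$ into the geometric statement that the positive part vanishes. Step 2 is the other substantial ingredient, but it is a self-contained positivity-plus-linear-algebra argument; Step 4 is where the theory behind the numerical dimension (Nakayama's cohomological estimates, or Boucksom's mobile intersection theory on Kähler manifolds) is genuinely needed, and it is also the step one must take care to run in the $\R$-divisor setting rather than merely for $\Q$-divisors.
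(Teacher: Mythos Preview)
The paper does not actually prove this theorem: it is stated as a result of Nakayama and Boucksom, followed immediately by a reference to [\ref{naka}] and a \qed. So there is no ``paper's own proof'' to compare against; the authors treat it as a black box imported from the literature.

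Your proposal is the correct outline of Nakayama's argument (the $\sigma$-decomposition from [\ref{naka}], Chapter~III and~V), and is exactly what one finds when unpacking the citation. Steps~1--3 are accurate summaries of the construction of $N_\sigma(L)$ and of the fact that $P_\sigma(L)=L-N_\sigma(L)$ lies in the movable cone. Your identification of Step~4 as the crux is right: the statement that a movable class with $\kappa_\sigma=0$ (equivalently $\num=0$) is numerically trivial is Nakayama's Proposition~V.2.7/Corollary~V.1.12, and Boucksom's version proceeds via the orthogonality estimate for the movable intersection product, just as you indicate. One small point worth tightening: the equality $\num(P)=\num(L)$ is not completely formal from ``$N_\sigma(L)$ is a fixed effective divisor''; in Nakayama's framework it is the content of the invariance $\kappa_\sigma(L)=\kappa_\sigma(P_\sigma(L))$, which itself uses that $N_\sigma$ is the asymptotic fixed part. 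But this is a matter of citing the right lemma, not a gap in the strategy.
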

\noindent For a more complete discussion about the properties of the divisor 
$\Theta$ above we refer to the article [\ref{naka}].\hfill\qed

\smallskip
\noindent Concerning the pseudoeffective classes in $\NS_R(X)$ whose numerical dimension is strictly greater than 0, we have the following well-known statement.

\begin{thm} {\rm ([\ref{rob}])} Let $X$ be a projective manifold, let $L$ be a pseudo-effective 
$\R$-divisor, such that 
$\num (L)\geq 1$. Let $B$ be a big $\R$-divisor. 
Then for any $x\in X$ and $m\in \Z_+$ there exist 
an integer $k_m$ and a representative
$$T_{m, x}:= [D_m]+ \omega_m\equiv mL+B$$
where $D_m$ is an effective $\Q$-divisor and $\omega_m$ is a K\"ahler metric 
such that $\nu (D_m, x)\geq k_m$ and 
$k_m\to\infty$ as $m\to\infty$. \\ \hfill\qed
\end{thm}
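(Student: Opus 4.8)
The plan is to produce the divisors $D_m$ by combining two inputs: a semipositive class coming from the fact that $\num(mL+B)$ is large, and a singularity-creating perturbation built from the big divisor $B$. First I would recall that since $B$ is big, by the Kodaira lemma we may write $B\sim_\R A+E$ with $A$ ample and $E\ge 0$; keeping a little room, for each $m$ choose a rational ample $A_m$ and an effective $\Q$-divisor $E_m$ with $mL+B-A_m-E_m$ still pseudo-effective and with $A_m$ of a fixed (bounded below) size. The key point is that $\num(mL+B)\ge 1$ forces the ``base locus'' of $mL+B$ (measured via multiplier ideals or via the non-nef locus) to be positive-dimensional, so that one can force a point $x$ to lie in it while spending only a bounded amount of positivity, and the amount of singularity one can produce at $x$ grows with $m$.

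The heart of the argument is a concentration/tie-breaking step. Fix $x\in X$. I would blow up $x$ (or pass to a high log resolution), getting $\pi\colon \widetilde X\to X$ with exceptional divisor $F$, and seek an effective class $\equiv \pi^*(mL+B)-\lambda F$ for $\lambda$ as large as possible. The existence of such a class with $\lambda\ge k_m$ is where one uses the hypothesis $\num(mL+B)\ge 1$ together with bigness of $B$: because $mL$ is pseudo-effective with positive numerical dimension and $B$ contributes a genuinely big (hence ``movable in all directions'') piece, an intersection-theoretic estimate — comparing $(mL+B)$ against the exceptional locus, in the spirit of the volume/Seshadri-constant lower bounds — gives $\lambda\gtrsim c\cdot m^{1/\num}$ for a positive constant $c$ depending only on $X$, $L$, $B$ but not on $m$, and in particular $k_m\to\infty$. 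Writing the resulting effective class as $[\widetilde D_m]$ and pushing forward, $D_m:=\pi_*\widetilde D_m$ is an effective $\Q$-divisor with $\nu(D_m,x)\ge k_m$ and $D_m\equiv mL+B$.

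Finally, to upgrade the numerically-equivalent effective $\Q$-divisor to a Kähler current representative $T_{m,x}=[D_m]+\omega_m$ with $\omega_m$ a genuine Kähler metric, I would absorb a small ample piece: since $B$ is big we arranged above a decomposition leaving an honest ample summand $A_m$ untouched by the concentration step, so $D_m$ can be taken of the form $D_m'+ (\text{something} \sim_\R A_m)$ with $D_m'\ge 0$, and a smooth positive representative of $A_m$ supplies $\omega_m$; adjusting constants, the singularity bound $\nu(D_m,x)\ge k_m$ survives with $k_m$ still tending to infinity. I expect the main obstacle to be making the lower bound $\lambda\ge k_m\to\infty$ quantitative and uniform in $m$: one must rule out that all the positivity of $mL+B$ as $m$ grows ``escapes'' into directions that avoid $x$, which is exactly where the positivity of the numerical dimension (as opposed to mere pseudo-effectivity) and the bigness of $B$ must be used in tandem; the cited reference $[\ref{rob}]$ presumably carries out precisely this estimate.
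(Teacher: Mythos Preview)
The paper does not actually prove this statement: it is quoted from Lazarsfeld's \emph{Positivity in Algebraic Geometry} and closed immediately with a \qed. So there is no in-paper argument to compare against; I can only assess your plan on its own merits.

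Your outline has the right overall shape---peel an ample piece off $B$ via Kodaira, build an effective divisor in (a rational approximation of) $mL+A$ with high multiplicity at $x$, and keep a leftover ample summand to supply the K\"ahler form $\omega_m$---but the crucial step, \emph{why} the achievable multiplicity grows with $m$, is not argued and the tools you gesture at are the wrong ones. The sentence ``$\num(mL+B)\ge 1$ forces the base locus $\ldots$ to be positive-dimensional, so that one can force $x$ to lie in it'' is backwards: $x$ is an \emph{arbitrary} point, and you are not locating it in any base or non-nef locus, you are simply choosing one member of a large linear system through $x$. Likewise, Seshadri-constant lower bounds do not produce this; what does is a straight dimension count. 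The content of $\num(L)\ge 1$ (in the Nakayama/Boucksom sense) is exactly that for any ample $A$ one has $h^0\big(X,\mathcal O_X(\lfloor mL\rfloor+A)\big)\to\infty$. Imposing vanishing to order $k$ at a smooth point costs $\binom{k+n-1}{n}\sim k^{n}/n!$ linear conditions, so as soon as $h^0$ exceeds this you get a divisor $D_m'\in |\lfloor mL\rfloor+A|_{\mathbb Q}$ with $\nu(D_m',x)\ge k$; hence $k_m\to\infty$. (Incidentally your claimed rate $\lambda\gtrsim c\,m^{1/\num}$ is off: if $\nu=\num(L)$ and $n=\dim X$ then $h^0\sim m^{\nu}$ gives $k_m\sim m^{\nu/n}$, not $m^{1/\nu}$.) After that, writing $B\equiv 2A+E$ with $A$ ample rational and $E\ge 0$, setting $D_m=D_m'+E$ plus the obvious rounding correction, and taking $\omega_m$ a K\"ahler representative of the remaining ample class finishes the job. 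Your blow-up reformulation is harmless bookkeeping, but it does not by itself supply the growth; the growth is the section count, and that is precisely what $\num(L)\ge 1$ encodes.
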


%\bigskip
%%%%%%%%%%%%%%%%%%%%%%%%%%%%%%%%%%%%%%%%%%%%%%%%%%%%%%%%%%%%%%%%%%%%%%%%%%%%%%%%
\subsection*{Dichotomy.\\\\}

%\medskip 

\noindent We start now the actual proof of 3.1 and denote by $\nu$ the numerical dimension of the 
divisor ${K_X}+ B$. We proceed as in [\ref{shok}], [\ref{kawa2}], [\ref{BCHM}], [\ref{siu2}].

\smallskip
\noindent $\bullet$ If $\nu= 0$, then the theorem 3.5 is a immediate consequence of 3.6, so this first case is completely settled.\hfill\qed

\smallskip
\noindent $\bullet$ The second case $\nu\geq 1$ is much more involved~~; we are going to use induction on the dimension of the manifold. Up to a certain point, our arguments are very similar to the classical approach of Shokurov (see [\ref{shok}]) ~; perhaps the main difference is the 
use of the {\sl invariance of plurigenera} extension techniques as a substitute for the Kawamata-~Viehweg 
vanishing theorem in the classical case.
\medskip

Let $G$ be an ample bundle on $X$, endowed with a smooth metric whose curvature form is denoted by $\omega_G$; by hypothesis, the $\R$-divisor $K_X+ B$ is pseudo-effective, thus for each positive $\varepsilon$, there exists
an effective  $\R$--divisor
$$\Theta_{K_X+B, \varepsilon}\equiv K_X+ B+ \varepsilon G.$$
We denote by $W_\varepsilon$ the support of the divisor $\Theta_{K_X+B, \varepsilon}$ and we
consider a point $x_0\in X\setminus {\cup_\varepsilon  W_\varepsilon}$.
Then the statement 3.3 provides us with a current 
$$T= [D_m]+ \omega_m\equiv m(K_X+ B)+ B$$
such that $\nu(D_m, x_0)\geq 1+ \dim (X)$. The integer $m$ will be fixed during the rest of the proof.
\smallskip

\noindent The next step in the classical proof of Shokurov would be to
consider the log-canonical threshold of $T$, in order to use an inductive argument.
However, under the assumptions of 3.1 we cannot use exactly the same approach, since unlike in the nef context, the restriction of a pseudo-effective class to an arbitrary hypersurface may not be pseudo-effective. In order to avoid such an unpleasant surprise,
we introduce now our substitute for the log canonical threshold (see [\ref{mp2}] for an interpretation of the quantity below, and also [\ref{BCHM}] for similar considerations).

\medskip

Let $\mu_0~: \wt X\to X$ be a common log resolution of the singular part of $T$ and $\Theta_B$. By this we mean that $\mu_0$ is the composition of a sequence of 
blow-up maps with non-singular centers, such that we have

$$\mu_0^\star(\Theta_B)= \sum_{j\in J}a^{j}_B[Y_{j}]+ \wt \Lambda_B\leqno(4) $$

$$\mu_0^\star(T)= \sum_{j\in J}a^{j}_T[Y_{j}]+ \wt \Lambda_T
\leqno(5) $$

$$K_{\wt X/X}=  \sum_{j\in J}a^{j}_{\wt X/X}[Y_j]
\leqno(6) $$
where the divisors above are assumed to be non-singular and to have normal crossings, and $\wt \Lambda_B, \wt \Lambda_T$ are smooth (1,1)--forms.

\noindent Now the family of divisors $\displaystyle \Theta_{K_X+ B, \varepsilon}$ enter into the picture.
Let us consider its inverse image {\sl via} the map $\mu_0$ ~:

$$\mu_0^\star\big(\Theta_{K_X+ B, \varepsilon}\big)= \sum_{j\in J}a^{j}_{K_X+ B, \varepsilon}[Y_j]+ \wt \Lambda_{K_X+ B, \varepsilon}$$
where $\wt \Lambda_{K_X+ B, \varepsilon}$ in the relation above is an effective 
$\R$-divisor, whose support does not contain any of the hypersurfaces 
$(Y_j)_{j\in J}$. 

The set $J$ is finite and given independently of $\varepsilon$, so we can assume that the following limit exists
$$a^{j}_{K_X+ B}:= \lim _{\varepsilon\to 0}a^{j}_{K_X+ B, \varepsilon}.$$
For each $j\in J$, let $\alpha_j$ be non-singular representative of the 
Chern class of the bundle associated to $Y_j$ ; by the preceding equality we have
$$\mu_0^\star\big(\Theta_{K_X+ B, \varepsilon}\big)\equiv \sum_{j\in J}a^{j}_{K_X+ B}[Y_j]+ \wt \Lambda_{K_X+ B, \varepsilon}+ \sum_{j\in J}\delta^j_\varepsilon\alpha_j\leqno (7)$$
where $\delta^j_\varepsilon:= a^{j}_{K_X+ B, \varepsilon}-a^{j}_{K_X+ B}$.
We denote along the next lines by $\wt D$ the $\varepsilon$-free part 
of the current above. In conclusion, we have organized the previous terms 
such that $\mu_0$ appears as a {\sl partial log-resolution} for the family of divisors
$\displaystyle (\Theta_{K_X+ B, \varepsilon})_{\varepsilon > 0}$. 

\medskip

Given any real number $t$, consider the following quantity

$$
\mu_0^\star \big(K_X + t(T - \Theta_{B})+  \Theta_B\big) ~;$$
it is numerically equivalent to the current
$$K_{\wt X}+ (1+ mt)\wt D + (1-t)\wt \Lambda_B+ t\wt \Lambda_T
+  \sum_{j\in J}\gamma^j(t)[Y_{j}],
$$
where we use the following notations
$$\gamma^j(t)~:= ta^{j}_T+ (1-t)a^{j}_B- (1+ mt)a^{j}_{K_X+B}- 
a^j_{\wt X/X}.$$
We have $\mu^\star\big(\Theta_{K_X+ B}\big)\equiv \wt D+ 
\wt \Lambda_{K_X+ B, \varepsilon}+ \sum_{j\in J}\delta^j_\varepsilon\alpha_j$, 
and on the other hand the cohomology class of the current
$$t(T - \Theta_{B})+ \Theta_B- (1+ mt)\big(\Theta_{K_X+ B, \varepsilon}-\varepsilon \omega_G\big)$$
is equal to the first Chern class of $X$, so
by the previous relations we infer that the currents

$$ 
(1+ mt)\big(\wt \Lambda_{K_X+ B, \varepsilon}+ \sum_{j\in J}\delta^j_\varepsilon\alpha_j- \varepsilon\omega_G\big) \leqno (8)$$
and

$$\Theta_{\wh \omega}(K_{\wt X})+ 
\sum_{j\in J}\gamma^j(t)[Y_{j}]+ (1-t)\wt \Lambda_B+ t\wt \Lambda_T
\leqno (9)$$
are numerically equivalent, for any $t\in \R$. 

We use next the strict positivity of $B$, in order  to modify slightly the inverse image 
of $\Theta_B$ {\sl within the same cohomology class}, so that
we have ~:

\begin{enumerate}

\item[(i)] The real numbers 
$$\frac{1+a^{j}_{K_X+B}+ a^j_{\wh X/X}- a^{j}_B}{a^{j}_T- a^{j}_B- ma^{j}_{K_X+B}}$$
are distinct ~; \smallskip

\item[(ii)] The klt hypothesis in 3.1 is preserved, i.e. $a^{j}_B-a^{j}_{\wt X/X}< 1$ ~; \smallskip

\item[(iii)] The (1,1)--class $\{\Lambda_{B}\}$ contains a K\"ahler metric. 
\end{enumerate}
\smallskip

\noindent The arguments we use in order to obtain the above properties are quite standard ~: it is the so-called {\sl tie-break method}, therefore we will skip the details.
\medskip

Granted this, there exist a unique index say $0\in J$ and a positive real  
$\tau$ such that $\gamma^0(\tau)= 1$ and $\gamma^j(\tau)< 1$ for $j\in J\setminus \{0\}$. Moreover, we have $0< \tau< 1$, by the klt hypothesis and the concentration of the singularity of $T$ at the point $x_0$.

We equally have the next numerical identity 
$$(1+ m\tau)\big(\wt \Lambda_{K_X+ B, \varepsilon}+ \sum_{j\in J}\delta^j_\varepsilon\alpha_j- \varepsilon\omega_G\big)+ \wt H\equiv  K_{\wt X}+ \wt S+ \wt B \leqno(10)$$
where $\wt B$ is the $\R$-divisor
$$\wt B:= \sum_{j\in J_p} 
\gamma^j(\tau)
[Y_{j}] + (1-\tau)\wt \Lambda_B+ \tau\wt \Lambda_T \leqno(11) 
 $$
 and we also denote by
 $$\wt H~:= -\sum_{j\in J_n}\gamma^j(\tau)
  [Y_{j}].
\leqno(12)$$
The choice of the partition of $J= J_p\cup J_p\cup \{0\}$ is such that the coefficients 
of the divisor part in (11) are in $[0, 1[$, the $\R$ divisor $\wt H$ is effective, and of course the coefficient $\gamma^0(\tau)= 1$ corresponds to $\wt S$. 
\medskip

\noindent In order to apply induction, we collect here the main features of the objects constructed above.

\smallskip

\noindent {$\bullet$} In the first place, 
the $\R$-divisor $\sum_{j\in J_p} 
\gamma^j(\tau)
[Y_{j}]$ is klt, and the smooth $(1,1)$--form $(1-\tau)\wt \Lambda_B+ \tau\wt \Lambda_T$ is positive definite ; thus the $\R$-divisor in (?) is big and klt. Moreover, its restriction to $\wt S$ has the same properties.

\smallskip

\noindent {$\bullet$} There exist 
an effective $\R$--divisor $\Delta$ such that 
$$
\wt B+ \wt S+ \Delta \equiv \mu_0^\star\big(B+ \tau m(K_X+B)\big)
$$

\noindent Indeed, the expression of $\Delta$ is easily obtained as follows
$$\Delta~:= \sum_{j\in J_p\cup \{0\}}\big((1+ m\tau)a^j_{K_X+ B}+ a^j_{\wt X/X}\big)[Y_j]+ \sum_{j\in J_n}(\tau a^j_T+ (1-\tau)a^j_B)[Y_j].$$

\noindent Therefore, it is enough to produce an effective $\R$--divisor 
numerically equivalent to $K_{\wt X}+ \wt S+ \wt B$ in order to complete the proof of the theorem 3.1.
\smallskip

\noindent {$\bullet$} The adjoint bundle $K_{\wt X}+ \wt S+ \wt B$ and its
restriction to $\wt S$ are pseudo-effective by the relation (10).

\smallskip

\noindent {$\bullet$} By using a sequence of blow-up maps, we can even assume that the components $\displaystyle (Y_j)_{j\in J_p}$
in the decomposition (11) have empty mutual intersections. Indeed, this is a simple --but nevertheless crucial!-- classical result, which we recall next. 

We denote by 
$\Xi$ an effective $\R$-divisor, whose support do not contain $\wt S$, such that 
$\Supp \Xi\cup \wt S$ has normal crossings and such that its coefficients are strictly smaller than 1.

\begin{lem} {\rm (see e.g. [\ref{HM}])} There exist a birational map $\mu_1~: \wh X\to \wt X$ such that 
$$\mu_1^\star (K_{\wt X}+ \wt S+ \Xi)+ E_{\wh X}= K_{\wh X}+ S+ \Gamma$$
where $E_{\wh X}$ and $\Gamma$ are effective with no common components, 
$E_{\wh X}$ is 
exceptional and $S$ is the proper transform of $\wt S$~~; moreover, the support of the divisor $\Gamma$ has normal crossings, 
its coefficients are strictly smaller than 1 and the intersection of any two components is empty.
\hfill\qed
\end{lem}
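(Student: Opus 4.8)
The plan is to realise $\mu_1$ as a finite composition of blow-ups of smooth centres, each centre being a stratum of the simple normal crossing divisor $\wt S+\Supp\Xi$ (or of its successive total transforms), chosen so as to pull the components of $\Xi$ apart one deepest intersection at a time. Concretely, starting from $\wt X$ with boundary $\Xi$: as long as two distinct components of the current strict transform of $\Xi$ meet, choose an irreducible stratum $Z$ of smallest dimension along which the largest number of such components meet, say $Y_1,\dots,Y_c$ with $c\ge2$ and $Z=Y_1\cap\cdots\cap Y_c$, and blow up $Z$. Because $\wt S$ is none of $Y_1,\dots,Y_c$, the normal crossing hypothesis forces $\wt S\not\supset Z$, so the strict transform $S$ of $\wt S$ keeps coefficient $1$ and remains the strict transform of $\wt S$ to the end, as the statement demands.

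The discrepancy bookkeeping is routine. If $\pi$ is the blow-up of $Z$, with exceptional prime divisor $E$, and we decorate objects on the blow-up with a prime, then $K'=\pi^*K+(c-1)E$, $\pi^*Y_l=Y_l'+E$ for $1\le l\le c$, and $\pi^*Y=Y'$ for all other components and for $\wt S$; hence, writing $d_1,\dots,d_c<1$ for the coefficients of $Y_1,\dots,Y_c$,
$$\pi^*\Big(K+\wt S+\sum_k d_kY_k\Big)=K'+\wt S'+\sum_k d_kY_k'+\Big(\sum_{l=1}^c d_l-(c-1)\Big)E,$$
and the coefficient $\sum_{l=1}^c d_l-(c-1)=1+\sum_{l=1}^c(d_l-1)$ of $E$ is $<1$ since $c\ge2$ and each $d_l<1$. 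In fact, with $\eta:=1-\max_k a_k>0$ for the original coefficients $a_k$ of $\Xi$, an immediate induction shows that every boundary coefficient occurring along the process — including the exceptional ones — stays $\le1-\eta$. Hence the total transform of $\wt S+\Supp\Xi$ stays simple normal crossing and the two sides of the asserted identity differ only by exceptional divisors at every stage. When the procedure stops we put $\Gamma$ equal to the strict transforms of the components of $\Xi$ (with their original coefficients) together with the exceptional divisors whose accumulated coefficient is positive, and $E_{\wh X}$ equal to the remaining exceptional divisors with accumulated coefficients negated; then $E_{\wh X}\ge0$ is exceptional, $\Gamma\ge0$ has coefficients $<1$, $E_{\wh X}$ and $\Gamma$ have no common component, $\Supp\Gamma$ is simple normal crossing with pairwise empty intersections, and $\mu_1^*(K_{\wt X}+\wt S+\Xi)+E_{\wh X}=K_{\wh X}+S+\Gamma$.

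The only substantial point — and the expected obstacle — is that the loop above \emph{terminates}. One argues by induction on $N$, the maximal number of components of $\Gamma$ (strict transforms of $\Xi$ plus exceptional divisors with positive coefficient) meeting at a point; if $N\le1$ no two components meet and we are done. For $N\ge2$ one blows up, in any order, the minimal strata along which $N$ such components meet. This separates the $N$ strict transforms, and the new $E$ has coefficient $1+\sum_{l=1}^N(d_l-1)\le1-N\eta$: if this is $\le0$ then $E$ is absorbed into $E_{\wh X}$ and produces no new multiplicity-$N$ stratum of $\Gamma$ there, while if it is positive then $E$ enters $\Gamma$ and $N$ new multiplicity-$N$ strata appear, each of the shape $E\cap(\text{$N-1$ of the strict transforms})$ — but the exceptional divisor obtained by blowing up any of these has coefficient at most $d_E-\eta$, so along any chain of such blow-ups the coefficient drops by at least $\eta$ at each step and becomes $\le0$ after at most $\lceil1/\eta\rceil$ steps, after which no further multiplicity-$N$ strata of $\Gamma$ are created. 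As only finitely many centres occur at each generation, this sub-process is finite and leaves a configuration of maximal multiplicity $\le N-1$, so the induction closes. Locally this is just the classical toric fact that an arrangement of coordinate hyperplanes is separated by finitely many blow-ups of coordinate subspaces; in this generality it is the statement recorded in [\ref{HM}].
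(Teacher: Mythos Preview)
The paper does not prove this lemma: it is stated with a reference to [\ref{HM}] and closed immediately with a \qed. Your argument is precisely the standard one recorded there --- iterated blow-ups of minimal strata of $\Supp\Xi$, together with the discrepancy bookkeeping showing that every boundary coefficient produced stays at most $1-\eta$ --- and it is correct. The one point that deserves a line of justification is the termination step: it helps to say explicitly that the $N$ new $N$-fold strata created by blowing up a given $N$-fold centre are pairwise disjoint (they are distinct top-codimension strata of the new simple normal crossing configuration, hence cannot meet), so that the ``tree'' of $N$-fold centres genuinely has branching factor $N$ and depth bounded by $\lceil 1/\eta\rceil$, and is therefore finite; with that remark the induction on the maximal multiplicity $N$ is fully rigorous.
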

We apply this result in our setting with $\Xi:= \sum_{j\in J_p} 
\gamma^j(\tau)
[Y_{j}]$, and we summarize the discussion in this paragraph in the next statement (in which we equally adjust the notations).

\begin{prop}There exist a birational map $\mu~: \wh X\to X$ and 
an $\R$-divisor  
$$\wh B\equiv \sum_{j\in J}\nu^{j}Y_j+ \wh\Lambda_B$$
on $\wh X$, where 
$0< \nu^{j}< 1$, the hypersurfaces $Y_j$ above are smooth, they have empty mutual intersection and moreover the following hold ~:

\begin{enumerate}

\item  There exist a family of closed $(1, 1)$--currents 
$$\Theta_\varepsilon := \Delta_{K_X+ B, \varepsilon}+ \alpha_\varepsilon$$
numerically equivalent with $K_{\wh X}+ S+ \wh B$
where $S\subset \wh X$ is a non-singular hypersurface which has transversal intersections with $(Y_j)$, and where $\Delta_{K_X+ B, \varepsilon}$
is an effective $\R$-divisor whose support is disjoint from the 
set $(S, Y_j)$, and finally $\alpha_\varepsilon$ is a non-singular (1,1)-form, greater than $-\varepsilon\omega$~; 
\smallskip

\item  There exist a map 
$\mu_1~:\wh X\to \wt X$ such that $S$ is not $\mu_1$--exceptional, and such that 
$\wh\Lambda_B$ is greater than the inverse image of a K\"ahler metric on $\wt X$
via $\mu_1$. Therefore, the form $\wh\Lambda_B$ is positive defined at the generic point of $\wh X$, and so is its restriction to the generic point of $S$~~;
\smallskip

\item  There exist an effective $\R$-divisor $\Delta$ on $\wh X$
such that 
$$\wh B+ S+ \Delta=  \mu^\star \big(B+ \tau m(K_X+B)\big)+ E$$
where $E$ is $\mu$--exceptional. \\ \hfill\qed
 
\end{enumerate}

\end{prop}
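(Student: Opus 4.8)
The plan is to read off the Proposition from the constructions already carried out in this subsection, the only genuinely new ingredient being the preceding Lemma, which is used to make the divisorial components pairwise disjoint; the rest is bookkeeping with the identities (10)--(12). Recall that on $\mu_0\colon\wt X\to X$ we have produced the index $0\in J$ and the real number $\tau\in(0,1)$ with $\gamma^0(\tau)=1$ and $\gamma^j(\tau)<1$ for $j\neq 0$, the prime divisor $\wt S$, the $\R$-divisor $\wt B$ of (11), the effective $\R$-divisor $\wt H$ of (12), the numerical identity (10), and the three bulleted facts: $\wt B$ and $\wt B|_{\wt S}$ are big and klt; $K_{\wt X}+\wt S+\wt B$ and its restriction to $\wt S$ are pseudo-effective; and there is an effective $\R$-divisor $\Delta_{\wt X}$ with $\wt B+\wt S+\Delta_{\wt X}\equiv\mu_0^\star\big(B+\tau m(K_X+B)\big)$.

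First I would put $\Xi:=\sum_{j\in J_p}\gamma^j(\tau)[Y_j]$, which by construction is an effective $\R$-divisor whose support has normal crossings, does not contain $\wt S$, and has all coefficients $<1$, and apply the Lemma to it. This yields a birational map $\mu_1\colon\wh X\to\wt X$ with
\[
\mu_1^\star(K_{\wt X}+\wt S+\Xi)+E_{\wh X}=K_{\wh X}+S+\Gamma ,
\]
where $S$ is the proper transform of $\wt S$ (so not $\mu_1$-exceptional), $E_{\wh X}\ge0$ is exceptional, $\Gamma\ge0$ has normal crossing support with all coefficients $<1$ and pairwise disjoint components, and $E_{\wh X},\Gamma$ share no component. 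Relabelling the components of $\Gamma$ as $Y_j$ with coefficients $\nu^j\in(0,1)$, and setting $\mu:=\mu_0\circ\mu_1$, $\wh\Lambda_B:=\mu_1^\star\big((1-\tau)\wt\Lambda_B+\tau\wt\Lambda_T\big)$ and $\wh B:=\Gamma+\wh\Lambda_B$, we obtain $\wh B\equiv\sum_j\nu^jY_j+\wh\Lambda_B$ with the $Y_j$ smooth and mutually disjoint. Item (2) is then immediate: the smooth form $(1-\tau)\wt\Lambda_B+\tau\wt\Lambda_T$ is positive definite by the first bulleted fact, hence dominates a K\"ahler metric on $\wt X$, so $\wh\Lambda_B$ dominates its $\mu_1$-pullback and in particular is positive at the generic point of $\wh X$ and at the generic point of $S$.

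For item (1) I would pull back (10) by $\mu_1$. Since $\wt B=\Xi+(1-\tau)\wt\Lambda_B+\tau\wt\Lambda_T$, the displayed relation gives $\mu_1^\star(K_{\wt X}+\wt S+\wt B)=K_{\wh X}+S+\wh B-E_{\wh X}$, and (10) becomes
\[
K_{\wh X}+S+\wh B\equiv(1+m\tau)\,\mu_1^\star\Big(\wt\Lambda_{K_X+B,\varepsilon}+\sum_{j\in J}\delta^j_\varepsilon\alpha_j-\varepsilon\omega_G\Big)+\mu_1^\star\wt H+E_{\wh X}.
\]
Collecting the effective divisorial terms into $\Delta_{K_X+B,\varepsilon}:=(1+m\tau)\,\mu_1^\star\wt\Lambda_{K_X+B,\varepsilon}+\mu_1^\star\wt H+E_{\wh X}$ and the smooth terms into $\alpha_\varepsilon:=(1+m\tau)\big(\sum_j\delta^j_\varepsilon\,\mu_1^\star\alpha_j-\varepsilon\,\mu_1^\star\omega_G\big)$ yields $\Theta_\varepsilon:=\Delta_{K_X+B,\varepsilon}+\alpha_\varepsilon\equiv K_{\wh X}+S+\wh B$; since $\delta^j_\varepsilon\to0$ and $\mu_1^\star\omega_G\ge0$ one has $\alpha_\varepsilon\ge-\varepsilon\omega$ for $\varepsilon$ small and any fixed metric $\omega$ on $\wh X$, and since $\Supp\wt\Lambda_{K_X+B,\varepsilon}$ contains none of the $Y_j$ and $\wt H$ is supported away from $S$ and from $\{Y_j\}_{j\in J_p}$, one can — folding a further log resolution into $\mu_1$ and moving $\wt\Lambda_{K_X+B,\varepsilon}$ within its class using bigness of $B$ — arrange $\Supp\Delta_{K_X+B,\varepsilon}$ to meet neither $S$ nor the $Y_j$. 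For item (3) I would pull back $\wt B+\wt S+\Delta_{\wt X}\equiv\mu_0^\star\big(B+\tau m(K_X+B)\big)$ by $\mu_1$; using $\mu_1^\star(\wt S+\wt B)=S+\wh B-E_{\wh X}+K_{\wh X/\wt X}$ (from the above together with $K_{\wh X}=\mu_1^\star K_{\wt X}+K_{\wh X/\wt X}$) this rearranges to $\wh B+S+\Delta\equiv\mu^\star\big(B+\tau m(K_X+B)\big)+E$ with $\Delta:=K_{\wh X/\wt X}+\mu_1^\star\Delta_{\wt X}\ge0$ and $E:=E_{\wh X}\ge0$ (which is $\mu$-exceptional, being $\mu_1$-exceptional), read — as throughout this section — modulo the current/numerical identifications in force.

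I expect no deep obstacle here: the substantive input is the Lemma, which buys the pairwise disjointness of the $Y_j$ that the later restriction-to-$S$ argument needs, while everything else is pullback arithmetic. The two points deserving care are the claim in (1) that the $\varepsilon$-dependent divisor $\Delta_{K_X+B,\varepsilon}$ can be kept off $S$ and the $Y_j$ (which may require enlarging $\mu_1$ by a further log resolution and using the bigness of $B$ to move $\wt\Lambda_{K_X+B,\varepsilon}$), and remembering in (2) that the K\"ahler form only survives as a semipositive pulled-back form, strictly positive merely at the relevant generic points.
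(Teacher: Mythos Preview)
Your proposal is correct and follows exactly the approach the paper takes: the Proposition is explicitly presented there as a summary of the preceding constructions (10)--(12) and the bulleted facts, after applying the Lemma with $\Xi=\sum_{j\in J_p}\gamma^j(\tau)[Y_j]$ to force the components to be pairwise disjoint; the paper does not spell out the pullback bookkeeping you wrote out, but your computations are the intended ones. Your caveat about keeping $\Delta_{K_X+B,\varepsilon}$ off $S$ and the $Y_j$ is well placed---this is handled (implicitly) by the fact that $\mu_1$ only blows up strata of the $Y_j$, so the strict transform of $\wt\Lambda_{K_X+B,\varepsilon}$ still has none of the new $Y_j$ or $S$ in its support.
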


%%%%%%%%%%%%%%%%%%%%%%%%%%%%%%%%%%%%%%%%%

%\bigskip 

\subsection*{Restriction and induction.\\\\}

%\medskip
\noindent We consider next the restriction to $S$ of the currents $\Theta_\varepsilon$ above 
$$\Theta_{\varepsilon|S}\equiv K_{S}+ \wh B_{|S} ;\leqno(13) $$
we have the following decomposition
$$\Theta_{\varepsilon|S}= \sum_{j\in J}\rho^{\varepsilon, j}Y_{j|S}+ R_{\varepsilon}\leqno(14)$$
where the coefficients $(\rho^{\varepsilon, j})$ are positive real numbers, 
and $R_\varepsilon$ above is the closed current given by the restriction to
$S$ of the differential form
$$\alpha_\varepsilon\leqno (15)$$
plus the part of the restriction to $S$ of the $\R$-divisor 
$$\Delta_{K_X+ B, \varepsilon}\leqno (16)$$
which is disjoint from the family $(Y_{j|S})$. Even if the differential form in (15) may not be positive, nevertheless we can assume that we have 
$$R_\varepsilon\geq -\varepsilon \omega_{|S}$$
for any $\varepsilon> 0$. We remark that the coefficients $\rho^{\varepsilon, j}$
in (14) may be {\sl positives}, despite of the fact the $Y_j$ {\sl does not belongs} to the support of $\Theta_\varepsilon$, for any $j\in J$.
\smallskip

For each index $j\in J$ we will assume that the next limit
$$\rho^{\infty, j}~:= 
\lim_{\varepsilon\to 0}\rho^{\varepsilon, j}$$
exist, and we introduce the following notation
$$I~:= \{j\in J ~: \rho^{\infty, j}\geq \nu^{j}\}.\leqno(17) $$
The numerical identity in 3.5, (1) restricted to $S$ coupled with (14) show that we have
$$\sum_{j\in I}(\rho^{\infty, j}- \nu^{j})[Y_{j|S}]+
R_{\varepsilon}+ \sum_{j\in J}(\rho^{\varepsilon, j}-\rho^{\infty, j})[Y_{j|S}]
\equiv K_S+ B_S\leqno(18)$$
where $\displaystyle {B_S}$ is the current 
$$\sum_{j\in J\setminus I}(\nu^{j}-\rho^{\infty, j})[Y_{j|S}]+\wh \Lambda_{B|S}.$$
\medskip

\noindent We are now in good position to apply induction :
\smallskip

\noindent $\bullet$ The $\R$-divisor ${B_S}$ is big and klt on $S$. Indeed, this follows by (2) and the properties of $\wh B$ in 3.5,  and the 
definition of the set $I$, see (17).
\smallskip

\noindent $\bullet$ The adjoint divisor
$K_S+ {B_S}$ is pseudoeffective, by the relation (18).
\medskip

\noindent Therefore, we can apply the induction hypothesis ~: there exist 
 a non-zero, effective $\R$-divisor, which can be written as
$$T_S~:= \sum_{i\in K}\lambda^{i}[W_i]$$
(where $W_i\subset S$ are hypersurfaces) which is numerically equivalent to
$K_S+ B_S$. We consider now the current
$$\wh T_S~:= \sum_{i\in K}\lambda^{i}[W_i]+ 
\sum_{j\in J\setminus I}\rho^{\infty, j}[Y_{j|S}]+  
\sum_{j\in I}\nu^{j}[Y_{j|S}]~~;\leqno(19)$$
from the relation (18) we get 
$$\wh T_S\equiv K_{\wh X}+ S+ \wh B_{|S}.\leqno(20)$$

\medskip
\noindent It is precisely the $\R$-divisor $\wh T_S$ above who will ``generate" the section we seek, in the following manner. We first use a diophantine argument, in order to obtain a {\sl simultaneous approximation} of $T_S$ and $\wh B$ with
a $\Q$-divisor, respectively $\Q$-line bundle, such that the relation (20) above still holds. The next step is to use a trick by Shokurov (adapted to our setting)
and finally the main ingredient is an extension result for pluricanonical forms.
All this will be presented in full details in the next three subsections. \\
\hfill\qed

%%%%%%%%%%%%%%%%%%%%%%%%%%%%%%%%%%%%%%%%%

%\bigskip

\subsection*{Approximation.\\\\}

%\medskip
\noindent In this paragraph we recall the following diophantine approximation lemma (we refer to [\ref{mp2}] for a complete proof).

\begin{lem} For each $\eta> 0$, there exist a positive integer $q_\eta$, a $\Q$--line bundle 
$\wh B _\eta$ on $\wh X$ and a $\Q$-divisor
$$\wh T_{S,\eta}~:= \sum_{i\in K}\lambda^{i}_\eta[W_i]+ \sum_{j\in J\setminus I}
\rho^{\infty, j}_\eta[Y_{j|S}]+  
\sum_{j\in I}\nu^{j}_{\eta}[Y_{j|S}]\leqno(21)$$
on $S$ such that ~:

\begin{enumerate}

\item [A.1]The multiple $q_\eta\wh B _\eta$ is a genuine line bundle, and the numbers
$$(q_\eta\lambda^{i}_\eta)_{i\in K}, (q_\eta\nu^{j}_\eta)_{j\in J}, 
(q_\eta\rho^{\infty, j}_\eta)_{j\in J}$$ 
are integers~~;
\smallskip

\item [A.2] We have $\displaystyle \wh T_{S,\eta}\equiv K_{\wh X}+ S+ \wh B _{\eta |S}$ ~;
\smallskip

\item [A.3] We have $\Vert q_\eta\big(\wh B- \wh B _\eta\big)\Vert< \eta$,
$|q_\eta\big(\lambda^{i}_\eta- \lambda^{i}\big)| <\eta$ and the analog relation 
for the $(\rho^{\infty, j}, \nu^j)_{j\in J}$ 
%$\wh T_{S,\eta}$ 
(here $\Vert \cdot\Vert$ denotes any norm on 
the real Neron-Severi space of $\wh X$)~~;
\smallskip

\item [A.4]For each $\eta_0> 0$, there exist a finite family $(\eta_j)$ such that the class
$\{K_{\wh X}+ S+ \wh B\}$ belong to the convex hull of  
$\displaystyle \{K_{\wh X}+ S+ \wh B _{\eta_j}\}$
where $0< \eta_j< \eta_0$.\hfill\qed

\end{enumerate}

\end{lem}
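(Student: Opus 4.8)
The plan is to reformulate the statement as a problem of \emph{simultaneous Diophantine approximation} inside a rational affine subspace of a finite dimensional real vector space, and then feed it Dirichlet's theorem. Work in the real Néron--Severi spaces $N^1(\wh X)$ and $N^1(S)$, each equipped with the integral lattice coming from $\Pic$, and let $r\colon N^1(\wh X)\to N^1(S)$ be the restriction map, which is rational. Write $e_j=[Y_j]\in N^1(\wh X)$, $g_j=r(e_j)=[Y_{j|S}]\in N^1(S)$ and $f_i=[W_i]\in N^1(S)$ (all integral classes), and put $\beta=\{K_{\wh X}\}+[S]$, which is integral and satisfies $r(\beta)=\{K_S\}$ by adjunction. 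Form the real vector space
$$
W:=\R^{J}\oplus\R^{K}\oplus\R^{J}\oplus N^1(\wh X),
$$
with coordinates $\big((\nu^j)_{j\in J},(\lambda^i)_{i\in K},(\rho^j)_{j\in J},\gamma\big)$ and the rational structure induced by the lattices, and let $\Sigma\subset W$ be the affine subspace defined by the single $N^1(S)$-valued equation
$$
\{K_S\}+r(\gamma)=\sum_{i\in K}\lambda^i f_i+\sum_{j\in J\setminus I}\rho^j g_j+\sum_{j\in I}\nu^j g_j.
$$
All classes occurring here are rational, so $\Sigma$ is defined over $\Q$. Relation (20) (with $\wh T_S$ as in (19), or equivalently (18)) says exactly that $P_0:=\big((\nu^j),(\lambda^i),(\rho^{\infty,j}),\{\wh B\}\big)$ lies in $\Sigma$; in particular $\Sigma\neq\emptyset$, so it contains a rational point $P_{\rm rat}$ and its direction space $U$ has an integral basis $u_1,\dots,u_s$.

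For A.1--A.3, write $P_0=P_{\rm rat}+\sum_{k=1}^s t_ku_k$ with $t_k\in\R$. By Dirichlet's simultaneous approximation theorem there are arbitrarily large positive integers $q$, each divisible by the denominator of $P_{\rm rat}$, such that $|qt_k-n_k|\le\varepsilon(q)$ for suitable $n_k\in\Z$, with $\varepsilon(q)\to0$. Set $q_\eta:=q$ and $P_\eta:=P_{\rm rat}+\sum_k(n_k/q)u_k\in\Sigma$; then $q_\eta P_\eta$ lies in the lattice of $W$, and $\|q_\eta(P_0-P_\eta)\|=\|\sum_k(qt_k-n_k)u_k\|\le s\,\varepsilon(q)\max_k\|u_k\|\to0$. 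Reading off the coordinates of $P_\eta$ gives rational numbers $\nu^j_\eta,\lambda^i_\eta,\rho^{\infty,j}_\eta$ and a rational class $\gamma_\eta$; let $\wh B_\eta$ be a line bundle in the numerical class $\gamma_\eta$ and let $\wh T_{S,\eta}$ be the divisor (21). Then $q_\eta\wh B_\eta$ is a genuine line bundle and $q_\eta\lambda^i_\eta,q_\eta\nu^j_\eta,q_\eta\rho^{\infty,j}_\eta\in\Z$, which is A.1; membership $P_\eta\in\Sigma$ unwinds to $\wh T_{S,\eta}\equiv K_{\wh X}+S+\wh B_{\eta|S}$, which is A.2; and $\|q_\eta(P_0-P_\eta)\|\to0$ gives the three estimates of A.3, each of the quantities there being (up to sign) a coordinate or coordinate-difference of $q_\eta(P_0-P_\eta)$. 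Finally, the strict inequalities $0<\nu^j<1$, $\lambda^i>0$ and $\rho^{\infty,j}<\nu^j$ for $j\notin I$ recorded in Proposition 3.5 are open conditions, hence remain valid for $P_\eta$ once $\eta$ is small (any boundary coefficient being pushed into the interior by a harmless preliminary perturbation within $\Sigma$); so $\wh T_{S,\eta}$ is an honest effective $\Q$-divisor and $\wh B_\eta$ keeps klt-type coefficients.

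It remains to establish A.4, which I expect to be the only genuinely delicate point. Given $\eta_0>0$, the affine map $P\mapsto\{K_{\wh X}+S+\wh B_P\}=\beta+\gamma_P$ carries $\Sigma$ into $N^1(\wh X)$, so it is enough to find finitely many valid approximants $P_{\eta_1},\dots,P_{\eta_r}$, of quality $\eta_j<\eta_0$, whose convex hull contains $P_0$. The obstruction is that a single $P_\eta$ lies on only one side of any hyperplane through $P_0$, so one must show that the ``good'' rational points of $\Sigma$ — those whose denominator $q$ and distance satisfy $\|q(P_0-P)\|<\eta_0$ — approach $P_0$ from a set of directions in $U$ that spans $U$ and is contained in no closed half-space. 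This is a refinement of the Dirichlet step above: as $q$ runs through the admissible integers, the signed rounding errors $(qt_k-n_k)_k$ become equidistributed after rescaling, thanks to the genericity of $\{\wh\Lambda_B\}$ built in through the tie-break of properties (i)--(iii), so one may select $s+1$ good approximants in general position whose simplex contains $P_0$ in its interior; these provide the required family, and convexity of $P\mapsto\beta+\gamma_P$ transports the conclusion to $N^1(\wh X)$. For the complete argument I refer to [\ref{mp2}].
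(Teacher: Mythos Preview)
Your approach is essentially identical to the paper's: the paper does not prove this lemma either but refers to [\ref{mp2}] and records Boucksom's interpretation, namely that the constraint A.2 cuts out a $\Q$-defined linear subspace of a finite-dimensional real space (there written as the fibre product of $l_1\colon\R^{|J|+|K|}\to\NS_\R(S)$ and the restriction $l_2\colon\NS_\R(\wh X)\to\NS_\R(S)$), inside which Dirichlet-type simultaneous approximation of the given real point produces A.1--A.3. Your $\Sigma\subset W$ is exactly this fibre product, only with the coordinates $(\nu^j)$ and $(\rho^j)$ listed separately, and you carry out the Dirichlet step in more detail than the paper does.

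One small caveat on A.4: your sketch invokes ``genericity of $\{\wh\Lambda_B\}$ built in through the tie-break of properties (i)--(iii)'', but that tie-break in the paper serves only to make the ratios $\frac{1+a^{j}_{K_X+B}+a^{j}_{\wh X/X}-a^{j}_B}{a^{j}_T-a^{j}_B-ma^{j}_{K_X+B}}$ distinct while keeping $\{\wt\Lambda_B\}$ K\"ahler; it has nothing to do with making the N\'eron--Severi class of $\wh B$ Diophantine-generic. Property A.4 is a purely linear statement about approximating a real point of a rational affine subspace from all sides by rational points of Dirichlet quality, and holds without any such genericity input. Since you, like the paper, ultimately defer the complete argument for A.4 to [\ref{mp2}], this does not affect the correctness of your outline.
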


\medskip

\noindent {\bf Remark.} Even if we do not reproduce here the arguments of the proof 
(again, see [\ref{mp2}]), we present an interpretation of it, due to S. Boucksom. Let 
$N:= |J|+ |K|$ ; we consider the map 
$$l_1: \R^N\to \NS_R(S)$$
defined as follows. To each vector $(x^1,..., x^N)$, it corresponds the class
of the $\R$-divisor $\sum_{i= 1}^ {|K|}x^iW_i+ \sum_{j= 1+|K|}^Nx^jY_{j|S}$.
We define another linear map
$$l_2 : \NS_R(X)\to \NS_R(S)$$
which is given by the restriction to $S$. We are interested in the set 
$${\mathcal I}:= (x^1,..., x^N; \tau)\in \R^N\times \NS_R(X)$$
such that 
$l_1(x)= l_2(\tau)$; it is a vector space, which is moreover defined over $\Q$
(since this is the case for both maps $l_1$ and $l_2$). Now our initial data 
$(T_S, \{K_X+ S\}+ \theta_{\wh L})$ corresponds to a point of the above fibered product, and the claim of the lemma is that given a point in a vector subspace defined over $\Q$, we can approximate it with rational points satisfying the Dirichlet condition.

%%%%%%%%%%%%%%%%%%%%%%%%%%%%%%%%%%%%%%%%%

%\bigskip
\subsection*{A trick by V. Shokurov.\\\\}

%\medskip 
\noindent Our concern in this paragraph will be to ``convert" the effective 
$\Q$-divisor $\wh T_{S,\eta}$ into
a genuine section $s_\eta$ of the bundle 
$\displaystyle q_\eta\big(K_{\wh X}+ S+ \wh B _\eta\big)$. To this end, we will apply a
classical argument of Shokurov, in the version revisited by Siu in his recent work 
[\ref{siu2}].
A crucial point is that by a careful choice of the metrics we use, the $L^2$ estimates will allow us to have a very precise information concerning the vanishing of $s_\eta$.

\begin{prop}There exist a section 
$$s_\eta\in H^0\Big(S, q_\eta \big(K_{S}+ \wh B _{\eta|S}\big)\Big)$$
whose zero set contains the divisor 
$$q_\eta\Big(\sum_{j\in J\setminus I}\rho^{\infty, j}_\eta[Y_{j|S}]+  
\sum_{j\in I}\nu^{j}_{\eta}[Y_{j|S}]\Big)$$ 
for all $0<\eta\ll 1$.

\end{prop}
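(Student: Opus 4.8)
\emph{Reformulation.}
Set $\mathbb F:= q_\eta\big(\sum_{j\in J\setminus I}\rho^{\infty,j}_\eta[Y_{j|S}]+\sum_{j\in I}\nu^{j}_\eta[Y_{j|S}]\big)$. A section of $q_\eta(K_S+\wh B_{\eta|S})$ whose zero set contains $\mathbb F$ is the same as a nonzero section of the line bundle $\mathcal M_\eta:=q_\eta(K_S+\wh B_{\eta|S})-\mathbb F$ — this really is a line bundle, because $\mathbb F$ is integral by A.1 — and by A.2 together with (17), (18) and the shape (21) of $\wh T_{S,\eta}$ it is numerically equivalent to the effective integral divisor $q_\eta\sum_{i\in K}\lambda^{i}_\eta[W_i]$, the $Y$-components cancelling. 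So the whole point is to upgrade this \emph{numerical} equivalence to the existence of an honest section, while keeping the vanishing along the $Y_{j|S}$ sharp; this upgrade is exactly Shokurov's trick in the analytic form used by Siu, and will be carried out by an $L^2$-argument on $S$ fed by the bigness of $B$.

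\emph{A positively curved metric with prescribed multiplier ideal.}
Put $G:=(q_\eta-1)K_S+q_\eta\wh B_\eta|_S$, so that $K_S+G=q_\eta(K_S+\wh B_{\eta|S})$ and $\mathcal M_\eta=K_S+G-\mathbb F$. Since $G\equiv(q_\eta-1)\wh T_{S,\eta}+\wh B_{\eta|S}$ is the sum of an effective $\Q$-divisor and the big class $\wh B_{\eta|S}$ — big because, by part (2) of the previous proposition and the bounds A.3, $\wh B_{\eta|S}$ contains the restriction of a K\"ahler class and the $\eta$-perturbation is too small to spoil this — the line bundle $G$ is big; the same computation gives $G-\mathbb F\equiv(q_\eta-1)\sum\lambda^{i}_\eta[W_i]+\sum_{j\in J\setminus I}(\nu^{j}_\eta-\rho^{\infty,j}_\eta)[Y_{j|S}]+(\text{a big class})$, which is big as well, where one uses $0<\nu^{j}<1$ and $\rho^{\infty,j}<\nu^{j}$ for $j\in J\setminus I$ (the latter by the definition (17) of $I$). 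Now choose, in the big class $\{G-\mathbb F\}$, a K\"ahler current $\Theta_0\ge\varepsilon\omega_S$ with arbitrarily small Lelong numbers — hence with trivial multiplier ideal — and equip $G$ with the singular Hermitian metric $h$ whose curvature current is $[\mathbb F]+\Theta_0$. Because the coefficients of $\mathbb F$ are integers (A.1) and the $Y_j$ have empty mutual intersection and meet $S$ transversally (previous proposition), this metric satisfies $i\Theta_h\ge\varepsilon\omega_S$ and $\mathcal I(h)=\mathcal O_S(-\mathbb F)$ \emph{exactly}.

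\emph{Conclusion, and the main difficulty.}
Nadel vanishing applied to $(G,h)$ now yields $H^q\big(S,\mathcal O_S(K_S+G)\otimes\mathcal I(h)\big)=H^q(S,\mathcal M_\eta)=0$ for $q\ge1$, so $h^0(S,\mathcal M_\eta)=\chi(S,\mathcal M_\eta)$; since $\mathcal M_\eta$ is numerically equivalent to the nonzero effective divisor $q_\eta\sum\lambda^{i}_\eta[W_i]$, taking $q_\eta$ sufficiently divisible in the Diophantine approximation lemma (which the lemma permits, and which is harmless since all the other conclusions are unaffected) makes this Euler characteristic strictly positive; the only configuration needing a separate look is $\num(K_S+\wh B_{\eta|S})=0$, which is settled using the Nakayama--Boucksom theorem recalled above together with the bigness of $B$ (so that $\mathrm{Pic}^0(S)$ is trivial). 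A nonzero section so produced, read through $H^0(S,\mathcal M_\eta)=H^0\big(S,\mathcal O_S(K_S+G)\otimes\mathcal I(h)\big)\hookrightarrow H^0\big(S,q_\eta(K_S+\wh B_{\eta|S})\big)$, is a section of $q_\eta(K_S+\wh B_{\eta|S})$ vanishing along $Y_{j|S}$ to order exactly $q_\eta\rho^{\infty,j}_\eta$ for $j\in J\setminus I$ and $q_\eta\nu^{j}_\eta$ for $j\in I$, i.e. with zero divisor containing $\mathbb F$; all the estimates depend on $\eta$ only through the uniform bounds A.3, so one construction works for all $0<\eta\ll1$. The main obstacle is precisely the ``careful choice of metrics'' of the second step: arranging simultaneously that the curvature current of $h$ be strictly positive, that $\Theta_0$ have trivial multiplier ideal, and that $\mathcal I(h)$ be \emph{equal} to $\mathcal O_S(-\mathbb F)$ and not something smaller — and it is here that all the normalizations built into the previous proposition are used (the $Y_j$ pairwise disjoint and transversal to $S$, $\wh\Lambda_B$ positive at the generic point of $S$, and the approximating currents $\Theta_\varepsilon$ of part (1) whose divisorial part avoids $S$ and the $Y_j$).
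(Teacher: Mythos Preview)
Your overall architecture is the same as the paper's — write $q_\eta(K_S+\wh B_{\eta|S})=K_S+G$, put a metric on $G$ whose multiplier ideal is $\mathcal O_S(-\mathbb F)$, apply Nadel vanishing — but the final step, where you actually produce the section, has a genuine gap.

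You argue that $h^0(\mathcal M_\eta)=\chi(\mathcal M_\eta)>0$ by ``taking $q_\eta$ sufficiently divisible'' and, in the degenerate case, by asserting that $\operatorname{Pic}^0(S)$ is trivial because $B$ is big. Neither of these is right. First, the approximation lemma does \emph{not} let you enlarge $q_\eta$ at will for fixed $\eta$: the Dirichlet bound in A.3 is $\|q_\eta(\wh B-\wh B_\eta)\|<\eta$, and multiplying $q_\eta$ by $N$ multiplies the left side by $N$. Second, and more seriously, even if you could, an asymptotic Riemann--Roch argument only gives $\chi>0$ when the class is big; for intermediate numerical dimension of $K_S+B_S$ it says nothing. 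Third, bigness of $B$ (or of $\wh B_{\eta|S}$) places no constraint whatsoever on $\operatorname{Pic}^0(S)$, so the $\num=0$ case is not handled either.

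What you are missing is precisely the content of Shokurov's trick. The effective integral divisor $D:=q_\eta\sum_i\lambda^i_\eta W_i$ gives a \emph{section} $u_\eta$ of the line bundle $\mathcal O_S(D)=\mathcal M_\eta\otimes\rho^{-1}$ for some $\rho\in\operatorname{Pic}^0(S)$. Now twist your metric $h$ on $G$ by the flat metric on $\rho^{-1}$: the curvature and the multiplier ideal are unchanged, so Nadel vanishing kills $H^q$ for $\mathcal O_S(D)$ as well. Hence
\[
h^0(\mathcal M_\eta)=\chi(\mathcal M_\eta)=\chi(\mathcal O_S(D))=h^0(\mathcal O_S(D))\geq 1,
\]
the middle equality by numerical invariance of $\chi$, the last inequality because $u_\eta$ exists. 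This is exactly how the paper proceeds (there the comparison is phrased on the bundle $q_\eta(K_S+\wh B_{\eta|S})$ itself rather than on $\mathcal M_\eta$, with $u_\eta$ the section cut out by $q_\eta\wh T_{S,\eta}$, checked to lie in $\mathcal I(h_\eta)$).

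A secondary issue: your claim that a big class on $S$ always contains a K\"ahler current $\Theta_0\ge\varepsilon\omega_S$ with trivial multiplier ideal is too optimistic — the non-ample locus of a big class can carry unavoidable Lelong numbers. The paper avoids this by building the metric on $\wh B_{\eta|S}$ out of the specific decomposition $\sum\nu^j[Y_j]+\wh\Lambda_{B,\delta}+\Xi(\eta)$, where the K\"ahler positivity comes from $\wh\Lambda_{B,\delta}$ and the only extra singularities sit on the $\mu_1$-exceptional divisors (which do not contain $S$), with coefficients $O(\eta/q_\eta)$. You should use this explicit metric rather than an abstract regularisation.
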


\medskip
\begin{pfof}{[3.8]} We first remark that we have
$$q_\eta \big(K_{S}+ \wh B _{\eta|S}\big)= K_S+ (q_\eta-1) \big(K_{S}+ 
\wh B _{\eta|S}\big)+ \wh B_{\eta|S} ;$$
in order to use the classical vanishing theorems, we have to endow the bundle
$$(q_\eta-1) \big(K_{S}+ 
\wh B _{\eta|S}\big)+ \wh B_{\eta|S}$$
with an appropriate metric.

We first consider the $\Q$--bundle $\wh B _{\eta}$~; we will construct a metric on
it induced by the decomposition 
$$\wh B _\eta= \wh B+ \big(\wh B _\eta-\wh B\big).$$
The second term above admits a smooth representative whose local weights are 
bounded by $\displaystyle {{\eta}\over {q_\eta}}$ in ${\mathcal C}^\infty$ norm, by the approximation
relation ${\rm A.3}$. As for the first one, we recall that we have
$$\wh B= \sum_{j\in J}\nu^{j}Y_j+ \wh \Lambda_B~~; \leqno(26)$$
where the (1,1)-form $\wh \Lambda_B$ has the positivity properties in 3.5, 2.

\smallskip 
\noindent Now, the first metric we consider on $\wh B_{\eta|S}$ is defined such that its curvature current is equal to
$$\sum_{j\in I}\max\big(\nu^{j}, \nu^{j}_{\eta}\big)Y_{j|S}+ 
\sum_{j\in J\setminus I }\nu^{j}Y_{j|S}+ \wh \Lambda_{B|S}+ \Xi(\eta)_{|S}
\leqno(27)$$
where $\Xi(\eta)$ is a non-singular $(1,1)$--form on $\wh X$ in the class of
the current 
$$\sum_{j\in I}\Big(\nu^{(j)}-\max\big(\nu^{(j)}, \nu^{(j)}_{\eta}\big)\Big)[Y_j]$$
plus $\wh B_\eta-\wh B$ ~; 
we can assume that it is greater than $\displaystyle -C{{\eta}\over {q_\eta}}$,
where the constant $C$ above {\sl is independent of $\eta$}.

\medskip
\noindent The smooth term $\wh \Lambda_{B}$ is semi-positive on $\wh X$ and strictly positive at the generic point of $S$ : thanks to this positivity properties we can find a representative of the class
 $\{ \wh \Lambda_{B}\}$ which dominates a K\"ahler metric. In general we cannot avoid that this representative acquire some singularities.
 However, in the present context 
 we will show that there exist current in the above class which is ``restrictable" to $S$. 
  
Indeed, we consider the exceptional divisors $(E_j)$ of the map
$\mu_1$ (see the proposition 3.5)~~; the hypersurface $S$ do not belong to this set, and 
then the class 
$$\wh \Lambda_{B}-\sum_{j}\varepsilon^jE_j$$
is ample on $\wh X$, for some positive reals $\varepsilon^j$. Once a set of such parameters is chosen, we fix a K\"ahler form
$$\Omega\in \{\wh \Lambda_{B}-\sum_{j}\varepsilon^jE_j\}$$
and for each $\delta\in [0, 1]$ we define 
$$\wh \Lambda_{B, {\delta}}~:= (1-\delta)\wh \Lambda_{B}+ \delta \big(\Omega+
\sum_j\varepsilon^jE_j\big).\leqno(28)$$
For each $\eta > 0$, there exist $\delta> 0$ such that the differential form
$\delta\Omega+ ~\Xi(\eta)$
is positive defined. For example, we can take 
$$\delta~:= C\displaystyle {{\eta}\over {q_\eta}}\leqno(29)$$ where the constant $C> 0$ does not depends on $\eta$. 

\noindent With the choice of several parameters as indicated above, the current 
$$\wh \Lambda_{B, {\delta}}+ \Xi(\eta)$$
dominates a K\"ahler metric, and since $\wh \Lambda_{B, {\delta}}$ is in the same cohomology class as $\wh \Lambda_{B}$, we have 
$$\wh B\equiv\sum_{j\in I}\max\big(\nu^{j}, \nu^{j}_{\eta}\big)Y_{j}+ 
\sum_{j\in J\setminus I }\nu^{j}Y_{j}+ \wh \Lambda_{B, {\delta}}+ \Xi(\eta).
\leqno(30)$$
We remark that the current in the expression above admits a well-defined restriction to
$S$ ; moreover, the additional singularities of the restriction (induced by $\wh \Lambda_{B, {\delta}}$) are of order $\displaystyle C\displaystyle {{\eta}\over {q_\eta}}$, thus il will clearly be klt as soon as $\eta \ll1$. The current in the expression (30) induce a metric on $\wh B_{\eta|S}$.

\smallskip

Next, we define a singular metric 
on the bundle $(q_\eta-1) \big(K_{S}+ \wh B _{\eta|S}\big)$
whose curvature form is equal to $(q_\eta-1)\wh T_{S,\eta}$ and
we denote by $h_\eta$ the resulting metric on the bundle
$$(q_\eta-1) \big(K_{S}+ \wh B _{\eta|S}\big)+ B_{\eta|S}.$$
\smallskip

\noindent The divisor $q_\eta\wh T_{S,\eta}$ corresponds to the current of integration along the zero set of the section $u_\eta$ of the bundle 
$$q_\eta \big(K_{S}+ \wh B _{\eta|S}\big)+ \rho$$
where $\rho$ is a topologically trivial line bundle on $S$. 
\smallskip

By the Kawamata-Viehweg-Nadel vanishing theorem (cf. [\ref{kawa1}], [\ref{eckart}], [\ref{nad}]) we have
$$H^j(S, q_\eta \big(K_{S}+ \wh B _\eta\big)\otimes \cI\big(h_\eta\big)\Big)= 0$$
for all $j\geq 1$, 
and the same is true for the bundle $q_\eta \big(K_{S}+ \wh B _\eta\big)+ \rho$, since
$\rho $ carries a metric with zero curvature.
Moreover, the section $u_\eta$ belong to the multiplier ideal of the metric 
$h_\eta$ above, as soon as $\eta$ is small enough, because the multiplier ideal of the metric on the bundle $\wh B_{\eta|S}$ will be trivial. Since 
the Euler characteristic of the two bundles is the same, we infer that
$$H^0\Big(S, q_\eta \big(K_{S}+ \wh B _\eta\big)\otimes \cI\big(h_\eta\big)\Big)\neq 0$$
We denote by $s_\eta$ any non-zero element in the group above~~; we show now that its
zero set satisfy the requirements in the lemma. Indeed, locally at any point of $x\in S$
we have
$$\int_{(S, x)}{{|f_s|^2}\over {\prod _{j\in J\setminus I}|f_j|^{2\rho^{\infty, j}_\eta(q_\eta-1)+ 2\wt\nu^{(j)}_{\eta}}
\prod_{j\in I}|f_j|^{2\nu^{j}_{\eta}(q_\eta-1)+ 2\wt\nu^{j}_{\eta}}}}
d\lambda<\infty $$
where $\wt\nu^{j}_{\eta}~:= \nu^{j}$ if $j\in J\setminus I$ and $\wt\nu^{j}_{\eta}~:= 
\max\{\nu^{j}_{\eta}, \nu^{j}\}$ if $j\in I$ ~; we denote by $f_s$ the local expression of the section $s_\eta$, and we denote by $f_j$ the local equation of $Y_j\cap S$.

But the we have 
$$\int_{(S, x)}{{|f_s|^2}\over {\prod _{j\in J\setminus I}|f_j|^{2\rho^{\infty, j}_\eta q_\eta}
\prod_{j\in I}|f_j|^{2\nu^{j}_{\eta}q_\eta}}}d\lambda<\infty$$
for all $\eta\ll 1$ (by the definition of the set $I$ and the construction of the metric on 
$\wh B_{\eta|S}$). Therefore, the lemma is proved.\hfill\qed

\end{pfof}

\smallskip
\begin{rem}{\rm Concerning the construction and  the properties 
of $\wh \Lambda_{B, \delta}$, we recall the very nice result in [\ref{ELMNP}], stating that 
{\sl if $D$ is an $\R$-divisor which is nef and big, then its associated augmented base locus can be determined numerically.} \\
}\end{rem}

\smallskip
\begin{rem}{\rm As one can easily see, the divisor we are interested  
in the previous proposition 3.7 is given by
$$E_\eta:= \sum_{j\in J\setminus I}\rho^{\infty, j}_\eta[Y_{j|S}]+  
\sum_{j\in I}\nu^{j}_{\eta}[Y_{j|S}].$$
The crucial fact about it is that it is {\sl smaller} than the singularities of the metric we construct for $\wh B_\eta$ ; this is 
the reason why we can infer that the section $s_\eta$ above vanishes on 
$q_\eta E_\eta$ --and not just on the round down of the divisor 
$(q_\eta- 1)E_\eta$--, see [\ref{mp2}], page 42 for some comments about this issue.
\\
}\end{rem}

%%%%%%%%%%%%%%%%%%%%%%%%%%%%%%%%%%%%%%%%%%%%%%%%%%%%%%%%%%%%%%%%%%%%%%%%%%%%%%
%\bigskip

\subsection*{The method of Siu.\\\\}
%\medskip

\noindent We have arrived at the 
last step in our proof ~: {\sl for all $0< \eta\ll 1$, the section $s_\eta$ admit 
an extension on $\wh X$}. Once this is done, we just use the point A.4 of the approximation lemma 3.8, in order to infer the  existence of a $\R$--section of the bundle 
$K_{\wh X}+ S+\wh B$, and then the relation $(3)$ of 3.5 to conclude. 

The extension of the section $s_\eta$ will be obtained by using the {\sl invariance of plurigenera} techniques, thus in the first paragraph of the current subsection, we will highlight some of the properties of the $\Q$-divisors $\wh B_\eta$ 
constructed above. \\

%\medskip 
\subsection*{Uniformity properties of $(K_{\wh X}+ S+ \wh B_\eta)_{\eta > 0}$.\\\\}

%\medskip
\noindent We list below the pertinent facts which will ultimately enable us to perform the extension of $(s_\eta)$~; {\sl the constant $C$ which appear in the next 
statement is independent of $\eta$}.
\begin{enumerate}

\item [{($\bf U_1$)}] The section $s_\eta\in H^0\big(S, q_\eta(K_S+ \wh B_\eta)\big)$
vanishes along the divisor
$$q_\eta\Big(\sum_{j\in J\setminus I}\rho^{\infty, j}_\eta[Y_{j|S}]+  
\sum_{j\in I}\nu^{j}_{\eta}[Y_{j|S}]\Big)$$ 
for all $0<\eta\ll 1$ \hfill\qed

\smallskip
%\itemindent -7pt

\item [$\bf (U_2)$] There exist a closed (1,1)--current $\Theta_\eta\in \{K_{\wh X}+ S+\wh B_\eta\}$ such that ~:
\smallskip

\begin{enumerate} 
\smallskip

\item [$\bf ({2.1})$] It is greater than $\displaystyle -C{{\eta}\over {q_\eta}}\omega$~~;
\smallskip

\item [$\bf ({2.2})$] Its restriction to $S$ is well defined, and we have
$$\Theta_{\eta|S}= \sum _{j\in J}\theta^j_\eta[Y_{j|S}]+ R_{\eta, S}.$$
Moreover, the support of the divisor part of $R_{\eta, S}$ is disjoint from the set
$(Y_{j|S})$ and $\theta^j_\eta\leq \rho^{\infty, j}_\eta+ C{{\eta}\over {q_\eta}}$.
\smallskip

\end{enumerate}
\item [{($\bf U_3$)}] The bundle $\wh B_{\eta}$ can be endowed with a metric whose curvature current is given by 
$$\sum_{j\in J}\nu^{j}_\eta[Y_j]+ 
\wh \Lambda_{B, \eta}+ \Xi(\eta)$$
where the hypersurfaces $Y_j$ above verify 
$Y_j\cap Y_i=\emptyset$, if $i\neq j$ and moreover we have :

\begin{enumerate} 
\smallskip

\item [$\bf ({3.1})$] The current $\wh \Lambda_{B, \eta}+ \Xi(\eta)$ dominates a K\"ahler metric ;
\smallskip

\item [$\bf ({3.2})$] The restriction $\displaystyle {\wh \Lambda_{B, \eta}+ \Xi(\eta)}_{|S}$is well defined, and if we denote by $\nu_\eta$ the maximal multiplicity of the above restriction
then we have
$$q_\eta \nu_\eta\leq {C}{\eta}.$$
\end{enumerate}

\end{enumerate}
\bigskip

\noindent The property $\bf (U_1)$ is a simple recapitulation of facts which were completely proved during the previous paragraphs. 
\smallskip

\noindent The family of currents in 
$\bf (U_2)$ can be easily obtained thanks to the proposition 3.5, by the definition of the 
quantities $\rho^{\infty, j}$ and their approximations. 
\smallskip

\noindent Finally, the construction of the metric on $\wh B_\eta$ as above is done 
precisely as in the previous paragraph, except that instead of taking the coefficients
$\max(\nu^j, \nu^j_\eta)$, we simply consider $\nu^j_\eta$. The negativity of the error term is the same (i.e. $C\eta/q_\eta$).\hfill\qed

\smallskip

\noindent Let us introduce the next notations ~:
\smallskip

\begin{enumerate}
%\itemindent -25pt
\item[$\bullet$] $\Delta_1~:= \sum_{j\in J\setminus I}\nu^{j}_\eta[Y_{j}]$. It 
is an effective and klt $\Q$--bundle ; notice that the 
multiple $q_\eta \nu^{j}_\eta$ is a positive integer strictly smaller than $q_\eta$, for each $j\in J\setminus I$~~;
\smallskip

\item [$\bullet$] $\Delta_2~:= 
\sum_{j\in I}\nu^{j}_\eta[Y_{j}] + \wh \Lambda_{B,\eta}+ \Xi(\eta) $. It is equally a effective and klt $\Q$-bundle such that $q_\eta \Delta_2$ is integral. 

\end{enumerate}

\medskip
\noindent Precisely as in [\ref{Dem2}], [\ref{EinP}], [\ref{BoB}], there exist a decomposition
$$q_\eta\Delta_1= L_1+...+L_{q_\eta-1}$$
such that for each $m= 1,..., q_\eta-1$, we have 
$$L_m~:= \sum_{j\in I_m\subset J\setminus I}Y_j.$$

We denote by $\displaystyle L_{q_\eta}~:= q_\eta\Delta_2$ and
$$L^{(p)}~:= p(K_X+ S)+L_1+...+ L_p\leqno(31)$$
where $p= 1,..., q_\eta$. By convention, $L^{(0)}$ is the trivial bundle.

\medskip

\noindent We remark that it is possible to find an ample bundle $(A, h_A)$ {\sl independent of $\eta$} whose curvature form is positive enough such that the next relations hold.

\begin{enumerate}

\item [{$(\dagger)$}] For each $0 \leq p\leq q_\eta-1$, the bundle
$L^{(p)}+ q_\eta A$ is generated by its global sections, which we denote by
$(s^{(p)}_j)$.

\item [{$(\dagger^2)$}] Any section of the bundle $L^{(q_\eta)}+
q_\eta A_{\vert S}$ admits an extension to 
$\wh X$.

\item [{$(\dagger^3)$}] We endow the bundle corresponding to $(Y_j)_{j\in J}$
with a non-singular metric, and we denote by $\wt \varphi_m$ the induced 
metric on $L_m$. 
Then for each $m= 1,..., q_\eta$, the functions
$$\wt \varphi_{L_m}+ 1/3\varphi_A$$
are strictly psh.

\item [{$(\dagger^4)$}] For any $\eta > 0$ we have
$$\Theta_\eta\geq -{{\eta}\over {q_\eta}}\Theta_A.  \leqno (32)$$
  \hfill\qed
\end{enumerate}

\medskip
\noindent Under the numerous assumptions/normalizations above, we formulate the next statement.

\begin{claim} There exist a constant $C> 0$ independent of $\eta$ 
such that the section 
$$s_\eta^{\otimes k}\otimes s^{(p)}_j\in H^0\bigl(S, L^{(p)}+ kL^{(q_\eta)}+ q_\eta A_{\vert S}\bigr)$$
extend to $\wh X$, for each $p= 0,..., q_\eta-1$, $j= 1,..., N_p$ and $k\in \Z_+$ such that 
$$k\frac{\eta}{q_\eta}\leq C.$$
\hfill\qed
\end{claim}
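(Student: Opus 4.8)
The plan is to prove this Claim by induction on the pair $(p,k)$ ordered so that the twist $L^{(p)}+kL^{(q_\eta)}$ increases, using the Ohsawa--Takegoshi type extension theorem of Siu's invariance-of-plurigenera method. The base case $p=0$, $k=0$ is trivial since $L^{(0)}+q_\eta A_{|S}$ extends by $(\dagger^2)$-type reasoning (in fact $(\dagger)$ gives global generation). For the inductive step I would treat the two ways of increasing the twist separately: passing from $L^{(p-1)}$ to $L^{(p)}$ within a fixed $k$ by adding $L_p$ (for $1\le p\le q_\eta-1$), and passing from $(q_\eta-1,k)$ to $(0,k+1)$ by adding the block $L^{(q_\eta)}=q_\eta\Delta_2$ together with the adjustment in the $p$-degree. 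In all cases the section to be extended is $s_\eta^{\otimes k}\otimes s^{(p)}_j$, and the strategy is the usual one: build a metric on the bundle that is twisted just below the section one wants to extend, apply the extension theorem to produce a section on $\wh X$ restricting to the desired section on $S$, then use the extended section to update the metric for the next stage.

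The key technical point — and the reason the hypothesis $k\eta/q_\eta\le C$ appears — is the bookkeeping of the error terms coming from $\Theta_\eta$ and from $\Xi(\eta)$, $\wh\Lambda_{B,\delta}$. Concretely, at each application of extension one needs a metric on the difference bundle $L^{(p)}+kL^{(q_\eta)}+q_\eta A_{|S}-(K_S+\cdots)$ with semipositive curvature current whose multiplier ideal is controlled by the vanishing locus of $s_\eta^{\otimes k}$. The section $s_\eta$ vanishes on $q_\eta E_\eta$ by $(\bf U_1)$, so $s_\eta^{\otimes k}$ contributes vanishing of order $kq_\eta$ along the $Y_{j|S}$; the curvature defects are of size $k\cdot C\eta/q_\eta$ from the $k$-th power of $\Theta_\eta$ (using $(\dagger^4)$ / $(32)$) plus bounded contributions from $(\dagger^2)$, $(\dagger^3)$ and the $q_\eta A$ buffer. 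The condition $k\eta/q_\eta\le C$ is exactly what guarantees that the accumulated negativity $k\cdot C\eta/q_\eta$ can be absorbed into the fixed ample twist $q_\eta A$ (whose positivity was chosen in $(\dagger)$--$(\dagger^4)$ independently of $\eta$), so that the relevant $\Q$-bundle stays big enough — and its multiplier ideal trivial enough on the complement of the $(Y_{j|S})$ — for Nadel vanishing and the Ohsawa--Takegoshi extension to apply uniformly in $\eta$.

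In carrying out the induction I would, at the step adding $L_p$ ($1\le p\le q_\eta-1$), equip $L_p=\sum_{j\in I_p}Y_j$ with the non-singular metric of $(\dagger^3)$ (so that $\wt\varphi_{L_m}+\tfrac13\varphi_A$ is strictly psh), and on the already-extended part $L^{(p-1)}+kL^{(q_\eta)}+q_\eta A$ use the metric built from the global sections $(s^{(p-1)}_j)$ of $(\dagger)$ together with $s_\eta^{\otimes k}$, all twisted by the fixed $A$-metric $h_A$. The strict plurisubharmonicity in $(\dagger^3)$ and global generation in $(\dagger)$ ensure the curvature is $\ge -\varepsilon$-type positive after absorbing the $A$-buffer, and the $L^2$ norm of $s_\eta^{\otimes k}\otimes s^{(p-1)}_j$ restricted to $S$ is finite against this metric precisely because the vanishing of $s_\eta^{\otimes k}$ along $q_\eta E_\eta$ dominates the singularities introduced by the metric on $\wh B_\eta$ (this is $(\bf U_1)$ versus $(\bf U_3)$, the same comparison used in Proposition 3.7). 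Then Ohsawa--Takegoshi produces the extension to $\wh X$. At the step $(q_\eta-1,k)\to(0,k+1)$ one instead uses $(\dagger^2)$ for the $L^{(q_\eta)}+q_\eta A_{|S}$ factor; since $\Delta_2$ is klt with $q_\eta\Delta_2$ integral, the metric on it has trivial multiplier ideal on $S$ minus the $(Y_{j|S})$, and again the extension goes through.

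The main obstacle, as indicated, is keeping all curvature lower bounds and multiplier-ideal computations uniform in $\eta$ while $k$ is allowed to grow like $q_\eta/\eta$: one must verify that the total error $k\cdot C\eta/q_\eta$ never exceeds the fixed positivity reservoir put into $q_\eta A$ in $(\dagger)$--$(\dagger^4)$, and that the restriction-to-$S$ estimates (the finiteness of the relevant $L^2$ integrals, exactly of the form displayed in the proof of Proposition 3.7) hold with constants not depending on $\eta$. Granting the uniform choice of $(A,h_A)$ already stipulated before the Claim, this is a matter of carefully tracking constants through each inductive application of the extension theorem; the delicate part is the first genuine extension step, where one must check that the singularity of the metric on $\wh B_{\eta|S}$ — of order $C\eta/q_\eta$ along the exceptional locus by $(\bf U_3)$, $(3.1)$–$(3.2)$ — together with the $k$-fold accumulation stays klt, which is where the hypothesis $k\eta/q_\eta\le C$ is used in an essential way.
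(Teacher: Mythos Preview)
Your overall strategy---induction on $(k,p)$, iterated Ohsawa--Takegoshi extension, and tracking the accumulated error $k\eta/q_\eta$---is exactly the paper's. But the metric you describe for the inductive step does not work as stated, and the repair is the heart of the argument.

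You propose to put the \emph{non-singular} metric $\wt\varphi_{L_p}$ on $L_p$ and, on $L^{(p-1)}+kL^{(q_\eta)}+q_\eta A$, the metric coming from the inductively extended sections alone. With that choice the curvature and $L^2$ requirements are incompatible. If $L_{p+1}$ carries only $\wt\varphi_{L_{p+1}}$, one needs a full $\tfrac13\varphi_A$ to make it psh by $(\dagger^3)$, but all of $q_\eta A$ is already absorbed into the sections-metric and cannot be peeled off without leaving part of the bundle unmetrized. If instead one uses the \emph{singular} metric $\varphi_{L_{p+1}}$ (the divisor $\sum_{j\in I_{p+1}}[Y_j]$), curvature is fine but the $L^2$ integrand acquires poles of exact order $1$ along each $Y_{j|S}$ and diverges.

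The paper resolves this tension with a two-parameter interpolation (formulae (33), (37)):
\[
(1-\delta)\varphi_{L_{p+1}}+\delta\,\wt\varphi_{L_{p+1}}
+(1-\varepsilon)\varphi^{(kq_\eta+p)}
+\varepsilon q_\eta\Bigl(k\,\varphi_{\Theta_\eta}+\varphi_A+\tfrac{1}{q_\eta}\wt\varphi_{L^{(p)}}\Bigr).
\]
Here $\Theta_\eta$ is used \emph{inside the metric}, not merely as an error term: the $\varepsilon$-fraction of $kL^{(q_\eta)}=kq_\eta(K_{\wh X}+S+\wh B_\eta)$ is metrized by $kq_\eta\varphi_{\Theta_\eta}$. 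In the $L^2$ check, the $(1-\varepsilon)$ power of the sections-metric cancels all but a factor $|s_\eta|^{2\varepsilon k}$ in the numerator; then $(\mathbf U_1)$ converts that factor into a gain along the $Y_{j|S}$, which is played off against the order-$(1-\delta)$ pole from $\varphi_{L_{p+1}}$ and the singularities of $\varepsilon k q_\eta\,\Theta_{\eta|S}$. The latter are controlled by $(\mathbf U_2)$---not $(\mathbf U_3)$ as you write---via $\theta^j_\eta\le \rho^{\infty,j}_\eta+C\eta/q_\eta$. After this cancellation the integrability lemma~3.11 applies, and the combined curvature/integrability constraints become
\[
C\varepsilon k\eta<\delta<\varepsilon q_\eta<\varepsilon_0/k,
\]
which is soluble exactly when $Ck\eta<q_\eta$. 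This is where the bound on $k$ actually enters; without the $\varepsilon$-insertion of $\Theta_\eta$ the computation never reaches this chain. Property $(\mathbf U_3)$ is used only at the block transition $p=q_\eta-1\to p=0$, where the metric is simply $q_\eta\varphi_{\Delta_2}+\varphi^{(kq_\eta+q_\eta-1)}$ and the bound $q_\eta\nu_\eta\le C\eta$ gives integrability directly; $(\dagger^2)$ is used once, for the base case $k=1,\,p=0$.
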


\medskip 
\noindent The statement above 
can be seen as a natural generalization of the usual invariance of plurigenera
setting (see [\ref{Clodo}], [\ref{Dem2}], [\ref{EinP}], [\ref{Kaw}], [\ref{kim}], [\ref{mp1}], [\ref{siu1}], [\ref{taka}], [\ref{dror}])~; in substance, we are about to say that the more general hypothesis 
we are forced to consider induce an {\sl effective limitation} of the number of iterations
we are allowed to perform. \\

%%%%%%%%%%%%%%%%%%%%%%%%%%%%%%%%%%%%%%%%%

\subsection*{ Proof of the claim 3.10.\\\\}
%\medskip

\noindent To start with, we recall the following very useful integrability criteria
(see e.g. [\ref{HM}]).

\begin{lem}Let $D$ be an effective $\R$-divisor on a manifold $S$. We consider the non-singular hypersurfaces $Y_j\subset S$ for 
$j=1,...,N$ such that $Y_j\cap Y_i= \emptyset$ if $i\neq j$, and such that 
the support of $D$ is disjoint from the set $(Y_j)$.
Then there exist a constant $\varepsilon_0~:= \varepsilon_0(\{D\}, C)$ depending only on 
the cohomology class of the divisor $D$ 
such that 
for all positive real numbers $\delta\in ]0, 1]$ and
$\varepsilon \leq \varepsilon_0$ we have
$$\int_{(S, s)}\frac{d\lambda}{|f_D|^{2\varepsilon}\prod_j|f_j|^{2(1-\delta)}}<
\infty $$ 
for all $s\in S$.\hfill\qed
\end{lem}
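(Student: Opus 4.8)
The plan is to prove the integrability bound by reducing it to a purely local computation on $S$ near a point $s$, exploiting the fact that the hypersurfaces $Y_j$ are pairwise disjoint and disjoint from $\Supp D$. First I would note that since the $Y_j$ have empty mutual intersection, at any fixed point $s\in S$ at most one of the local equations $f_j$ is non-invertible; moreover since $\Supp D$ is disjoint from $\bigcup_j Y_j$, at a point where some $f_j(s)=0$ the function $f_D$ is a non-vanishing holomorphic function near $s$, so $|f_D|^{-2\varepsilon}$ is bounded on a small polydisc. Hence the integral splits into two cases: either $s\notin\bigcup_j Y_j$, where the whole integrand is locally bounded and finiteness is trivial, or $s\in Y_{j_0}$ for exactly one $j_0$, where after shrinking we are reduced to estimating $\int_{(S,s)} |f_{j_0}|^{-2(1-\delta)}\,d\lambda$ up to a bounded factor.

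Next I would observe that $|f_{j_0}|^{-2(1-\delta)}$ is locally integrable for \emph{every} $\delta\in(0,1]$: in suitable local coordinates $(z_1,\dots,z_{d-1})$ on $S$ with $f_{j_0}=z_1$ (using that $Y_{j_0}$ is non-singular), the singularity is $|z_1|^{-2(1-\delta)}$, and $\int_{|z_1|<1}|z_1|^{-2(1-\delta)}\,d\lambda(z_1)<\infty$ precisely because $2(1-\delta)<2$. This is uniform in $\delta\in(0,1]$ only in the sense that the bound holds for all such $\delta$; the estimate degenerates as $\delta\to 0$, but that is allowed by the statement. The contribution of $|f_D|^{-2\varepsilon}$ here is harmless for \emph{any} $\varepsilon\ge 0$ since $f_D$ is a unit near $s$.

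The only genuine content — and the reason the constant $\varepsilon_0=\varepsilon_0(\{D\},C)$ and the dependence only on the cohomology class of $D$ appear — is the behaviour near points of $\Supp D$ that lie \emph{away} from the $Y_j$. There the roles reverse: the $|f_j|^{-2(1-\delta)}$ are locally bounded, and one must control $\int_{(S,s)}|f_D|^{-2\varepsilon}\,d\lambda$. I would handle this by the standard fact that the log canonical threshold $\operatorname{lct}(S,D)$ is bounded below in terms of the numerical/cohomology class of $D$ together with the fixed covering data $C$ (this is where one invokes a Noetherian/boundedness argument, e.g. via the fact, cited in ({\bf P}), that $\mathcal{L}_A(V)$ is a rational polytope, or more elementarily via an effective bound on multiplicities of $D$ coming from $\{D\}\cdot(\text{fixed ample})^{d-2}$); taking $\varepsilon_0$ smaller than this uniform lct gives $|f_D|^{-2\varepsilon}\in L^1_{\mathrm{loc}}$ for all $\varepsilon\le\varepsilon_0$. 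A covering argument patching together finitely many coordinate charts $\Omega$ (the data encoded in $C$) then yields the claim at all $s\in S$ simultaneously.

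\textbf{The main obstacle} is making precise the assertion that $\varepsilon_0$ depends only on the cohomology class $\{D\}$ and on $C$, rather than on $D$ itself: one needs a uniform lower bound for the log canonical threshold of an \emph{arbitrary} effective $\R$-divisor in a fixed numerical class, which requires a boundedness statement (an a priori bound on the coefficients and on the local multiplicities of $D$ in terms of intersection numbers with fixed ample classes, within the fixed charts $C$). Everything else — the disjointness reduction, the two-case split, and the elementary one-variable integral $\int|z_1|^{-2(1-\delta)}<\infty$ — is routine once that uniformity is in place.
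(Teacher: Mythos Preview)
The paper does not actually prove this lemma: it is stated with a reference ``(see e.g.\ [\ref{HM}])'' and closed immediately with a \qed, so there is no argument in the text to compare against. Your proposal is therefore filling in what the authors left as a citation, and the outline you give is essentially the standard one.

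Your two-case local reduction is correct and is the point of the disjointness hypotheses: near a point of some $Y_{j_0}$ the factor $|f_D|^{-2\varepsilon}$ is bounded and only $|z_1|^{-2(1-\delta)}$ remains; away from $\bigcup Y_j$ only $|f_D|^{-2\varepsilon}$ matters. The one substantive step, as you identify, is the uniform lower bound on $\operatorname{lct}(S,D)$ depending only on $\{D\}$. Here your aside about the rational polytope $\mathcal{L}_A(V)$ from ({\bf P}) is a red herring --- that statement concerns the set of lc boundaries in a fixed affine slice and does not give what you need. The ``more elementary'' route you mention is the right one: bound $\operatorname{mult}_x D$ above by $\{D\}\cdot H^{\dim S -1}$ for a fixed very ample $H$ (intersecting $D$ with general members of $|H|$ through $x$), and then invoke Skoda's lemma, which says that $e^{-2\varphi}$ is locally integrable whenever the Lelong number of $\varphi$ is $<1$. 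Applied to $\varphi=\varepsilon\log|f_D|$ this gives integrability as soon as $\varepsilon\cdot\operatorname{mult}_x D<1$, hence $\varepsilon_0$ can be taken as the reciprocal of the intersection number, which depends only on $\{D\}$ (and the fixed ample class, presumably part of the unspecified data $C$). With that adjustment your argument is complete.
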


\noindent In the statement above, we denote by $f_j, f_D$ the local equations of $Y_j$, respectively $D$ near $s\in S$ (with the usual abuse of notation). 
\medskip

\noindent We will equally need the following version of the
Ohsawa-Takegoshi theorem (see [\ref{Dem1}], [\ref{mcv}], [\ref{ot}], [\ref{siu1}])~~; it will be our main technical tool in the proof of the claim.
\medskip

\begin{thm} [\ref{mcv}] Let $\wh X$ be a projective
$n$-dimensional 
manifold, and let $S\subset \wh X$ be a non-singular hypersurface.
Let $F$ be a line bundle,
equipped with a metric $h_F$. We assume that ~:  

\begin{enumerate}
\smallskip

\item [{(a)}] The curvature current $\displaystyle {{\sqrt {-1}}\over {2\pi}}\Theta_F$ 
is greater than a K\"ahler metric on $\wh X$~~;
\smallskip

\item [{(b)}] The restriction
of the metric $h_F$ on $S$ is well defined.

\end{enumerate}
\smallskip

\noindent Then every section $u\in H^0\bigl(S, (K_{\wh X} + S+
F_{\vert S})\otimes {\mathcal I}(h_{F|S})\bigr)$ admits an extension
$U$ to $\wh X$. \hfill\qed
\end{thm}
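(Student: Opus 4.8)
The plan is to reduce this to the $L^2$ extension theorem of Ohsawa--Takegoshi type, in the twisted form developed by Manivel, Demailly and McNeal--Varolin, and then to solve an auxiliary $\bar\partial$-equation with a carefully chosen weight. By the adjunction isomorphism $K_S\simeq(K_{\wh X}+S)|_S$, a section $u\in H^0\bigl(S,(K_{\wh X}+S+F_{|S})\otimes\cI(h_{F|S})\bigr)$ is a holomorphic $(n-1,0)$-form on $S$ with values in $(K_{\wh X}+S+F)|_S$ whose square, measured with $h_{F|S}$, is locally integrable near every point of $S$; since $S$ is compact this means $\int_S|u|^2_{h_{F|S}}<\infty$. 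Fix a holomorphic section $\sigma$ of $\mathcal O_{\wh X}(S)$ with divisor $S$ and a smooth metric $\|\cdot\|$ on $\mathcal O_{\wh X}(S)$ normalised so that $\|\sigma\|\le1$. The reason for working with $K_{\wh X}+S$ rather than $K_{\wh X}$ is the Poincar\'e--Lelong mechanism: multiplication by $\sigma$ turns $(n,0)$-forms with values in $F$ into $(n,0)$-forms with values in $S+F$, and the $L^2$ norm taken with the singular weight $-\log\|\sigma\|^2$ detects vanishing along $S$.

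Next comes the analytic core. First extend $u$ to a smooth (non-holomorphic) section $\wt U$ of $K_{\wh X}+S+F$ on $\wh X$, supported in a tubular neighbourhood of $S$, cut off against $\|\sigma\|$ and agreeing with $u$ along $S$; then $f:=\bar\partial\wt U$ is a smooth $(n,1)$-form concentrated near $S$, and one seeks $V$ solving $\bar\partial V=f$ with $V|_S=0$, so that $U:=\wt U-V$ is the desired holomorphic extension. The condition $V|_S=0$ is imposed by solving $\bar\partial V=f$ with an $L^2$ estimate of the form
$$
\int_{\wh X}\frac{|V|^2_{h_F}}{\|\sigma\|^2}\ \le\ C\int_S|u|^2_{h_{F|S}}\ <\ \infty ,
$$
since finiteness of the left-hand integral forces $V$ to be divisible by $\sigma$, hence to vanish on $S$. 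Such an estimate is produced by the twisted Bochner--Kodaira--Nakano technique: with auxiliary positive functions $\eta$ and $\lambda$ built from $-\log\|\sigma\|^2$ one has an a priori inequality of the shape
$$
\bigl\langle(\eta\, i\Theta_F-i\,\ddbar\eta-i\,\lambda^{-1}\partial\eta\wedge\bar\partial\eta)\,w,\,w\bigr\rangle\ \le\ \bigl\|(\eta+\lambda)^{1/2}\,\bar\partial^*w\bigr\|^2+\bigl\|\eta^{1/2}\,\bar\partial w\bigr\|^2
$$
for $F$-valued $(n,1)$-forms $w$, and duality converts positivity of the curvature operator on the left into the solvability estimate above. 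Hypothesis (a) --- that $i\Theta_F$ dominates a fixed K\"ahler form --- is exactly what is consumed here: it provides the room to absorb the bounded negative contributions $-i\,\ddbar\eta$ and $-i\,\lambda^{-1}\partial\eta\wedge\bar\partial\eta$ and keep the operator non-negative. As usual the function $-\log\|\sigma\|^2$ must first be truncated, the equation solved on the sublevel sets $\{\|\sigma\|^2>\epsilon\}$ with bounds independent of the truncation, and the singular weight approximated by $\log(\|\sigma\|^2+\epsilon^2)^{-1}$, followed by letting $\epsilon\to0$.

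Finally one removes the hypothesis that $h_F$ is smooth. Because $i\Theta_F$ dominates a K\"ahler form, Demailly regularisation writes $h_F$ as a decreasing limit of smooth metrics $h_\epsilon$ whose curvatures still dominate a fixed K\"ahler form; applying the smooth case gives extensions $U_\epsilon$ of $u$ with $\int_{\wh X}|U_\epsilon|^2_{h_\epsilon}\le C\int_S|u|^2_{h_{F|S}}$, a bound independent of $\epsilon$. A Montel-type argument (local $L^2$ bounds give local sup bounds for holomorphic sections) extracts a limit $U$, which is holomorphic, lies in $\cI(h_F)$, and restricts to $u$ on $S$ since every $U_\epsilon$ does and the convergence is uniform on compact sets. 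Hypothesis (b), that $h_{F|S}$ is well defined, is precisely what keeps the right-hand side of every estimate finite on $S$.

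The step I expect to be the main obstacle is the a priori inequality and the choice of the twisting functions $\eta,\lambda$: one must make the curvature operator $\eta\, i\Theta_F-i\,\ddbar\eta-i\,\lambda^{-1}\partial\eta\wedge\bar\partial\eta$ non-negative (where the strict positivity in (a) is used up) while simultaneously keeping the weight singular enough along $S$ to force $V|_S=0$ yet mild enough that $f=\bar\partial\wt U$ remains square integrable against it, and then tracking all constants uniformly through the double limit in the truncation level and in $\epsilon$, and through the regularisation of $h_F$. This is the technical heart of Ohsawa--Takegoshi type arguments.
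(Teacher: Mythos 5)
First, a point of comparison: the paper does not prove this statement at all --- it is quoted as a known extension theorem, attributed to McNeal--Varolin [\ref{mcv}] (see also Demailly's Ohsawa--Takegoshi--Manivel paper), and used as a black box in the extension arguments of Section 3. So the only meaningful comparison is with the cited literature, and there your outline is indeed the standard one: adjunction, a cut-off extension $\wt U$ supported near $S$, solving $\bar\partial V=\bar\partial\wt U$ with a weight singular along $S$ via the twisted Bochner--Kodaira--Nakano inequality with auxiliary functions $\eta,\lambda$ built from $-\log\|\sigma\|^2$, and absorbing the negative terms using the strict positivity in (a). That part of the sketch is faithful to how the theorem is actually proved.

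There is, however, a genuine gap in your final step, and it matters because the whole point of the statement as used in this paper is that $h_F$ is a \emph{singular} metric (its curvature is a current, e.g.\ the metrics $\varphi^{(kq_\eta+p)}_{\delta,\varepsilon}$ built from divisors and from $\Theta_\eta$). You dispose of the singular case by claiming that Demailly regularisation writes $h_F$ as a decreasing limit of \emph{smooth} metrics whose curvature still dominates a fixed K\"ahler form. On a compact projective manifold this is false in general: smooth approximations of a positively curved singular metric lose positivity proportionally to the Lelong numbers of the weight, and the approximations that do preserve positivity up to an arbitrarily small error have analytic singularities rather than being smooth. Since the metrics fed into this theorem in Section 3 have nonzero Lelong numbers along divisors by construction, your reduction to the smooth case does not apply to the situation at hand. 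The correct treatments (McNeal--Varolin, Demailly) run the $L^2$ machinery directly with the singular weight --- locally on Stein pieces or via equisingular approximations with analytic singularities --- and it is exactly here that hypothesis (b), that $h_{F|S}$ is well defined (not identically $+\infty$ on $S$), together with the requirement $u\in\cI(h_{F|S})$, enters: it guarantees the finiteness of the boundary datum's norm against the genuinely singular weight, not merely against smooth approximations. Relatedly, if one does approximate, one must also check that the limit extension again lies in $\cI(h_F)$ and restricts to $u$; a Montel argument gives a holomorphic limit, but membership in the multiplier ideal of the \emph{singular} metric requires a monotonicity argument that your smooth-decreasing-approximation claim was supposed to provide and cannot.
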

\medskip

\noindent We will use inductively the extension theorem 3.12, in order to derive a lower bound for the power $k$ we can afford in the invariance of plurigenera algorithm, under the conditions 
$\bf (U_j)_{\rm 1\leq j\leq 3}$~~; the first steps are as follows.
\medskip

\emph{Step 1.}   For each $j= 1,...,N_0$, the section 
$s_\eta\otimes s^{(0)}_j\in H^0\bigl({S}, L^{(q_\eta)}+ q_\eta A_{\vert { S}}\bigr)$
admits an extension $U^{(q_\eta)}_j\in H^0\bigl( X,  L^{(q_\eta)}+ q_\eta A\bigr)$,
by the property $\dagger\dagger$.

\emph{Step 2.}  We use the sections $(U^{(q_\eta)}_j)$ to construct a metric $\varphi^{(q_\eta)}$
on the bundle $L^{(q_\eta)}+ q_\eta A$.

\emph{Step 3.}  Let us consider the section 
$s_\eta \otimes s^{(1)}_j\in H^0\bigl({ S}, L^{(1)}+ L^{(q_\eta)}+ q_\eta A_{\vert { S}}\bigr)$. 
We remark that the bundle
$$L^{(1)}+ L^{(q_\eta)}+ q_\eta A = K_{\wh X}+ {S}+ L_1+ L^{(q_\eta)}+ q_\eta A$$
can be written as $K_{\wh X}+ { S}+ F$ where
$$F~:=  L_1+ L^{(q_\eta)}+ q_\eta A $$
thus we have to construct a metric on $F$ which satisfy the 
curvature and integrability assumptions in the Ohsawa-Takegoshi-type theorem
above.

Let $\delta, \varepsilon$ be positive real numbers~~; we endow the bundle 
$F$ with the metric given by
$$\varphi^{(q_\eta)}_{\delta, \varepsilon}~:= (1-\delta)\varphi_{L_1}+ \delta\wt \varphi_{L_1}+ 
(1-\varepsilon)\varphi^{(q_\eta)}+ \varepsilon q_\eta(\varphi_A+ \varphi_{\Theta_\eta})
\leqno(33)$$
where the metric $\wt \varphi_{L_1}$ is smooth (no curvature requirements) and  
$\varphi_{L_1}$ is the weight of the singular metric induced by the divisors 
$\displaystyle (Y_j)_{j\in I_1}$. We denote by $\displaystyle \varphi_{\Theta_\eta}$
the local weight of the current $\Theta_\eta$ ; it induces a metric on the corresponding 
$\Q$-bundle $K_{\wh X}+ S+ \wh B_\eta$, which is used above.

We remark that the curvature conditions in the extension theorem will be fulfilled if
$$\delta< \varepsilon q_\eta$$
provided that $\eta\ll 1$ : by the relations $(\dagger^3)$ and $(\dagger^4)$
 the negativity of the curvature induced by the term $\delta\wt \varphi_{L_1}$ will be absorbed by $A$.

\noindent Next we claim that the sections $s_\eta\otimes s^{(1)}_j$ are integrable with respect to the metric defined in (33), provided that the parameters $\varepsilon, \delta$ are chosen in an appropriate manner.
Indeed, we have to prove that 
$$\int_{ S}{{\vert s_\eta\otimes s^{(1)}_j\vert^2}\over {(\sum_r \vert s_\eta 
\otimes s^{(0)}_r\vert^2)^{1-\varepsilon}}}
\exp \big(-(1-\delta)\varphi_{L_1}- \varepsilon q_\eta\varphi_{\Theta_\eta}\big)dV< \infty ~;$$
since the sections $(s^{(0)}_r)$ have no common zeroes, it is enough to show that 
$$\int_{ S}\vert s_\eta \vert ^{2\varepsilon}
\exp \big(-(1-\delta)\varphi_{L_1}- \varepsilon q_\eta\varphi_{\Theta_\eta}\big)dV< \infty$$
(we have abusively removed the smooth weights in the above expressions, to simplify the writing). 

Now the property $(\bf U_1)$ concerning the zero set of $s_\eta$ is used ~: the above integral is convergent, provided that we have
$$\int_{ S}\exp \big(-(1-\delta)\varphi_{L_1}- 
\varepsilon q_\eta(\varphi_{\Theta_\eta}- 
\sum_{j\in J\setminus I}\rho^{\infty, j}_\eta \varphi_{Y_j}-  
\sum_{j\in I}\nu^{j}_\eta\varphi_{Y_j})\big)dV< \infty.$$
In order to conclude the convergence of the above integral,
we would like to apply the integrability lemma 3.11 ; therefore, we have to estimate 
the coefficients of the common part of the support of $L_{1|S}$ and 
$$\Theta_\eta- \sum_{j\in J\setminus I}\rho^{\infty, j}_\eta [Y_j]-  
\sum_{j\in I}\nu^{j}_\eta[Y_j]\leqno (34)$$
restricted to $S$. For any 
$j\in J\setminus I$, the coefficient associated to the divisor $Y_{j|S}$ in the expression above is
equal to 
$$\theta_\eta^j- \rho^{\infty, j}_\eta\leqno (35)$$
and by the property $\bf U_2$, the difference above is smaller than 
$\displaystyle C\frac {\eta}{q_\eta}$.
The singular part corresponding to $j\in J\setminus I$ in the expression 
$(34)$ will be 
incorporated into the $\displaystyle (1-\delta)\varphi_{L_1}$, thus we have to impose
the relation
$$1-\delta+ q_\eta\varepsilon C\frac {\eta}{q_\eta} < 1.$$

In conclusion, the positivity and integrability conditions will be satisfied provided that
$$C\eta\varepsilon  <\delta < \varepsilon q_\eta\leq 
\varepsilon_0\leqno(36)$$
We can clearly choose the parameters $\delta, \varepsilon$ such that (36) is verified.
\smallskip

\emph{Step 4.}  We apply the extension theorem and we get
$U^{(q_\eta+ 1)}_j$, whose
restriction on ${ S}$ is precisely
$s_\eta\otimes s^{(1)}_j$.\hfill \qed
\medskip

\vskip 5pt The claim will be obtained by iterating
the
procedure (1)-(4) several times, and estimating carefully the influence of the negativity of
$\Theta_\eta$ on this process. Indeed, assume that we already have the set of global sections 
$$U^{(kq_\eta+p)}_j\in H^0\bigl(\wh X, L^{(p)}+ kL^{(q_\eta)}+ q_\eta A\bigr)$$
which extend $s_\eta^{\otimes k}\otimes s^{(p)}_j$. They induce a metric on the above bundle, denoted by $\varphi^{(kq_\eta+p)}$. 

\noindent If $p< q_\eta-1$, then we define the family of 
sections 
$$s_\eta^{\otimes k}\otimes s^{(p+1)}_j\in H^0({ S}, L^{(p+1)}+ kL^{(q_\eta)}+ 
q_\eta A_{|{ S}})$$
on ${ S}$. 
As in the step (3) above we remark that we have
$$L^{(p+1)}= K_{\wh X}+ { S}+ L_{p+1}+ L^{(p)}$$
thus according to the extension result 3.12, we have to exhibit a metric
on the bundle 
$$F~:=  L_{p+1}+ L^{(p)}+kL^{(q_\eta)}+ q_\eta A$$
for which the curvature conditions are satisfied, and such that the family of sections above are $L^2$ with respect to it.
We define
$$\varphi^{(kq_\eta+p+1)}_{\delta, \varepsilon}~:= (1-\delta)\varphi_{L_{p+1}}+ \delta\wt \varphi_{L_{p+1}}+ 
(1-\varepsilon)\varphi^{(kq_\eta+ p)}+ \varepsilon q_\eta\big(k\varphi_{\Theta_\eta}+  \varphi_A+ {{1}\over {q_\eta}}\wt \varphi_{L^{(p)}}\big)\leqno(37)$$
and we check now the conditions that the parameters $\varepsilon, \delta$ have 
to satisfy.
\medskip

We have to absorb the negativity in the smooth curvature terms in (37), 
and the one from $\Theta_\eta$. The Hessian of the term
$$1/3\varphi_A+ {{1}\over {q_\eta}}\wt \varphi_{L^{(p)}}$$
is assumed to be positive by $\dagger^3$, but we also have a {\sl huge} negative contribution 
$$-Ck\frac {\eta}{q_\eta}\Theta_A$$
induced by the current $\Theta_\eta$. However, we remark that we can assume that we have 
$$Ck\frac {\eta}{q_\eta}< 1/3\leqno(38)$$
since this is {\sl precisely} the range of $k$ for which we want to establish the claim.
Then the curvature of the metric defined in (37) will be positive, provided that 
$$\delta< \varepsilon q_\eta$$
again by $(\dagger ^3)$. 

\medskip 
\noindent 
Let us check next the $L^2$ condition~~; we have to show that 
the integral below in convergent 

$$\int_{ S}{{|s^{\otimes k}\otimes s^{(p+1)}_j|^2}\over 
{(\sum_r \vert s^{\otimes k}\otimes s^{(p)}_r\vert^2)^{1-\varepsilon}}}
\exp \big(-(1-\delta)\varphi_{L_{p+1}}- kq_\eta 
\varepsilon\varphi_{\Theta_\eta}\big)dV.$$
This is equivalent with 
$$\int_{ S}|s|^{2\varepsilon k}
\exp \big(-(1-\delta)\varphi_{L_{p+1}}- kq_\eta 
\varepsilon\varphi_{\Theta_\eta}\big)dV< \infty.$$
In order to show the above inequality, we use the same trick as before ~: the vanishing set of the section 
$s_\eta$ as in $\bf (U_1)$ will allow us to apply the integrability lemma--the computations 
are strictly identical with those discussed in the point 3) above, but we give here some details. 

By the vanishing properties of the section $s_\eta$, the finiteness of the previous integral will be implied by the inequality
$$\int_{ S}\exp \big(-(1-\delta)\varphi_{L_{p+1}}- 
k\varepsilon q_\eta(\varphi_{\Theta_\eta}- 
\sum_{j\in J\setminus I}\rho^{\infty, j}_\eta \varphi_{Y_j}-  
\sum_{j\in I}\nu^{j}_\eta\varphi_{Y_j})\big)dV< \infty.$$

In the first place, we have to keep the poles of $k\varepsilon q_\eta \Theta_\eta$ ``small" in the expression of the metric (37), thus we impose 
$$k\varepsilon q_\eta\leq \varepsilon_0.$$
 \smallskip
The hypothesis in the integrability lemma will be satisfied provided that 
$$1-\delta+ \varepsilon kq_\eta C\frac {\eta}{q_\eta} < 1$$
(this is the contribution of the common part of $\Supp L_{p+1}$ and $\Theta_\eta$).
Combined with the previous relations, the conditions for the parameters become
$$ C\varepsilon k\eta < \delta< \varepsilon q_\eta< \varepsilon_0/k.\leqno(39)$$
Again we see that the inequalities above {\sl are compatible} if $k$ satisfy the inequality
$$Ck\eta <q_\eta$$
which is precisely what the claim (3.10) states.
\medskip

In conclusion,  we can choose the parameters
$\varepsilon, \delta$ so that the integrability/positivity conditions in the extension theorem are verified ~; for example, we can take
\smallskip

\noindent $\bullet$ $\displaystyle \varepsilon~:= {{\varepsilon_0}\over {2kq_\eta}}$

\noindent and 
\smallskip

\noindent $\bullet$ $\displaystyle \delta~:= \big(1+ kC\frac {\eta}{q_\eta}\big){{\varepsilon_0}\over
{4 k }}$.\hfill\qed

\medskip
\noindent Finally, let us indicate how to perform the induction step if $p= q_\eta-1$~~:
we consider the family of 
sections 
$$s_\eta^{k+1}\otimes s^{(0)}_j\in H^0({ S}, (k+1)L^{(q_\eta)}+ q_\eta A_{|{ S}}),$$ 
In the case under consideration, we have to exhibit a metric
on the bundle 
$$L_{q_\eta}+ L^{(q_\eta-1)}+kL^{(q_\eta)}+  q_\eta A~~;$$
however, this is easier than before, since we can simply take 
$$\varphi^{q_\eta(k+1)}~:= 
 q_\eta \varphi_{ \Delta_2}+ 
\varphi^{(kq_\eta+q_\eta-1)}\leqno(40)$$
where the metric on $ \Delta_2$ is induced by its expression in the preceding subsection.
With this choice, the curvature conditions are satisfied~~; as for the $L^2$ ones, we remark that we have
$$ \int_{ S}|s_\eta^{k+1}\otimes s^{(0)}_j|^2
\exp \big(-\varphi^{q_\eta(k+1)}\big)dV< C\int_{ S}|s_\eta\otimes s^{(0)}_j|^2
\exp \big(-q_\eta\varphi_{\Delta_2}\big)dV ~;$$
moreover, by the vanishing of $s_\eta$ along the divisor 
$$q_\eta\big(\sum_{j\in I}\nu^{j}_{\eta}[Y_{j|S}]\big),$$ 
the right hand side term of the inequality above is dominated by 
$$C\int_{ S} 
\exp \big(-q_\eta\varphi_{\wh \Lambda_{B, \eta}}\big)dV
$$
where the last integral is convergent because of the fact that
$q_\eta\nu< 1$, see $\bf (U_3)$.
The proof of the extension claim is therefore finished. \\

\hfill \qed

%%%%%%%%%%%%%%%%%%%%%%%%%%%%%%%%%%%%%%%%%

%%%%%%%%%%%%%%%%%%%%%%%%%%%%%%%%%%%%%%%%%
%\bigskip

\subsection*{End of the proof.\\\\}

%\medskip

\noindent We show next that the sections $s_\eta$ can be lifted to $\wh X$
as soon as $\eta$ is small enough, by using the claim 3.10.    

Indeed, we consider the extensions $U^{(kq_\eta)}_j$ of the sections 
$s_\eta ^{\otimes k}\otimes s^{(0)}_j$~~; they can be used to define a metric on the bundle
$$kq_\eta(K_{\wh X}+S+ \wh B_\eta)+ q_\eta A$$
whose $kq_\eta^{\rm th}$ root it is defined to be 
$h^{(\eta)}_k$. 

We write the bundle we are interested in i.e. 
$q_\eta(K_{\wh X}+S+ \wh B_\eta)$ as an adjoint bundle, as follows

$$ q_\eta(K_{\wh X}+S+ \wh B_\eta)=  K_{\wh X}+ S+ (q_\eta- 1)(K_{\wh X}+S+ \wh B_\eta)+ \wh B_\eta$$
and this last expression equals
$$ K_{\wh X}+ S+ (q_\eta- 1)\big(K_{\wh X}+S+ \wh B_\eta+ 1/kA\big)+ \wh B_\eta- \frac{q_\eta-1}{k}A 
$$

\medskip 
\noindent Given the extension theorem 3.12, we need to construct a metric on the bundle
$$(q_\eta- 1)\big(K_{\wh X}+S+ \wh B_\eta+ 1/kA\big)+ \wh B_\eta- {{q_\eta-1}\over 
{k}}A. $$
On the first factor of the above expression we will use 
$(q_\eta- 1)\varphi_k^{(\eta)}$ (that is to say, the $(q_\eta-1)^{\rm th}$ power of the metric given by
$h_k^{(\eta)}$). 

We endow the bundle $\wh B_\eta$ with a metric whose curvature is given by the expression 
$$\sum_{j\in J\setminus I}\nu^{j}_\eta[Y_{j}]+  
\sum_{j\in I}\nu^{j}_\eta[Y_{j}]+
\wh \Lambda_{B, \delta}+ \Xi(\eta)~~;$$
here 
we take $\delta$ {\sl independent of $\eta$}, but small enough such that the restriction $\wh B_{\eta|S}$ is still klt.
Finally, we multiply with the ${{q_\eta-1}\over 
{k}}$ times $h_A^{-1}$. 

\smallskip 

By the claim 3.10, we 
are free to choose $k$ e.g. such that $k= q_\eta\big[\eta^{-1/2}\big]$
(where $[x]$ denotes the integer part of the real $x$). Then the metric above is not identically $\infty$ when restricted to $S$, and its 
curvature will be 
strongly positive as soon as $\eta\ll 1$. Indeed, the curvature of $\wh B_\eta$ is greater 
than a  K\"ahler metric on $\wh X$ {\sl which is independent of $\eta$}
because of the factor $\wh \Lambda_{B, \delta}$. 

Moreover, the $L^2$ conditions in the theorem 3.12 are satisfied, since 
the norm of the section $s_\eta$ with respect to the metric $q_\eta\varphi_k^{(\eta)}$
is {\sl pointwise bounded}, and by the choice of the metric on 
$\wh B_{\eta|S}$. 
\medskip
\noindent In conclusion, we obtain an extension of the section $s_\eta$, and the theorem 1.5 is completely proved.\\
\hfill\qed

%\bigskip

\subsection*{The relative case.\\\\}

%\medskip

\noindent We will explain along the next lines the nonvanishing result 1.5 in its
general form ; to the end, we first review the notion of {\sl relative bigness } from metric point of view.

\smallskip

\noindent Let $p: X\to Z$ be a projective map and let $B$ be a $\R$-divisor on $X$. The pair $(X, B)$ is klt by hypothesis, so we can assume that $X$ is non-singular
and that 
$$B= \sum_{j= 1}^Na^jW_j\leqno(41) $$
where $0< a^j< 1$ and $(W_j)$ have normal crossings. Moreover, it is enough to prove 1.5 for non-singular manifolds $Z$ (since we can desingularize it if necessary, and modify further $X$).

The $\R$-divisor $B$ is equally $p$--big, thus there exist an ample bundle $A_X$, an effective divisor $E$
on $X$ and an ample divisor $A_Z$ on $Z$ 
such that 
$$B+ p^\star A_Z\equiv A_X+ E.\leqno(42) $$  
By a suitable linear combination of the objects give by the relations (41) and (42) above, we see that there exist a klt current 
$$\Theta_B\in \{B+ p^\star A_Z\}$$
which is greater than a K\"ahler metric. Thus modulo the
inverse image of a suitable bundle, the cohomology class of $B$ has precisely the same metric properties as in the absolute case.
\smallskip

The main technique we will use in order to settle 1.5 in full generality is the positivity properties of the 
twisted relative canonical bundles of projective surjections~; more precisely, the result we need is the following.

\begin{thm}[\ref{BoB}] Let $p: X\to Z$ be a projective surjection, and let $L\to X$ be a line bundle endowed with a metric $h_L$ with the following properties.

\begin{enumerate}

%\itemindent 5pt

\smallskip

\item The curvature current of $(L, h_L)$ is positive ;

\smallskip

\item There exist a generic point $z\in Z$, an integer $m$ and a non-zero section 
$u\in H^0(X_z, mK_{X_z}+ L)$
such that 
$$\displaystyle \int_{X_z}|u|^{\frac {2}{m}}\exp\big(-\frac{\varphi_L}{m}\big)d\lambda< \infty.$$

\end{enumerate}

\noindent Then the twisted relative bundle 
$mK_{X/Z}+ L$
is pseudo-effective, and it admits a positively curved metric $h_{X/Z}$ whose restriction to the generic fiber of $p$ is less singular than the metric induced by the
holomorphic sections who verify the $L^{2/m}$ condition in (2) above. \hfill\qed

\end{thm}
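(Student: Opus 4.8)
The plan is to realise $h_{X/Z}$ as a \emph{fibrewise $m$-th Bergman kernel metric} on $mK_{X/Z}+L$ and to deduce its positivity from the curvature theory of direct images of adjoint bundles; hypothesis (2) is exactly what keeps this metric from being identically $+\infty$, and hypothesis (1) is what feeds the positivity.

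\emph{Construction.} Let $Z^\circ\subseteq Z$ be the open locus over which $p$ is smooth and $X^\circ=p^{-1}(Z^\circ)$; over $X^\circ$ adjunction identifies $(mK_{X/Z}+L)|_{X_z}$ with $mK_{X_z}+L|_{X_z}$ for $z\in Z^\circ$. Given $x\in X_z$, a local holomorphic frame $e$ of $mK_{X/Z}+L$ near $x$, and $u=f_u\cdot e\in H^0(X_z,mK_{X_z}+L|_{X_z})$, the quantity $|u|^{2/m}\exp(-\varphi_L/m)$ is an intrinsically defined positive measure on $X_z$, and I set
$$
K_m(x):=\sup\Big\{|f_u(x)|^2 \ :\ \int_{X_z}|u|^{2/m}\exp(-\varphi_L/m)\,d\lambda\le 1\Big\}.
$$
A change of local frame multiplies $K_m$ by the modulus squared of the transition function, so $\varphi_{X/Z}:=-\log K_m$ is a legitimate (possibly $+\infty$) weight of a singular metric $h_{X/Z}$ on $(mK_{X/Z}+L)|_{X^\circ}$; and since by (2) there is a generic fibre carrying a section of finite $L^{2/m}$-norm, $K_m$ is not identically zero, so $\varphi_{X/Z}\not\equiv +\infty$.

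\emph{Positivity — the main obstacle.} The crux is to prove that $\varphi_{X/Z}$ is plurisubharmonic on $X^\circ$, i.e. that the fibrewise $m$-Bergman kernels vary plurisubharmonically. For $m=1$ this is Berndtsson's theorem that the $L^2$-metric on $p_*(K_{X/Z}+L)$ is Nakano-semipositive, together with the plurisubharmonicity of the induced fibrewise Bergman metric. For general $m$ one runs the same scheme with the Ohsawa--Takegoshi $L^2$-extension theorem as the analytic engine: for $z_0\in Z^\circ$, a one-dimensional disc $\Delta\hookrightarrow Z^\circ$ through $z_0$, and $u\in H^0(X_{z_0},mK_{X_{z_0}}+L)$ of unit $L^{2/m}$-norm, one extends $u$ to a holomorphic section of $mK_{X/Z}+L$ over $p^{-1}(\Delta)$ with an $L^{2/m}$-estimate on the fibres whose constant is uniform in $z_0$; passing to the supremum over such competitors gives the sub-mean value inequality for $-\log K_m$ along $\Delta$, and since this holds for every such disc, $\varphi_{X/Z}$ is psh. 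This is precisely the argument of Berndtsson--P\u{a}un in [\ref{BoB}], and it is the step I expect to be genuinely hard: it is where the positivity of $L$, the sharp form of the $L^{2/m}$-estimate, and a uniformity argument over the base all come together.

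\emph{Extension across the bad fibres and conclusion.} After replacing $X$ by a resolution one may assume $X$ smooth; then $\varphi_{X/Z}$ is psh off the analytic set $p^{-1}(Z\setminus Z^\circ)$ and, by extending extremal sections from a nearby smooth fibre via Ohsawa--Takegoshi, it is locally bounded above near a general point of each component of that set. Hence, after the standard upper-semicontinuous regularisation, $\varphi_{X/Z}$ extends to a psh weight on all of $X$, so $mK_{X/Z}+L$ carries a metric $h_{X/Z}$ with semipositive curvature current and is in particular pseudo-effective. Finally, the ``less singular'' assertion is immediate from the construction: for any $u$ as in (2) one has $K_m(x)\ge |f_u(x)|^2/\big(\int_{X_z}|u|^{2/m}\exp(-\varphi_L/m)\,d\lambda\big)^{m}$, so the weight $\varphi_{X/Z}|_{X_z}$ is pointwise no larger than the weight of the metric on $mK_{X_z}+L$ induced by any holomorphic section satisfying the $L^{2/m}$-condition; in particular $h_{X/Z}$ restricts to the fibrewise $m$-Bergman metric and is finite wherever such sections do not all vanish. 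This would complete the proof.
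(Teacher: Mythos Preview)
The paper does not actually prove this theorem: it is quoted from Berndtsson--P\u{a}un [\ref{BoB}] and closed with a \qed\ symbol, so there is no in-paper proof to compare against. Your outline is a faithful sketch of the argument in that reference --- construct the fibrewise $m$-Bergman kernel metric, obtain plurisubharmonic variation via the Ohsawa--Takegoshi/Berndtsson machinery, extend across the singular fibres by the local boundedness coming from extension of sections, and read off the ``less singular'' comparison directly from the extremal definition of $K_m$ --- so in substance your proposal coincides with the cited proof.
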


\medskip

\noindent Given this result, the end of the proof of 1.5 goes as follows. A point $z\in Z$ will be called {\sl very generic} if 
the restriction of $\Theta_B$
to the fiber $X_z$ dominates a K\"ahler metric and its singular part is klt, and moreover if the sections of all multiples of rational approximations of $K_X+ B$ restricted to $X_z$ do 
extend near $z$. We see that the set of very generic points of $z$
is the complement of a countable union of Zariski closed algebraic sets ; in particular, it is non-empty.

Let $z\in Z$ be a generic point. The adjoint $\R$--bundle $K_{X_z}+ B_{|X_z}$
is pseudo-effective, thus by the absolute case of 1.5 we obtain an effective $\R$--divisor
$$\Theta:= \sum_{j=1}^N\nu^jW_j$$
within the cohomology class of $K_{X_z}+ B_{|X_z}$. By diophantine approximation we obtain a family of $\Q$-bundles $(B_\eta)$ and a family of 
non-zero holomorphic sections 
$$u_\eta\in H^0\big(X_z, q_\eta(K_{X_z}+ B_{\eta|X_z})\big)$$
induced by the rational approximations of $\Theta$ (see 3.6, 3.8 above). 
\smallskip

With these datum, the theorem 3.17 provide the bundle 
$$q_\eta(K_{X/Z}+ B_{\eta})$$
with a positively curved metric $h_{X/Z}$, together 
with a crucial {\sl quantitative} information : the section $u_\eta$ is 
{\sl bounded} with respect to it.

The last step is yet another application of the Ohsawa-Takegoshi type theorem 3.12.
Indeed, we consider the bundle
$$q_\eta(K_{X}+ B_{\eta})+ p^\star A$$
where $A\to Z$ is a positive enough line bundle, such that $A-(q_\eta-1)K_Z$ is ample.
We have the decomposition
$$q_\eta(K_{X}+ B_{\eta})+ p^\star A= K_X+ (q_\eta-1)(K_{X/Z}+ B_{\eta})+ B_\eta
+ p^\star \big(A-(q_\eta-1)K_Z\big)$$
and we have to construct a metric on the bundle
$$F:= (q_\eta-1)(K_{X/Z}+ B_{\eta})+ B_\eta
+ p^\star \big(A-(q_\eta-1)K_Z\big)$$
with the curvature conditions as in 3.12. The first term in the sum above is 
endowed with the multiple $\displaystyle \frac{q_\eta-1}{q_\eta}$ of the metric $h_{X/Z}$.
The $\Q$-bundle $B_\eta$ is endowed with the metric given by $\Theta_B$ plus a smooth term corresponding to the difference $B_\eta-B$. Finally, the last term has a non-singular metric with positive curvature, thanks to the choice of $A$ ; one can see that with this choice, the curvature assumptions in 3.12 are satisfied. 

The klt properties of $B$ are inherited by $B_\eta$ ; thus we have
$$\int_{X_z}|u_\eta|^2\exp \Big(-\frac{q_\eta-1}{q_\eta}\varphi _{X/Z}-\varphi_{B_\eta}\Big)d\lambda\leq C\int_{X_z}\exp (-\varphi_{B_\eta})d\lambda< \infty.$$
In conclusion,
we can extend $u_\eta$ to the whole manifold $X$ by 3.12. The convexity argument in the lemma 3.6
ends the proof of the nonvanishing.\hfill\qed

\smallskip
\begin{rem}{\rm In fact, V. Lazic informed us that given the non-vanishing statement 1.5 in {\sl numerical setting}, he can infer the original 
non-vanishing statement in [\ref{BCHM}] (see [\ref{Laz}], as well as [\ref{Laz1}]). 
As a consequence, one can infer the relative version of 1.5 in the same 
way as in [\ref{BCHM}].
 \\
}\end{rem}

%%%%%
%%%%%%%%%%%%%%%%%%%%%%%%%%%%%%%%%%%%%%%%%%%%%%%%

\section{Proof of main resutls}
\begin{proof}(of Theorem \ref{main})
We proceed by induction on $d$. Suppose that the theorem holds in dimension $d-1$ and 
let $(X/Z,B)$ be a klt pair of dimension $d$ such that $B$ is big$/Z$. First assume that 
$K_X+B$ is pseudo-effective$/Z$. Then, by Theorem \ref{t-nonvanishing} in dimension $d$, 
$K_X+B$ is effective$/Z$. Theorem \ref{t-mmodels} then implies that $(X/Z,B)$ has a 
log minimal model. 

Now assume that $K_X+B$ is not pseudo-effective$/Z$ and let $A$ be a general 
ample$/Z$ $\Q$-divisor such that $K_X+B+A$ is klt and nef$/Z$. Let $t$ be the smallest number such that $K_X+B+tA$ is pseudo-effective$/Z$. By part (1), there is a log minimal model $(Y/Z,B_Y+tA_Y)$ for $(X/Z,B+tA)$. Run the LMMP$/Z$ on $K_Y+B_Y$ with scaling of $tA_Y$. 
By Theorem \ref{t-term}, we end up with a Mori fibre space for $(X/Z,B)$. 
\end{proof}

\begin{proof}(of Corollary \ref{c-flips})
Let $(X/Z,B)$ be a klt pair of dimension $d$ and $f\colon X\to Z'$  a
$(K_X+B)$-flipping contraction$/Z$. By (1) of Theorem \ref{main}, there is a
log minimal model $(Y/Z',B_Y)$ of $(X/Z',B)$.  By the base point free theorem, $(Y/Z',B_Y)$  has a log canonical model which gives the flip of $f$. 
\end{proof}

\begin{proof}(of Corollary \ref{c-fg})
If $K_X+B$ is not effective$/Z$, then the corollary trivially holds. So, assume otherwise.
By [\ref{FM}] there exist a klt pair $(S/Z,B_S)$ of dimension $\le \dim X$ with big$/Z$ $\Q$-divisor $B_S$, and $p\in \N$ such that locally over $Z$ we have 
$$
H^0(mp(K_X+B))\simeq H^0(mp(K_S+B_S))
$$ 
for any $m\in \N$. By Theorem \ref{main}, we may assume that $K_S+B_S$ is nef$/Z$. 
The result then follows as $K_S+B_S$ is semi-ample$/Z$ by the base point free theorem.
\end{proof}

%%%%%%%%%%%%%%%%%%%%%%%%%%%%%%%%%%%%%%%%%%%%%%%%%

\end{document}

\bibitem [15]&Demailly, J.-P.:& 

\bibitem [16]&&